\newcommand{\ldownarrow}{\Big\downarrow}
\newcommand{\precdot}{\prec\mathrel{\mkern-5mu}\mathrel{\cdot}}
\newtheorem{theorem}{Theorem}[section]
\newtheorem{lemma}[theorem]{Lemma}
\newtheorem{corollary}[theorem]{Corollary}
\newtheorem{conj}[theorem]{Conjecture}
\newtheorem{definition}[theorem]{Definition}
\newtheorem{example}[theorem]{Example}
\newtheorem{prop}[theorem]{Proposition}
\newtheorem*{theorem*}{Theorem}
\newtheorem{remark}[theorem]{Remark}
\newcommand{\rn}[1]{{\color{red} #1}}
\begin{document}

\title{On a combinatorial puzzle arising from the theory of Lascoux polynomials}

\author[*]{Kelsey Hanser}

\author[*]{Nicholas Mayers}

\affil[*]{Department of Mathematics, North Carolina State University, Raleigh, NC, 27605}

\maketitle

\bigskip
\begin{abstract} 
\noindent
Lascoux polynomials are a class of nonhomogeneous polynomials which form a basis of the full polynomial ring. Recently, Pan and Yu showed that Lascoux polynomials can be defined as generating polynomials for certain collections of diagrams consisting of unit cells arranged in the first quadrant generated from an associated ``key diagram" by applying sequences of ``$K$-Kohnert moves". Within diagrams generated in this manner, certain cells are designated as special and referred to as ``ghost cells". Given a fixed Lascoux polynomial, Pan and Yu established a combinatorial algorithm in terms of ``snow diagrams" for computing the maximum number of ghost cells occurring in a diagram defining a monomial of the given polynomial; having this value allows for one to determine the total degree of the given Lascoux polynomial. In this paper, we study the combinatorial puzzle which arises when one replaces key diagrams by arbitrary diagrams in the definition of Lascoux polynomials. Specifically, given an arbitrary diagram, we consider the question of determining the maximum number of ghost cells contained within a diagram among those formed from our given initial one by applying sequences of $K$-Kohnert moves. In this regard, we establish means of computing the aforementioned max ghost cell value for various families of diagrams as well as for diagrams in general when one takes a greedy approach.
\end{abstract}


\section{Introduction}

Lascoux polynomials, introduced in \textbf{\cite{Lascoux}}, are a class of nonhomogeneous polynomials, indexed by weak compositions, which form a basis of the full polynomial ring. Such polynomials are the $K$-theoretic analogues of key polynomials and are generalizations of Grothendieck polynomials. Recently, in \textbf{\cite{Pan1}}, it was shown that the monomials of a given Lascoux polynomial encode diagrams belonging to a collection formed by starting from an associated ``key diagram" and applying sequences of what have been called ``$K$-Kohnert moves"; such definitions for families of polynomials in terms of diagrams and certain moves are not new -- originating in the thesis of Kohnert (\textbf{\cite{Kohnert}}, 1990) and investigated further by numerous other authors (\textbf{\cite{KP3,AssafSchu,KP2,KP1,Pan1,Pan2,Winkel2,Winkel1}}). Here, we study a combinatorial puzzle arising from this definition for Lascoux polynomials.

In order to describe the combinatorial puzzle of interest, we first outline the definition for Lascoux polynomials discussed above (complete details can be found in Section~\ref{sec:prelim}). Given a weak composition $\alpha\in\mathbb{Z}^n_{\ge 0}$, we denote the corresponding Lascoux polynomial as $\mathfrak{L}_\alpha$ and associate to $\alpha$ a diagram $\mathbb{D}(\alpha)$ consisting of finitely many cells arranged into the first quadrant. See Figure~\ref{fig:intro} (a) for $\mathbb{D}(\alpha)$ with $\alpha=(0,1,2,2)$. From $\mathbb{D}(\alpha)$ we form a finite collection of diagrams, denoted by $KKD(\mathbb{D}(\alpha))$, consisting of $\mathbb{D}(\alpha)$ along with all those diagrams that can be formed from $\mathbb{D}(\alpha)$ by applying sequences of two types of moves: Kohnert and ghost moves. Briefly, Kohnert moves, when nontrivial, cause the rightmost cell of a given row to descend to the highest empty position below and in the same column. Similarly, ghost moves, when nontrivial, cause the rightmost cell of a given row to descend to the highest empty position below and in the same column, leaving a special ``ghost" cell in its place. The ghost cells introduced by ghost moves place restrictions on the effects of Kohnert and ghost moves. In particular, ghost cells are fixed by both types of move and prevent cells located strictly above and in the same column from moving to positions strictly below. In Figure~\ref{fig:intro} we illustrate: (a) $\mathbb{D}(\alpha)$ for $\alpha=(0,1,2,2)$; (b) the diagram obtained from $\mathbb{D}(\alpha)$ by applying a Kohnert move at row 3; and (c) the diagram obtained from $\mathbb{D}(\alpha)$ by applying a ghost move at row 3, where the ghost cell is decorated by an $\bigtimes$ and the shaded cells are those that are now fixed by both types of moves as a consequence of the ghost cell.

\begin{figure}[H]
    \centering
    $$\scalebox{0.9}{\begin{tikzpicture}[scale=0.6]
  \node at (0.5, 1.5) {$\cdot$};
  \node at (0.5, 2.5) {$\cdot$};
  \node at (0.5, 3.5) {$\cdot$};
  \node at (1.5, 2.5) {$\cdot$};
  \node at (1.5, 3.5) {$\cdot$};
  \draw (0,4.5)--(0,0)--(2.5,0);
  \draw (0,1)--(1,1)--(1,4)--(0,4);
  \draw (0,2)--(1,2);
  \draw (0,3)--(1,3);
  \draw (1,2)--(2,2)--(2,4)--(1,4);
  \draw (1,3)--(2,3);
  \node at (1.25, -1) {\large $(a)$};
\end{tikzpicture}}\quad\quad\quad\quad\quad\quad \scalebox{0.9}{\begin{tikzpicture}[scale=0.6]
  \node at (0.5, 1.5) {$\cdot$};
  \node at (0.5, 2.5) {$\cdot$};
  \node at (0.5, 3.5) {$\cdot$};
  \node at (1.5, 1.5) {$\cdot$};
  \node at (1.5, 3.5) {$\cdot$};
  \draw (0,4.5)--(0,0)--(2.5,0);
  \draw (0,1)--(1,1)--(1,4)--(0,4);
  \draw (0,2)--(1,2);
  \draw (0,3)--(1,3);
  \draw (1,1)--(2,1)--(2,2)--(1,2);
  \draw (1,3)--(2,3)--(2,4)--(1,4);
  \node at (1.25, -1) {\large $(b)$};
\end{tikzpicture}}\quad\quad\quad\quad\quad\quad\scalebox{0.9}{\begin{tikzpicture}[scale=0.6]
  \node at (0.5, 1.5) {$\cdot$};
  \node at (0.5, 2.5) {$\cdot$};
  \node at (0.5, 3.5) {$\cdot$};
  \node at (1.5, 1.5) {$\cdot$};
  \node at (1.5, 3.5) {$\cdot$};
  \node at (1.5, 2.5) {$\bigtimes$};
  \draw (0,4.5)--(0,0)--(2.5,0);
  \draw (0,1)--(1,1)--(1,4)--(0,4);
  \draw (0,2)--(1,2);
  \draw (0,3)--(1,3);
  \draw (1,2)--(2,2)--(2,4)--(1,4);
  \draw (1,3)--(2,3);
  \draw (1,1)--(2,1)--(2,2);
  \filldraw[fill=lightgray,fill opacity=0.4] (0,2) rectangle (1,3);
  \filldraw[fill=lightgray,fill opacity=0.4] (1,3) rectangle (2,4);
  \node at (1.25, -1) {\large $(c)$};
\end{tikzpicture}}$$
    \caption{Diagrams and moves}
    \label{fig:intro}
\end{figure}
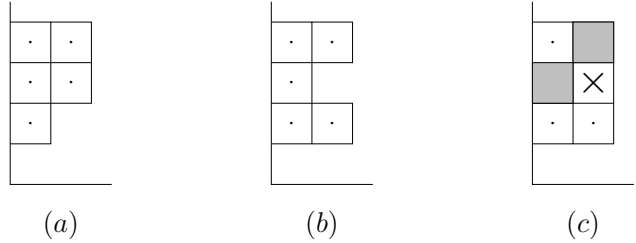

\noindent
Now, it was conjectured in \textbf{\cite{KKohnert}} and, subsequently, proven in \textbf{\cite{Pan1}} that $$\mathfrak{L}_\alpha=\sum_{D\in KKD(\mathbb{D}(\alpha))}\mathrm{wt}(D),$$ where $\mathrm{wt}(D)=(-1)^gx_1^{r_1}x_2^{r_2}\cdots x_n^{r_n}$ with $g$ the total number of ghost cells contained in $D$ and $r_i$ the number of cells, both ghost and non-ghost, contained in row $i$ of $D$. 

With the definition for Lascoux polynomials outlined above in hand, we can now define our combinatorial puzzle of interest. Given an arbitrary diagram $D$, determine a sequence of Kohnert and ghost moves which, when applied to $D$, results in a diagram containing the maximum possible number of ghost cells. Our main objective here is to establish means by which one can determine when the puzzle has been solved; that is, given an arbitrary diagram $D$, we aim to establish means of computing the maximum number of ghost cells contained in a diagram among those formed from $D$ by applying sequences of Kohnert and ghost moves, denoting this value by $\mathrm{MaxG}(D)$. We are not the first to consider this problem. In \textbf{\cite{Pan2}}, Pan and Yu establish a combinatorial algorithm for the computation of $\mathrm{MaxG}(D)$ when $D=\mathbb{D}(\alpha)$ for a weak composition $\alpha$. Their algorithm consists of decorating the diagram $\mathbb{D}(\alpha)$ with ``dark clouds" and ``snowflakes", with the value $\mathrm{MaxG}(\mathbb{D}(\alpha))$ corresponding to the number of snowflakes in the decorated diagram. 

For our contributions, we show that either the algorithm of Pan and Yu given in \textbf{\cite{Pan2}} or a slight modification applies to a larger collection of diagrams, including the ``skew" and special cases of ``lock" diagrams of \textbf{\cite{KP1}}; here, the slight modification is formed by removing certain snowflakes from the decorated diagram of \textbf{\cite{Pan2}}. Included in the study of lock diagrams is an application of a promising approach via labeling for identifying methods of computing $\mathrm{MaxG}(D)$ for families of diagrams $D$. In addition to determining means of computing $\mathrm{MaxG}(D)$ in special cases, we establish an algorithm in the spirit of that found in \textbf{\cite{Pan2}} which applies to computing the value analogous to $\mathrm{MaxG}(D)$ when taking a greedy approach for an arbitrary diagram $D$.

The remainder of the paper is organized as follows. In Section~\ref{sec:prelim}, we cover the necessary preliminaries to define and study our combinatorial puzzle. Following this, in Section~\ref{sec:extJP}, we extend a main result of \textbf{\cite{Pan2}}, showing that either the algorithm introduced in \textbf{\cite{Pan2}} for computing $\mathrm{MaxG}(\mathbb{D}(\alpha))$ for weak compositions $\alpha$ or a slight modification can be applied to various other families of diagrams. Also included in Section~\ref{sec:extJP} is a discussion and application of a promising approach to establishing means for computing $\mathrm{MaxG}(D)$ for families of diagrams $D$. In Section~\ref{sec:greedy}, we determine the limits of taking a greedy approach to the puzzle, applying only ghost moves. Finally, in Section~\ref{sec:epi}, we discuss directions for future research.

\section{Preliminaries}\label{sec:prelim}

In this section, we cover the requesit preliminaries to define our combinatorial puzzle of interest as well as discuss known results. Ongoing, for $n\in\mathbb{N}$, we let $[n]=\{1,2,\hdots,n\}$.

As mentioned in the introduction, we will be interested in applying certain moves to ``diagrams". In this paper, a \textbf{diagram} is an array of finitely many cells in $\mathbb{N}\times\mathbb{N}$, where some of the cells may be decorated with an $\bigtimes$ and called \textbf{ghost cells}. Example diagrams are illustrated in Figure~\ref{fig:diagram} (a) and (b) below. Such decorated diagrams can be defined by the set of row/column coordinates of the cells defining it, where non-ghost cells are denoted by ordered pairs of the form $(r,c)$ and ghost cells by ordered pairs of the form $\langle r,c\rangle$. Consequently, if a diagram $D$ contains a non-ghost (resp., ghost) cell in position $(r,c)$, then we write $(r,c)\in D$ (resp., $\langle r,c\rangle\in D$); otherwise, we write $(r,c)\notin D$ (resp., $\langle r,c\rangle\notin D$).

\begin{example}
    The diagrams $$D_1=\{(1,3),(2,1),(2,2),(3,2)\}\quad\quad\text{and}\quad\quad D_2=\{\langle1,3\rangle,\langle2,1\rangle,(2,2),(3,2)\}$$ are illustrated in Figures~\ref{fig:diagram} \textup{(a)} and \textup{(b)}, respectively.
    
    \begin{figure}[H]
    \centering
    $$\scalebox{0.9}{\begin{tikzpicture}[scale=0.6]
  \node at (0.5, 2.5) {$\cdot$};
  \node at (0.5, 1.5) {$\cdot$};
  \node at (1.5, 1.5) {$\cdot$};
  \node at (2.5, 0.5) {$\cdot$};
  \draw (0,3.5)--(0,0)--(3.5,0);
  \draw (0,3)--(1,3)--(1,2)--(0,2)--(0,3);
  \draw (0,2)--(1,2)--(1,1)--(0,1)--(0,2);
  \draw (1,2)--(2,2)--(2,1)--(1,1)--(1,2);
  \draw (0,1)--(1,1);
  \draw (2,0)--(2,1)--(3,1)--(3,0);
  \node at (1.75, -1) {\large $(a)$};
\end{tikzpicture}}\quad\quad\quad\quad\quad\quad\quad\quad \scalebox{0.9}{\begin{tikzpicture}[scale=0.6]
  \node at (0.5, 2.5) {$\cdot$};
  \node at (0.5, 1.5) {$\bigtimes$};
  \node at (1.5, 1.5) {$\cdot$};
  \node at (2.5, 0.5) {$\bigtimes$};
  \draw (0,3.5)--(0,0)--(3.5,0);
  \draw (0,3)--(1,3)--(1,2)--(0,2)--(0,3);
  \draw (0,2)--(1,2)--(1,1)--(0,1)--(0,2);
  \draw (1,2)--(2,2)--(2,1)--(1,1)--(1,2);
  \draw (0,1)--(1,1);
  \draw (2,0)--(2,1)--(3,1)--(3,0);
  \node at (1.75, -1) {\large $(b)$};
\end{tikzpicture}}$$
    \caption{Diagram}
    \label{fig:diagram}
\end{figure}
\end{example}

To each nonempty row of a diagram we can apply what is called a ``$K$-Kohnert move" defined as follows. Given a diagram $D$ and a nonempty row $r$ of $D$, to apply a $K$-Kohnert move at row $r$ of $D$, we first find $(r,c)$ or $\langle r,c\rangle\in D$ with $c$ maximal, i.e., the rightmost cell in row $r$ of $D$. If 
\begin{itemize}
    \item $\langle r,c\rangle\in D$ is the rightmost cell in row $r$ of $D$, i.e., the rightmost cell in row $r$ of $D$ is a ghost cell,
    \item there exists no $\widehat{r}<r$ such that $(\widehat{r},c)\notin D$, i.e., there are no empty positions below the rightmost cell in row $r$ of $D$, or
    \item there exists $\widehat{r}<r^*<r$ such that $(\widehat{r},c)\notin D$, $\langle r^*,c\rangle\in D$, and $(\tilde{r},c)$ or $\langle\tilde{r},c\rangle\in D$ for $\widehat{r}<\tilde{r}\neq r^*<r$, i.e., there exists a ghost cell between the rightmost cell in row $r$ of $D$ and the highest empty position below,
\end{itemize}
then the $K$-Kohnert move does nothing; otherwise, there are two choices: letting $\widehat{r}<r$ be maximal such that $(\widehat{r},c)\notin D$, either
\begin{itemize}
    \item[(1)] $D$ becomes $(D\backslash (r,c))\cup (\widehat{r},c)$, i.e., the rightmost cell in row $r$ of $D$ moves to the highest empty position below or
    \item[(2)] $D$ becomes $(D\backslash (r,c))\cup \{(\widehat{r},c),\langle r,c\rangle\}$, i.e., the rightmost cell in row $r$ of $D$ moves to the highest empty position below and leaves a ghost cell in its original position.
\end{itemize}
$K$-Kohnert moves of the form (1) are called \textbf{Kohnert moves}, while those of the form (2) are called \textbf{ghost moves}. We denote the diagram formed by applying a Kohnert (resp., ghost) move to a diagram $D$ at row $r$ by $\mathcal{K}(D,r)$ (resp., $\mathcal{G}(D,r)$). To aid in expressing the effect of applying a $K$-Kohnert move, we make use of the following notation. If applying a Kohnert move at row $r$ of $D$ causes the cell in position $(r,c)$ to move down to position $(\widehat{r},c)$, forming the diagram $\widehat{D}$, then we write $$\widehat{D}=\mathcal{K}(D,r)=D\ldownarrow^{(r,c)}_{(\widehat{r},c)}$$
and refer to the cell $(r,c)$ of $D$ as \textbf{movable}. Similarly, if applying a ghost move at row $r$ of $D$ causes the cell in position $(r,c)\in D$ to move down to position $(\widehat{r},c)$ in forming the diagram $\widehat{D}$, then we write $$\widehat{D}=\mathcal{G}(D,r)=D\ldownarrow^{(r,c)}_{(\widehat{r},c)}\cup\{\langle r,c\rangle\}.$$ 

\begin{example}
    Let $D$ be the diagram illustrated in Figure~\ref{fig:diagram} \textup{(a)}. The diagrams $$D_1=\mathcal{K}(D,3)=D\ldownarrow^{(3,1)}_{(1,1)}\quad\quad\text{and}\quad\quad D_2=\mathcal{G}(D,3)=D\ldownarrow^{(3,1)}_{(1,1)}\cup\{\langle 3,1\rangle\}$$ are illustrated in Figures~\ref{fig:moves} \textup{(a)} and \textup{(b)}, respectively. Note that $\mathcal{K}(D,1)=\mathcal{G}(D,1)=D$ and $\mathcal{K}(D_2,3)=\mathcal{G}(D_2,3)=D_2$.
    \begin{figure}[H]
    \centering
    $$\scalebox{0.9}{\begin{tikzpicture}[scale=0.6]
  \node at (0.5, 1.5) {$\cdot$};
  \node at (1.5, 1.5) {$\cdot$};
  \node at (2.5, 0.5) {$\cdot$};
  \node at (0.5, 0.5) {$\cdot$};
  \draw (0,3.5)--(0,0)--(3.5,0);
  \draw (0,2)--(1,2)--(1,1)--(0,1)--(0,2);
  \draw (0,1)--(1,1)--(1,0)--(0,0)--(0,1);
  \draw (1,2)--(2,2)--(2,1)--(1,1)--(1,2);
  \draw (0,1)--(1,1);
  \draw (2,0)--(2,1)--(3,1)--(3,0);
  \node at (2,-1) {(a)};
\end{tikzpicture}\quad\quad\quad\quad\quad\quad\quad\quad\begin{tikzpicture}[scale=0.6]
  \node at (0.5, 2.5) {$\bigtimes$};
  \node at (0.5, 1.5) {$\cdot$};
  \node at (1.5, 1.5) {$\cdot$};
  \node at (2.5, 0.5) {$\cdot$};
  \node at (0.5, 0.5) {$\cdot$};
  \draw (0,3.5)--(0,0)--(3.5,0);
  \draw (0,3)--(1,3)--(1,2)--(0,2)--(0,3);
  \draw (0,2)--(1,2)--(1,1)--(0,1)--(0,2);
  \draw (0,1)--(1,1)--(1,0)--(0,0)--(0,1);
  \draw (1,2)--(2,2)--(2,1)--(1,1)--(1,2);
  \draw (0,1)--(1,1);
  \draw (2,0)--(2,1)--(3,1)--(3,0);
  \node at (2,-1) {(b)};
\end{tikzpicture}}$$
    \caption{$K$-Kohnert moves}
    \label{fig:moves}
\end{figure}
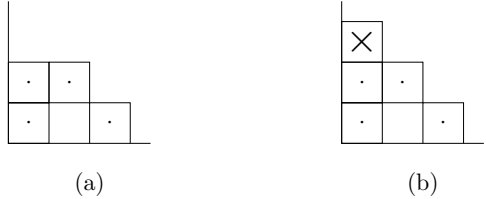
\end{example}

For a diagram $D$, we let $KKD(D)$ (resp., $KD(D)$) denote the collection of diagrams consisting of $D$ along with all those diagrams which can be formed from $D$ by applying sequences of $K$-Kohnert (resp., Kohnert) moves. Moreover, we define $$G(D)=\{\langle r,c\rangle\in D~|~r,c\in\mathbb{N}\},\quad \mathrm{MaxG}(D)=\max\{|G(\tilde{D})|~|~\tilde{D}\in KKD(D)\},$$ and $$R(D)=\{(r,c)\in D~|~(r,\tilde{c}),\langle r,\tilde{c}\rangle \notin D~\text{for}~\tilde{c}>c\};$$ that is, we denote by $G(D)$ the collection of ghost cells in $D$, $\mathrm{MaxG}(D)$ the maximum number of ghost cells contained within a diagram of $KKD(D)$, and $R(D)$ the collection of rightmost cells in $D$.

\begin{remark}\label{rem:Kohnert}
    The notion of Kohnert move was introduced in \textup{(\textbf{\cite{Kohnert}}}, 1990\textup) where A. Kohnert showed that Demazure characters \textup(a.k.a. key polynomials\textup) can be defined as generating polynomials for collections of diagrams of the form $KD(D)$. Moreover, Kohnert conjectured that such a definition could be given for Schubert polynomials as well; this conjecture has since been proven by multiple authors \textup(see \textup{\textbf{\cite{AssafSchu,Winkel2,Winkel1}}}\textup). Motivated by these developments, given a diagram $D$ containing no ghost cells, the authors of \textup{\textbf{\cite{KP1}}} define the Kohnert polynomial of $D$ as $$\mathfrak{K}_D=\sum_{\tilde{D}\in KD(D)}\mathrm{wt}(\tilde{D}),$$ where $\mathrm{wt}(D)=\prod_{r\ge 1}x_r^{|\{c~|~(r,c)\in D\}|}$; such polynomials have been of recent interest \textup{(see \textbf{\cite{KP3,KP2,KP1}})}. In \textup{\textbf{\cite{KKohnert}}}, it was conjectured that, analogous to key polynomials, Lascoux polynomials could be defined as generating polynomials for collections of diagrams of the form $KKD(D)$; this conjecture was established in \textup{\textbf{\cite{Pan1}}} and is discussed below.
\end{remark}

Using the notions defined above, as noted in the introduction, one can naturally define a combinatorial puzzle as follows. Given a diagram $D$, determine a sequence of $K$-Kohnert moves which, when applied to $D$, results in a diagram $T\in KKD(D)$ containing the maximum possible number of ghost cells, i.e., $|G(T)|=\mathrm{MaxG}(D)$. In this paper, we are interested in establishing means by which one can determine if they have solved such puzzle; that is, means of computing $\mathrm{MaxG}(D)$ for an arbitrary diagram $D$. We are not the first to consider this question. In \textbf{\cite{Pan2}}, the authors establish means of computing $\mathrm{MaxG}(D)$ when $D$ is a ``key diagram". Below we outline this result as well as discuss the motivation for the work in \textbf{\cite{Pan2}} which came from the theory of Lascoux polynomials.

Recall from the introduction that one can give a definition for Lascoux polynomials in terms of diagrams and $K$-Kohnert moves. To start, each Lascoux polynomial can be associated with a weak composition $\alpha=(\alpha_1,\hdots,\alpha_n)\in\mathbb{Z}_{\ge 0}^n$; we denote the Lascoux polynomial associated with the weak composition $\alpha$ by $\mathfrak{L}_\alpha$. Then, defining the \textbf{key diagram} associated with $\alpha$ by $$\mathbb{D}(\alpha)=\{(i,j)~|~1\le i\le n,~1\le j\le \alpha_i\}$$ (see Example~\ref{ex:kr}), it is shown in \textbf{\cite{Pan1}} that the monomials of $\mathfrak{L}_\alpha$ encode the diagrams contained in $KKD(\mathbb{D}(\alpha))$. In particular, encoding a diagram $D$ as the monomial $$\mathrm{wt}(D)=\prod_{r\ge 1}(-1)^{|G(D)|}x_r^{|\{c~|~(r,c)~\text{or}~\langle r,c\rangle\in D\}|},$$ we have the following.

\begin{theorem}[Theorem 2, \textup{\textbf{\cite{Pan1}}}]\label{thm:Lascoux}
    For a weak composition $\alpha$, $$\mathfrak{L}_\alpha=\sum_{D\in KKD(\mathbb{D}(\alpha))}\mathrm{wt}(D).$$
\end{theorem}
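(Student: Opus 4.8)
The plan is to prove the identity by induction on the ``non-dominance'' of $\alpha$, matching both sides against the Demazure-operator recursion that defines $\mathfrak{L}_\alpha$. Recall that the Lascoux polynomials are generated from dominant monomials by the $K$-theoretic isobaric divided-difference (Demazure) operators $\overline{\pi}_i$: one has $\mathfrak{L}_\lambda = x_1^{\lambda_1}\cdots x_n^{\lambda_n}$ whenever $\lambda$ is weakly decreasing (dominant), and $\mathfrak{L}_{s_i\alpha} = \overline{\pi}_i\,\mathfrak{L}_\alpha$ whenever $\alpha_i > \alpha_{i+1}$, where $\overline{\pi}_i = \pi_i - \pi_i(x_{i+1}\,\cdot)$ is the $K$-analogue of the isobaric operator $\pi_i$. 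Writing $F_\alpha := \sum_{D\in KKD(\mathbb{D}(\alpha))}\mathrm{wt}(D)$ for the right-hand side, it suffices to show that $F$ satisfies this same base case and the same recursion, since the operators $\overline{\pi}_i$ generate every weak composition from a dominant one.

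\textbf{Base case.} First I would check that for dominant $\lambda$ the collection $KKD(\mathbb{D}(\lambda))$ is trivial, i.e. equals $\{\mathbb{D}(\lambda)\}$. Indeed, for dominant $\lambda$ the cells of $\mathbb{D}(\lambda)$ are left-justified with weakly decreasing row lengths from the bottom, so for each row $i$ every position $(\widehat{r},\lambda_i)$ with $\widehat{r}<i$ is occupied (as $\lambda_{\widehat{r}}\ge\lambda_i$); thus the rightmost cell of row $i$ has no empty landing position below it, and by the second bulleted case in the definition every $K$-Kohnert move acts trivially. Consequently $F_\lambda = \mathrm{wt}(\mathbb{D}(\lambda)) = x_1^{\lambda_1}\cdots x_n^{\lambda_n} = \mathfrak{L}_\lambda$.

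\textbf{Inductive step (operator compatibility).} The heart of the proof is to show $F_{s_i\alpha} = \overline{\pi}_i F_\alpha$ when $\alpha_i > \alpha_{i+1}$. Since $\overline{\pi}_i$ involves only $x_i,x_{i+1}$ and fixes terms already symmetric in them, I would partition $KKD(\mathbb{D}(\alpha))$ into \emph{strings} indexed by the configuration in all rows other than $i$ and $i+1$, together with the multiset of columns whose cells can oscillate between rows $i$ and $i+1$. The goal is a column-local analysis: within each such column, the admissible interactions of rows $i$ and $i+1$ under Kohnert and ghost moves should produce exactly the monomials in $x_i,x_{i+1}$, with the correct signs, that $\overline{\pi}_i$ outputs from the matching monomial of $F_\alpha$. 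Here the ordinary Kohnert moves are intended to reproduce the isobaric part $\pi_i$, while a ghost move, contributing a factor $(-1)$ and a blocking marker, should reproduce the $K$-theoretic correction $-\pi_i(x_{i+1}\,\cdot)$. Establishing this amounts to a weight- and sign-preserving bijection between each string and the support of $\overline{\pi}_i$ applied to its image, verified by tracking how the rightmost-cell, highest-empty-position, and ghost-blocking rules (the three bulleted cases defining a $K$-Kohnert move) constrain the configurations available in rows $i$ and $i+1$.

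\textbf{Main obstacle.} I expect the decisive difficulty to be precisely this operator-compatibility step, and within it the ghost cells. Two features make it delicate. First, ghost moves introduce signs, so the required identity is a \emph{signed} correspondence rather than a plain bijection of monomials; one must show the over-counted configurations cancel, presumably via a sign-reversing involution on the diagrams that do not survive in $\overline{\pi}_i F_\alpha$. Second, a ghost cell both blocks lower cells in its column and is itself immovable, so the string structure in rows $i,i+1$ is not clean: a ghost placed in one of these rows couples the dynamics of cells in other rows, breaking the naive column-by-column independence. Showing that this blocking interacts with the swap $\alpha\leftrightarrow s_i\alpha$ in a manner compatible with $\overline{\pi}_i$ is where the real work lies. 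As an alternative that sidesteps much of the operator bookkeeping, if the direct recursion proves intractable I would instead construct an explicit weight- and sign-preserving bijection from $KKD(\mathbb{D}(\alpha))$ onto a known tableau model for $\mathfrak{L}_\alpha$ (for instance set-valued skyline/Lascoux tableaux), under which ghost cells correspond to the ``extra'' set-valued entries responsible for the $(-1)^g$ signs and row occupancies correspond to the exponent vector; the theorem would then follow from the established tableau generating-function formula.
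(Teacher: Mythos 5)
You have proposed a proof for a result that this paper does not prove at all: Theorem~\ref{thm:Lascoux} is imported verbatim as Theorem~2 of \textbf{\cite{Pan1}}, so the comparison must be against Pan and Yu's argument. Your setup is unobjectionable — the base case is correct (for weakly decreasing $\lambda$ every position below the rightmost cell of each row of $\mathbb{D}(\lambda)$ is occupied, so $KKD(\mathbb{D}(\lambda))=\{\mathbb{D}(\lambda)\}$), and it is true that verifying $F_{s_i\alpha}=\overline{\pi}_i F_\alpha$ for $\alpha_i>\alpha_{i+1}$ would suffice. But that verification is the entire content of the theorem, and your write-up never carries it out: the ``inductive step'' consists of a description of what a proof would need (a string decomposition, a column-local analysis, a sign-reversing involution) followed by an accurate list of reasons the plan is in jeopardy. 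That is a genuine gap, not a deferred routine check. Worse, the specific plan is unlikely to close: $K$-Kohnert moves are not confined to rows $i$ and $i+1$ — a move at a row far above can drop a cell through both rows, and a ghost cell created in row $i$ or $i+1$ permanently blocks such transits — so the fibers over ``the configuration in all other rows'' are not stable under the moves, and the column-by-column independence you want is false from the start. Even in the ghost-free case no short operator-compatibility argument of this kind is known: Kohnert's rule for key polynomials (the $\pi_i$ analogue of exactly this step) resisted proof for years, which is strong evidence that the step you label ``the heart of the proof'' cannot be discharged by the bookkeeping you sketch.

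Your one-sentence fallback is, in fact, the actual route taken in \textbf{\cite{Pan1}}: Pan and Yu prove the theorem by constructing an explicit weight-preserving bijection between the diagrams of $KKD(\mathbb{D}(\alpha))$ and reverse set-valued tableaux, under which the ghost cells account for the extra entries producing the sign $(-1)^{|G(D)|}$, and then invoke the known set-valued-tableau generating-function formula for $\mathfrak{L}_\alpha$. So you correctly identified where a proof lives, but since your proposal contains neither the bijection nor any mechanism for the operator recursion, it establishes the theorem under neither of its two announced strategies.
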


\begin{example}\label{ex:kr}
    Let $\alpha=(0,3,4,2,3)$. The diagram $D_1=\mathbb{D}(\alpha)$ is illustrated in Figure~\ref{fig:kr} $(a)$. This diagram contributes the term $\mathrm{wt}(D_1)=x_2^3x_3^4x_5^2x_6^3$ to $\mathfrak{L}_\alpha$, while the diagram $D_2\in KKD(D_1)$ illustrated in Figure~\ref{fig:kr} $(b)$ contributes the term $\mathrm{wt}(D_2)=(-1)^6x_1^4x_2^4x_3^4x_5^3x_6^3$. As we show below, $D_2$ contains the maximum possible number of ghost cells among diagrams contained in $KKD(D_1)$, i.e., $|G(D_2)|=6=\mathrm{MaxG}(D)$. As a consequence, the monomial corresponding to $D_2$ in $\mathfrak{L}_\alpha$ has the highest possible total degree among monomials of $\mathfrak{L}_\alpha$.
    \begin{figure}[H]
        \centering
        $$\scalebox{0.9}{\begin{tikzpicture}[scale=0.6]
        \node at (0.5, 1.5) {$\cdot$};
        \node at (1.5, 1.5) {$\cdot$};
        \node at (2.5, 1.5) {$\cdot$};
  \node at (0.5, 2.5) {$\cdot$};
  \node at (1.5, 2.5) {$\cdot$};
  \node at (2.5, 2.5) {$\cdot$};
  \node at (3.5, 2.5) {$\cdot$};
  \node at (0.5, 3.5) {$\cdot$};
  \node at (1.5, 3.5) {$\cdot$};
  \node at (0.5, 4.5) {$\cdot$};
  \node at (1.5, 4.5) {$\cdot$};
  \node at (2.5, 4.5) {$\cdot$};
  \draw (0,5.5)--(0,0)--(4.5,0);
  \draw (0,1)--(3,1)--(3,2)--(0,2);
  \draw (1,1)--(1,5);
  \draw (2,1)--(2,2);
  \draw (0,3)--(1,3);
  \draw (0,4)--(2,4)--(2,3)--(1,3);
  \draw (0,5)--(3,5)--(3,4)--(2,4);
  \draw (2,4)--(2,5);
  \draw (2,3)--(4,3)--(4,2)--(3,2);
  \draw (2,2)--(2,3);
  \draw (3,2)--(3,3);
  \node at (2, -1) {\large $(a)$};
\end{tikzpicture}}\quad\quad\quad\quad\quad\quad\quad\quad\scalebox{0.9}{\begin{tikzpicture}[scale=0.6]
        \node at (0.5, 1.5) {$\bigtimes$};
        \node at (1.5, 1.5) {$\cdot$};
        \node at (2.5, 1.5) {$\cdot$};
        \node at (3.5, 1.5) {$\bigtimes$};
  \node at (0.5, 2.5) {$\cdot$};
  \node at (1.5, 2.5) {$\cdot$};
  \node at (2.5, 2.5) {$\cdot$};
  \node at (3.5, 2.5) {$\bigtimes$};
  \node at (0.5, 3.5) {$\cdot$};
  \node at (1.5, 3.5) {$\bigtimes$};
  \node at (2.5, 3.5) {$\bigtimes$};
  \node at (0.5, 4.5) {$\cdot$};
  \node at (1.5, 4.5) {$\cdot$};
  \node at (2.5, 4.5) {$\bigtimes$};
  \node at (0.5, 0.5) {$\cdot$};
  \node at (1.5, 0.5) {$\cdot$};
  \node at (2.5, 0.5) {$\cdot$};
  \node at (3.5, 0.5) {$\cdot$};
  \draw (0,5.5)--(0,0)--(4.5,0);
  \draw (0,1)--(3,1)--(3,2)--(0,2);
  \draw (1,1)--(1,5);
  \draw (2,1)--(2,2);
  \draw (0,3)--(1,3);
  \draw (0,4)--(2,4)--(2,3)--(1,3);
  \draw (0,5)--(3,5)--(3,4)--(2,4);
  \draw (2,4)--(2,5);
  \draw (2,3)--(4,3)--(4,2)--(3,2);
  \draw (4,0)--(4,1)--(3,1)--(3,0);
  \draw (2,1)--(2,0);
  \draw (1,1)--(1,0);
  \draw (4,1)--(4,2);
  \draw (3,3)--(3,4);
  \draw (2,2)--(2,3);
  \draw (3,2)--(3,3);
  \node at (2, -1) {\large $(b)$};
\end{tikzpicture}}$$
        \caption{(a) Key diagram $D_1$ and (b) element of $KKD(D_1)$}
        \label{fig:kr}
    \end{figure}
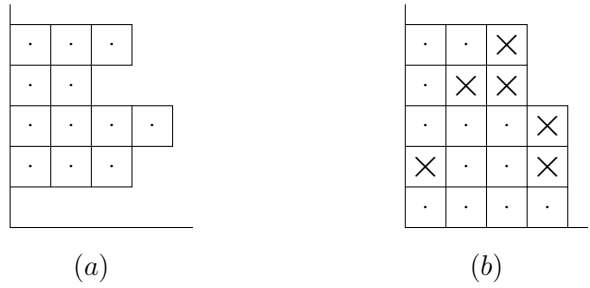
\end{example}

Considering the definition of $\mathfrak{L}_\alpha$ for $\alpha=(\alpha_1,\hdots,\alpha_n)\in\mathbb{Z}_{\ge 0}^n$ provided by Theorem~\ref{thm:Lascoux}, we see that the minimal degree of a monomial in $\mathfrak{L}_\alpha$ is given by $\sum_{i=1}^n\alpha_i$, while the maximal degree of a monomial in $\mathfrak{L}_\alpha$, i.e., the total degree of $\mathfrak{L}_\alpha$, is given by $\mathrm{MaxG}(\mathbb{D}(\alpha))+\sum_{i=1}^n\alpha_i$. Consequently, one can compute the total degree of $\mathfrak{L}_\alpha$ using the values $\mathrm{MaxG}(\mathbb{D}(\alpha))$ and $\sum_{i=1}^n\alpha_i$. In \textbf{\cite{Pan2}}, the authors establish an algorithm for computing $\mathrm{MaxG}(\mathbb{D}(\alpha))$ in terms of counting ``snowflakes" in associated ``snow diagrams". The algorithm takes as input the key diagram $\mathbb{D}(\alpha)$ and outputs a decorated diagram called a ``snow diagram" denoted $\mathsf{snow}(\mathbb{D}(\alpha))$. Given a diagram $D$ containing no ghost cells, we construct the corresponding snow diagram $\mathsf{snow}(D)$ as follows.
\begin{enumerate}
    \item Working from top to bottom, in each row mark the right-most cell which has no marked cells above it and in the same column; the marked cells are referred to as \textbf{dark clouds} and the collection of positions of dark clouds is denoted $\mathsf{dark}(D)$.
    \item Fill all empty positions below dark clouds with a snowflake $\ast$.
\end{enumerate}
In Figure~\ref{fig:krsnow}, we illustrate the snow diagram associated with the key diagram of Example~\ref{ex:kr}. Given a diagram $D$, let $\text{sf}(D)$ denote the number of snowflakes in $\mathsf{snow}(D)$.

\begin{theorem}[Theorems 1.1 and 1.2, \textup{\textbf{\cite{Pan2}}}]\label{thm:JP}
    If $D$ is a key diagram, then $\mathrm{MaxG}(D)=\textup{sf}(D)$.
\end{theorem}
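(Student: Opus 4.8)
The plan is to prove the two inequalities $\mathrm{MaxG}(D)\le \text{sf}(D)$ and $\mathrm{MaxG}(D)\ge \text{sf}(D)$ separately, after first recording two structural facts about key diagrams. The first is that, because $D=\mathbb{D}(\alpha)$ is left-justified, the marking procedure places exactly one dark cloud in each nonempty column: once a column receives a dark cloud every lower cell of that column has a marked cell above it and is skipped, while an induction on the rows (processed top to bottom) shows that each nonempty column is eventually marked. Writing $d_c$ for the row of the dark cloud in column $c$, this reduces the statistic to $\text{sf}(D)=\sum_c e_c$, where $e_c$ is the number of empty positions of column $c$ strictly below row $d_c$. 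The second fact is a conservation identity: a Kohnert move slides a cell down within its column and so preserves the number of empty positions lying weakly below the topmost occupied cell of each column, whereas a ghost move fills one such empty position and turns its non-ghost source into a ghost. Since cells never rise, the topmost occupied position $t_c$ of each column is constant, each ghost move adds exactly one ghost, and so every reachable $T\in KKD(D)$ satisfies
\[ |G(T)| \;=\; \sum_c\bigl(\varepsilon_c^{\mathrm{init}}-\varepsilon_c(T)\bigr), \]
where $\varepsilon_c(T)$ counts the empty positions of column $c$ in rows $1,\dots,t_c$ and $\varepsilon_c^{\mathrm{init}}=t_c-|\{i:\alpha_i\ge c\}|$. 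Thus maximizing ghosts is the same as minimizing the total number of empty positions surviving below the column tops.

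For the lower bound I would exhibit an explicit sequence of $K$-Kohnert moves realizing $\text{sf}(D)$ ghosts, by a kind of ``snowplow'': process the columns from right to left, and within the current column apply ghost moves from the top downward, each time sending the rightmost cell of its row into the highest empty slot below and depositing a ghost. Processing right-to-left guarantees that, by the time we treat column $c$, the cells originally to its right have already descended, so the relevant cell of column $c$ is genuinely rightmost in its row; processing from the top downward guarantees that no previously deposited ghost of the same column lies strictly between a moving cell and its target, so the blocking clause of the move definition is never triggered. One then checks that this procedure fills every one of the $e_c$ empty positions below each dark cloud, producing $\sum_c e_c=\text{sf}(D)$ ghosts, whence $\mathrm{MaxG}(D)\ge\text{sf}(D)$.

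By the conservation identity, the upper bound $\mathrm{MaxG}(D)\le\text{sf}(D)$ is equivalent to the assertion that every reachable $T$ satisfies $\sum_c \varepsilon_c(T)\ge \sum_c u_c$, where $u_c$ is the number of empty positions of column $c$ strictly \emph{above} its dark cloud (rows $d_c+1,\dots,t_c$); equivalently, one can never drain the diagram below its initial number of empties lying above dark clouds. The main obstacle is that this bound is genuinely global rather than column-by-column: one can create strictly more ghosts in a single column than it has snowflakes, but only by depositing a ghost in a high rightmost position, which freezes an entire row (a move does nothing when the rightmost cell of a row is a ghost) and thereby permanently traps empties in columns to the left, so the surpluses and deficits must be seen to cancel across columns. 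My plan for this step is to track an invariant certifying that the empties forced above the dark clouds cannot all be filled: concretely, I would match each such trapped empty to a distinct surviving empty of $T$ by following its column upward to the first blocking ghost and then across the frozen row it heads, using that cells only descend and that a ghost permanently blocks its column above it. Making this charging precise—so that distinct trapped empties are sent to distinct surviving empties—is the crux and the step I expect to be hardest; once it is in place, combining it with the conservation identity yields $|G(T)|\le \text{sf}(D)$ for all $T$, and together with the construction gives the claimed equality.
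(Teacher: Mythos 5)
You should first note that the paper does not prove this statement at all; it is imported verbatim from Pan and Yu (Theorems 1.1 and 1.2 of \textbf{\cite{Pan2}}), so the only internal benchmarks are Proposition~\ref{prop:shiftmaxgesn} (the direction $\mathrm{MaxG}(D)\ge\textup{sf}(D)$ for the wider class of generalized skew diagrams) and the counting arguments of Section~\ref{sec:greedy}. Measured against those, your lower-bound construction has a genuine error: the right-to-left ``snowplow'' fails, and it fails on the paper's own running example $\alpha=(0,3,4,2,3)$ of Example~\ref{ex:kr}. Processing column $4$ first deposits ghosts $\langle 3,4\rangle$ and $\langle 2,4\rangle$; these ghosts are then the rightmost entries of rows $3$ and $2$, and since a $K$-Kohnert move does nothing when the rightmost cell of a row is a ghost, those rows are permanently frozen. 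Column $3$ then yields $\langle 5,3\rangle$ and $\langle 4,3\rangle$, freezing rows $5$ and $4$, after which every nonempty row is headed by a ghost: columns $2$ and $1$ contribute nothing, and you finish with $4$ ghosts against $\textup{sf}(D)=6$. Your justification---that right-to-left processing leaves the cell of column $c$ ``genuinely rightmost in its row''---overlooks exactly the freezing phenomenon you invoke in your upper-bound discussion: ghost moves do not remove the cells to the right, they replace them with immovable rightmost ghosts. The correct order is left-to-right, and plain Kohnert moves must be interleaved with ghost moves (compare the proof of Proposition~\ref{prop:shiftmaxgesn}, which applies $N-1$ Kohnert moves followed by a single ghost move per row). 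Ghost-only play is not merely inconvenient but insufficient here: any $T$ with $6$ ghosts must contain a ghost in column $1$, which requires some row to have its cells in columns $\ge 2$ vacated beforehand, and vacating them by ghost moves would leave a blocking rightmost ghost in that row; only non-ghost Kohnert moves can clear them, so $\widehat{\mathrm{MaxG}}(\mathbb{D}(\alpha))<\mathrm{MaxG}(\mathbb{D}(\alpha))$ for this very diagram (see also Example~\ref{ex:greedyno}).

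The upper bound is a second, conceded gap: the charging map (follow a trapped empty up its column to the first blocking ghost, then across the frozen row) is never defined precisely, and its injectivity---which you correctly identify as the crux---is not argued, so no inequality is actually established; for comparison, the paper's own upper-bound arguments in this spirit (Lemma~\ref{lem:shift}, Proposition~\ref{prop:JPCons}, and the column-labeling count in Theorem~\ref{thm:gmain}) each require a full page of care precisely at this step. There are also two smaller inaccuracies worth fixing. First, it is false that every nonempty column of a key diagram receives a dark cloud: each row is assigned at most one mark, so for $\alpha=(3,1)$ the clouds sit at $(2,1)$ and $(1,3)$ while column $2$ is nonempty and cloudless; your identity $\textup{sf}(D)=\sum_c e_c$ survives only if the sum ranges over columns that actually contain a cloud. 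Second, the ``topmost occupied position $t_c$ is constant'' claim is false as stated---a Kohnert move applied to the top cell of a column vacates it---though your conservation identity $|G(T)|=\sum_c\bigl(\varepsilon_c^{\mathrm{init}}-\varepsilon_c(T)\bigr)$ is correct and salvageable if $t_c$ is frozen at its initial value, since cells never rise above it and each ghost move destroys exactly one empty position weakly below it. In sum: the skeleton (conservation identity plus explicit construction plus charging bound) is reasonable, but as written the $\ge$ direction is refuted by the paper's own example and the $\le$ direction is unexecuted, so the proposal does not constitute a proof.
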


\begin{example}\label{ex:snowd}
Let $\alpha=(0,3,4,2,3)$. The snow diagram $\mathsf{snow}(\mathbb{D}(\alpha))$ is illustrated in Figure~\ref{fig:krsnow}. Considering Theorem~\ref{thm:JP}, $\mathrm{MaxG}(\mathbb{D}(\alpha))=\textup{sf}(\mathbb{D}(\alpha))=6$. Thus, the total degree of $\mathfrak{L}_\alpha$ is equal to $6+\sum_{i=1}^5\alpha_i=6+12=18$.
    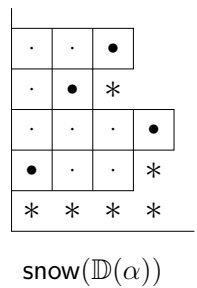
\begin{figure}[H]
        \centering
        $$\scalebox{0.9}{\begin{tikzpicture}[scale=0.6]
        \node at (0.5, 1.5) {$\bullet$};
        \node at (0.5, 0.5) {\scalebox{1.5}{$\ast$}};
        \node at (1.5, 0.5) {\scalebox{1.5}{$\ast$}};
        \node at (2.5, 0.5) {\scalebox{1.5}{$\ast$}};
        \node at (3.5, 0.5) {\scalebox{1.5}{$\ast$}};
        \node at (3.5, 1.5) {\scalebox{1.5}{$\ast$}};
        \node at (2.5, 3.5) {\scalebox{1.5}{$\ast$}};
        \node at (1.5, 1.5) {$\cdot$};
        \node at (2.5, 1.5) {$\cdot$};
  \node at (0.5, 2.5) {$\cdot$};
  \node at (1.5, 2.5) {$\cdot$};
  \node at (2.5, 2.5) {$\cdot$};
  \node at (3.5, 2.5) {$\bullet$};
  \node at (0.5, 3.5) {$\cdot$};
  \node at (1.5, 3.5) {$\bullet$};
  \node at (0.5, 4.5) {$\cdot$};
  \node at (1.5, 4.5) {$\cdot$};
  \node at (2.5, 4.5) {$\bullet$};
  \draw (0,5.5)--(0,0)--(4.5,0);
  \draw (0,1)--(3,1)--(3,2)--(0,2);
  \draw (1,1)--(1,5);
  \draw (2,1)--(2,2);
  \draw (0,3)--(1,3);
  \draw (0,4)--(2,4)--(2,3)--(1,3);
  \draw (0,5)--(3,5)--(3,4)--(2,4);
  \draw (2,4)--(2,5);
  \draw (2,3)--(4,3)--(4,2)--(3,2);
  \draw (2,2)--(2,3);
  \draw (3,2)--(3,3);
  \node at (2, -1) {\large $\mathsf{snow}(\mathbb{D}(\alpha))$};
\end{tikzpicture}}$$
        \caption{Snow diagram}
        \label{fig:krsnow}
    \end{figure}
\end{example}

\begin{remark}
    Theorem 1.1 of \textup{\textbf{\cite{Pan2}}} actually establishes a stronger result than that described in Theorem~\ref{thm:JP}. Among other things, Theorem 1.1 of \textup{\textbf{\cite{Pan2}}} establishes that not only can one determine $\mathrm{MaxG}(\mathbb{D}(\alpha))$ from the snow diagram of $\mathbb{D}(\alpha)$, but also the leading monomial in tail lexicographic order of the associated Lascoux polynomial $\mathfrak{L}_\alpha$.
\end{remark}

The following example shows that, unfortunately, Theorem~\ref{thm:JP} does not apply to all diagrams.

\begin{example}\label{ex:snowcounter}
    Let $D=\{(3,1),(2,2)\}$. The diagram $D$ along with its snow diagram $\mathsf{snow}(D)$ are illustrated in Figure~\ref{fig:snowcounter} below. While $\textup{sf}(D)=3$, one can compute that $\mathrm{MaxG}(D)=2$.
    \begin{figure}[H]
        \centering
        $$\scalebox{0.9}{\begin{tikzpicture}[scale=0.6]
        \node at (0.5, 2.5) {$\cdot$};
        \node at (1.5, 1.5) {$\cdot$};
  \draw (0,3.5)--(0,0)--(2.5,0);
  \draw (0,2)--(1,2)--(1,3)--(0,3);
  \draw (1,1)--(1,2)--(2,2)--(2,1)--(1,1);
  \node at (1, -1) {\large $D$};
\end{tikzpicture}}\quad\quad\quad\quad\quad\quad\quad\quad \scalebox{0.9}{\begin{tikzpicture}[scale=0.6]
        \node at (0.5, 2.5) {$\bullet$};
        \node at (1.5, 1.5) {$\bullet$};
  \draw (0,3.5)--(0,0)--(2.5,0);
  \draw (0,2)--(1,2)--(1,3)--(0,3);
  \draw (1,1)--(1,2)--(2,2)--(2,1)--(1,1);
        \node at (0.5, 0.5) {\scalebox{1.5}{$\ast$}};
        \node at (0.5, 1.5) {\scalebox{1.5}{$\ast$}};
        \node at (1.5, 0.5) {\scalebox{1.5}{$\ast$}};
  \node at (1, -1) {\large $SD(D)$};
\end{tikzpicture}}$$
        \caption{$\mathrm{MaxG}(D)\neq \text{sf}(D)$}
        \label{fig:snowcounter}
    \end{figure}
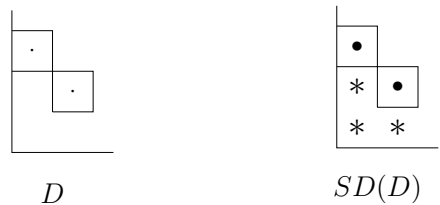
\end{example}

However, the authors of the present article believe that, in general, the value $\text{sf}(D)$ defined in Theorem~\ref{thm:JP} provides an upper bound on $\mathrm{MaxG}(D)$. In particular, we make the following conjecture.

\begin{conj}\label{conj:snow}
    If $D$ is a diagram that contains no ghost cells, then $\mathrm{MaxG}(D)\le \textup{sf}(D)$.
\end{conj}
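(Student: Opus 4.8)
The plan is to exploit that every $K$-Kohnert move acts within a single column: when a move is applied at row $r$, the rightmost cell of that row (say in column $c$) descends within column $c$, and any ghost left behind also lies in column $c$. In particular the number of non-ghost cells in each column is invariant, and a given column $c$ is altered only by moves whose active cell lies in $c$. With this in mind, I would first set up a monovariant. For a diagram $D'$ put
$$E(D') = \sum_{c\ge 1}\bigl|\{\, r : (r,c)\notin D',\ \langle r,c\rangle\notin D',\ \text{and } D' \text{ has a cell in column } c \text{ above row } r \,\}\bigr|,$$
the number of empty positions lying strictly below the topmost occupied cell of their column. A direct case analysis of the two move types shows that a ghost move at $(r,c)$ fills the highest empty position below it and leaves a ghost in place, so it decreases $E$ by exactly $1$ while increasing $|G(\cdot)|$ by $1$; a Kohnert move at $(r,c)$ either leaves $E$ unchanged (when the active cell is not topmost in its column) or decreases it by $1$ (when it is), and never changes $|G(\cdot)|$. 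Hence $|G(D')| + E(D')$ is non-increasing along any sequence of moves, and since $D$ is ghost-free this yields, for every $\widetilde D\in KKD(D)$, the bound $|G(\widetilde D)| \le E(D) - E(\widetilde D)\le E(D)$, that is, $\mathrm{MaxG}(D)\le E(D)$.

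The remaining and harder task is to sharpen $E(D)$ down to $\mathrm{sf}(D)$. One checks that every snowflake position is an $E$-position, since a snowflake sits below a dark cloud and a dark cloud is in particular a cell above it in the same column; thus $\mathrm{sf}(D)\le E(D)$, and the deficit $E(D)-\mathrm{sf}(D)$ consists exactly of the empty positions lying below the topmost cell of their column but below no dark cloud. These are precisely the empties under cells that are permanently \emph{shielded}, i.e.\ cells that can never become the rightmost cell of their row because some cell always sits to their right; the crude potential $E$ ignores the rightmost-cell hypothesis and therefore overcounts exactly these. The example $D=\{(3,1),(1,2),(2,2),(3,2)\}$, where $E(D)=2$ but $\mathrm{sf}(D)=\mathrm{MaxG}(D)=0$, shows the gap can be strict. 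The goal is thus to build the shielding constraint into the count, running the argument above only against the ``accessible'' empties recorded by the dark-cloud procedure. A natural first reduction is induction on the index $c^\ast$ of the rightmost nonempty column: no cell ever occupies a column beyond $c^\ast$, so the cells of $c^\ast$ are always rightmost in their rows and column $c^\ast$ behaves as an isolated single column, for which the established bound already gives $\mathrm{MaxG}_{c^\ast}\le E_{c^\ast}=\mathrm{sf}_{c^\ast}$; one would then try to combine this with the bound for $D$ restricted to columns $<c^\ast$.

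The main obstacle is that $\mathrm{sf}$ is \emph{not} monotone under Kohnert moves, so no potential agreeing with $\mathrm{sf}$ on ghost-free diagrams can serve as a monovariant. Concretely, for $D=\{(3,1),(3,2),(1,2)\}$ a single Kohnert move at row $3$ gives $\mathcal{K}(D,3)=\{(3,1),(2,2),(1,2)\}$ with $\mathrm{sf}(D)=1$ but $\mathrm{sf}(\mathcal{K}(D,3))=2$: lowering the shielding cell $(3,2)$ unshields column $1$ and creates a new dark cloud there. Hence the bound $\mathrm{MaxG}(D)\le\mathrm{sf}(D)$ cannot be localized to columns; indeed in this same example one may realize a ghost in the unshielded column $1$ by first spending a ghost-free Kohnert move on column $2$, so the ``ghost budget'' can be transferred between columns and only the \emph{total} is controlled by $\mathrm{sf}(D)$. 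I therefore expect the crux to be a global bookkeeping argument reconciling this transfer: either an exchange/charging argument that, along an optimal sequence realizing $\mathrm{MaxG}(D)$, injects each created ghost to a distinct snowflake of the original diagram $D$, or an induction on the number of cells in which the non-additive interaction between column $c^\ast$ and the columns to its left is tracked explicitly. Making either scheme respect the ghost-blocking third clause in the definition of a $K$-Kohnert move, whereby a ghost strictly between a cell and the empty slot below it freezes that cell, is where I anticipate the real difficulty.
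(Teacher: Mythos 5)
First, a point of context: the statement you set out to prove is stated in the paper as Conjecture~\ref{conj:snow} and is left \emph{open} there; the paper establishes it only for special families (generalized skew diagrams in Theorem~\ref{thm:shift}, and diagrams admitting a column partition as in Proposition~\ref{prop:JPCons}, e.g., Corollary~\ref{cor:UB}), so there is no full proof to compare yours against. Judged on its own terms, your proposal contains a genuine gap rather than a proof. The monovariant part is correct: with $E(D')$ counting empty positions strictly below the topmost cell of their column, a ghost move trades one unit of $E$ for one ghost cell, a Kohnert move never increases $|G(\cdot)|+E(\cdot)$, and hence $\mathrm{MaxG}(D)\le E(D)$; your examples showing that $E(D)$ can strictly exceed $\textup{sf}(D)$ and that $\textup{sf}$ is not monotone under Kohnert moves are also accurate. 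But $E$ discards exactly the rightmost-cell and ghost-blocking constraints in which the entire content of the conjecture resides, and the gap $E(D)-\textup{sf}(D)$ is unbounded (a single cell at $(n,1)$ shielded by a full column $\{(r,2)~|~r\in[n]\}$ gives $E=n-1$ while $\mathrm{MaxG}=\textup{sf}=0$), so this is not a matter of tightening a constant. Everything after the bound $\mathrm{MaxG}(D)\le E(D)$ --- the charging of each created ghost to a distinct snowflake of $D$ along an optimal sequence, or the induction tracking the interaction between the rightmost column and the rest --- is announced as a program, not carried out, and you yourself correctly identify the obstruction (non-monotonicity of $\textup{sf}$ and cross-column transfer of the ghost budget) without resolving it.

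For what it is worth, the paper's partial results suggest the missing global step is structural rather than potential-theoretic. In Lemma~\ref{lem:shift} the upper bound $\mathrm{MaxG}(D)\le\textup{sf}(D)$ is obtained not by a monovariant but by embedding $D$ into the key diagram $Key(D)$ and proving that $T\mapsto T\cup S$ with $S=Key(D)\setminus D$ maps $KKD(D)$ into $KKD(Key(D))$, after which Theorem~\ref{thm:JP} for key diagrams applies; Proposition~\ref{prop:JPCons} then glues such bounds across a column partition by matching dark clouds together with their counts of empty positions below. Your ``budget transfer'' example is precisely the phenomenon that hypotheses (b) and (c) of Proposition~\ref{prop:JPCons} are designed to exclude, which is why the paper's results remain conditional. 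A completed version of your approach would have to handle exactly the configurations those hypotheses rule out --- and making any charging scheme respect the third (ghost-blocking) clause in the definition of a $K$-Kohnert move, as you anticipate, is where the real difficulty still lies.
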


In the section that follows, we establish means of computing $\mathrm{MaxG}(D)$ for additional special families of diagrams and, as a consequence, provide further evidence for Conjecture~\ref{conj:snow}.

\section{Extensions of Theorem~\ref{thm:JP}}\label{sec:extJP}

In this section, we extend Theorem~\ref{thm:JP} by identifying additional families of diagrams $D$ for which either $\mathrm{MaxG}(D)=\text{sf}(D)$ or a slight variation of $\mathsf{snow}(D)$ can be used to compute $\mathrm{MaxG}(D)$.  The main families of interest are the ``skew" and ``lock diagrams" of \textbf{\cite{KP1}}. In the case of skew diagrams, we show that $\mathrm{MaxG}(D)=\text{sf}(D)$. On the other hand, focusing on a subfamily of lock diagrams, we show that a slight variation of $\mathsf{snow}(D)$ can be used to compute $\mathrm{MaxG}(D)$; of note in this case is the application of a promising approach for establishing such results which makes use of labelings of cells. We start with skew diagrams. 

\begin{definition}
    For a weak composition $\alpha=(\alpha_1,\hdots,\alpha_n)$, the \textbf{skew diagram} $\mathbb{S}(\alpha)$ is constructed as follows:
\begin{itemize}
    \item left justify $\alpha_i$ cells in row $i$ for $i\in [n]$,
    \item for $j$ from $1$ to $n$ such that $\alpha_j>0$, take $i<j$ maximal such that $\alpha_i>0$, and if $\alpha_i>\alpha_j$, then shift rows $k\ge j$ rightward by $\alpha_i-\alpha_j$ columns,
    \item shift each row $j\in [n]$ rightward by $|\{i~|~1\le i<j,~\alpha_i=0\}|$ columns.
\end{itemize}
\end{definition}

\begin{example}
    Let $\alpha=(0,2,0,1,3)$. The skew diagram $\mathbb{S}(\alpha)$ is illustrated in Figure~\ref{fig:skew} below.
    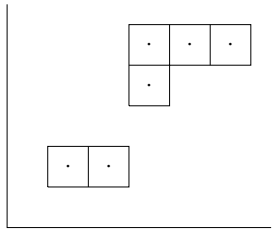
\begin{figure}[H]
        \centering
        $$\scalebox{0.9}{\begin{tikzpicture}[scale=0.6]
            \node at (1.5,1.5) {$\cdot$};
            \node at (2.5,1.5) {$\cdot$};
            \node at (3.5,3.5) {$\cdot$};
            \node at (3.5,4.5) {$\cdot$};
            \node at (4.5,4.5) {$\cdot$};
            \node at (5.5,4.5) {$\cdot$};
            \draw (6.5,0)--(0,0)--(0,5.5);
            \draw (1,1)--(3,1)--(3,2)--(1,2)--(1,1);
            \draw (2,1)--(2,2);
            \draw (3,3)--(4,3)--(4,4)--(6,4)--(6,5)--(3,5)--(3,3);
            \draw (3,4)--(4,4)--(4,5);
            \draw (5,4)--(5,5);
        \end{tikzpicture}}$$
        \caption{Skew diagram}
        \label{fig:skew}
    \end{figure}
\end{example}

\begin{remark}
    Skew diagrams were introduced in \textup{\textbf{\cite{KP1}}}, where the authors define skew polynomials to be the Kohnert poylnomials \textup(see Remark~\ref{rem:Kohnert}\textup) associated with skew diagrams. Moreover, they show that skew polynomials form a basis of $\mathbb{Z}[x_1,x_2,\hdots]$, have nonnegative expansions into demazure characters, and define polynomial generalizations of skew Schur functions in a precise sense. See \textup{\textbf{\cite{KP1}}} for complete details.
\end{remark}

The first main result of this section is as follows.

\begin{theorem}\label{thm:skew}
    If $D$ is a skew diagram, then $\mathrm{MaxG}(D)=\textup{sf}(D)$.
\end{theorem}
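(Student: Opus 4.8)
The plan is to prove the two inequalities $\mathrm{MaxG}(D)\ge \text{sf}(D)$ and $\mathrm{MaxG}(D)\le\text{sf}(D)$ separately, using a structural description of skew diagrams that plays the role the left-justified shape plays for key diagrams in Theorem~\ref{thm:JP}. First I would establish, directly from the three shifting steps in the definition, the following structural lemma: when the nonzero rows of a skew diagram are read from bottom to top, both the left boundary and the right boundary are weakly increasing. Equivalently, writing $L_r,R_r$ for the left/right edges of row $r$, a column $c$ is occupied precisely in the rows $r$ with $L_r\le c\le R_r$, so among nonzero rows each column's occupied cells form an interval. I would extract three consequences used throughout: each column contains at most one dark cloud; a column $c$ carries a dark cloud exactly when $c=R_r$ for some $r$, and then the dark cloud sits at the topmost such row $r$; and no cell lying to the right of a dark cloud $(r,c)$ can occur in any row weakly below $r$, since $R_{r'}\le R_r=c$ for $r'\le r$.

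For the lower bound I would exhibit an explicit winning sequence. Order the dark clouds by increasing row index (bottom to top), and for each dark cloud $(r,c)$ repeatedly apply ghost moves to the cell currently occupying column $c$, sending it to the highest empty position below (jumping over any filled cells) and depositing a ghost at each step, until it has passed every empty position of column $c$ lying below $(r,c)$. Since descent always goes to the \emph{highest} empty position, the cell visits each such empty position in turn and leaves behind exactly one ghost per snowflake. The point where the skew hypothesis is essential is the claim that every one of these ghost moves is nontrivial, i.e.\ the descending cell is always the rightmost occupant of its row: by the structural lemma no original cell sits to the right of column $c$ in any row below $r$; every previously processed (lower) dark cloud lies in a column strictly left of $c$ and hence deposits its ghosts strictly left of $c$; and every not-yet-processed (higher) dark cloud lies in a column $\ge c$ but only in rows above $r$. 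Summing over all dark clouds realizes one ghost per snowflake, giving $\mathrm{MaxG}(D)\ge\text{sf}(D)$.

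For the upper bound I would bound, in an arbitrary $\tilde D\in KKD(D)$, the number of ghost cells lying in each column. The elementary conservation law---each ghost move fills one previously empty cell and turns one cell into a ghost, so in column $c$ the identity $\#\{\text{ghosts}\}=\#\{\text{filled}\}-\#\{\text{original cells}\}$ holds while $\#\{\text{filled}\}$ never exceeds the index of the topmost original cell of $c$---already confines ghosts to occupied columns and caps their number. The real work is to sharpen this to the snowflake count: I would show that a cell strictly above a dark cloud of its column can never become the rightmost cell of its row, hence never performs a ghost move, so that ghosts in column $c$ are confined to positions weakly below its dark cloud. This uses the monotone boundary to guarantee that the cells to its right, which block it, cannot all be cleared away: any attempt to vacate them by ghost moves merely replaces them with ghosts that still block, and the monotonicity forbids the staircase pattern of Example~\ref{ex:snowcounter} that would otherwise permit such clearing.

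The hard part will be this upper bound, and specifically ruling out that a clever interleaving of Kohnert moves---which, unlike ghost moves, vacate a cell cleanly---on the columns to the right could free up a high cell and manufacture ghosts beyond the snowflake count. To handle this I expect to need a global monovariant rather than a purely column-by-column count: a natural candidate is to track the quantity $|G(\tilde D)|+\text{sf}(\tilde D)$, or a suitable refinement, and to show move by move that it never exceeds $\text{sf}(D)$, so that when $|G(\tilde D)|$ is maximized the residual snow count is nonnegative. Verifying that each of the two move types respects this monovariant, using the structural lemma to control how the dark clouds and snowflakes of $\tilde D$ evolve, is where I expect the bulk of the technical effort to lie; once both inequalities are in hand, the equality $\mathrm{MaxG}(D)=\text{sf}(D)$ follows.
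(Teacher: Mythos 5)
Your structural lemma itself (weakly increasing left and right boundaries, each row an interval) is correct and is essentially the paper's Definition~\ref{def:skew} together with Proposition~\ref{prop:skew}, but two of the consequences you draw from it are false, and they sink your lower bound. A dark cloud need not be the rightmost cell of its row, and a column carrying a dark cloud need not be a right edge $R_r$: in $\mathbb{S}(0,2,2)=\{(2,2),(2,3),(3,2),(3,3)\}$ the dark clouds are $(3,3)$ and $(2,2)$, yet $R_2=3$, so your claims that the dark cloud sits at $c=R_r$ and that ``no original cell sits to the right of column $c$ in any row weakly below $r$'' both fail already in the dark cloud's own row. Consequently your prescribed move is not even available at the first step: a ghost move at row $2$ acts on $(2,3)$, not on the dark-cloud cell $(2,2)$. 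Worse, your construction uses only ghost moves, and ghost moves alone provably cannot realize $\text{sf}(D)$ for skew diagrams: for $\mathbb{S}(0,2,1)=\{(2,2),(2,3),(3,3)\}$ one has $\text{sf}=2$, but every ghost-only sequence stops at one ghost (ghosting $(2,3)$ leaves a ghost that makes row $2$ immovable and blocks $(3,3)$ from descending; ghosting $(3,3)$ first fills $(1,3)$ and then $(2,3)$ is stuck). This diagram is, up to an empty column, exactly the paper's Example~\ref{ex:greedyno}, where $\widehat{\mathrm{MaxG}}(D)=1<2=\mathrm{MaxG}(D)$. Kohnert moves are indispensable: the paper's Proposition~\ref{prop:shiftmaxgesn} precedes each ghost move by $N-1$ Kohnert moves precisely to cleanly vacate the cells to the right of the target column, and organizes the whole construction as an induction on the number of nonempty columns.

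Your upper bound is not a proof but a plan, and its central claim is also false: ghosts in a column are \emph{not} confined weakly below that column's dark cloud. In $\mathbb{S}(0,2,2)$ above, apply a Kohnert move at row $3$ (sending $(3,3)$ to $(1,3)$) and then a ghost move at row $3$: this deposits a ghost at $(3,2)$, strictly above the dark cloud $(2,2)$ of column $2$. So ``a cell strictly above a dark cloud of its column can never become rightmost in its row'' is exactly the kind of interleaving you worried about, and it does occur. The proposed monovariant $|G(\tilde D)|+\text{sf}(\tilde D)$ is moreover undefined as stated, since $\mathsf{snow}(-)$ is only defined for diagrams without ghost cells, and no verification of invariance under either move type is attempted. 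The paper's actual route to $\mathrm{MaxG}(D)\le\text{sf}(D)$ is different and global rather than per-column: when $\mathsf{dark}(Key(D))=\mathsf{dark}(D)$, one checks that the fixed complement $S=Key(D)\backslash D$ lies weakly northwest of $D$, so that $T\mapsto T\cup S$ embeds $KKD(D)$ into $KKD(Key(D))$ move-by-move, reducing the bound to Pan--Yu's Theorem~\ref{thm:JP} (Lemma~\ref{lem:shift}); when the dark clouds disagree, one partitions the columns at the rows where dark clouds disappear and applies the column-splitting inequality $\mathrm{MaxG}(D)\le\sum_i\mathrm{MaxG}(D_i)=\sum_i\text{sf}(D_i)=\text{sf}(D)$ of Proposition~\ref{prop:JPCons}. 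Some idea of this comparison type is what your sketch is missing; as written, both inequalities are unproven.
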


To prove Theorem~\ref{thm:skew}, we show that $\mathrm{MaxG}(D)=\text{sf}(D)$ holds for a more general family of diagrams which we are aptly calling ``generalized skew diagrams" and define as follows.

\begin{definition}\label{def:skew}
    Let $D$ be a diagram that contains no ghost cells. Suppose that the nonempty rows of $D$ are $\{r_i\}_{i=1}^n$ where $r_i<r_{i+1}$ for $i\in [n-1]$ when $n>1$, and that $(r_i,c_i^+)$ \textup(resp., $(r_i,c_i^-)$\textup) is the rightmost \textup(resp., leftmost\textup) cell in row $r_i$ of $D$ for $i\in [n]$. Then we refer to $D$ as a \textbf{generalized skew diagram} provided that for each $i\in [n]$ \begin{itemize}
        \item  $(r_i,\tilde{c})\in D$ for $c_i^-<\tilde{c}<c_i^+$ and
        \item $c_i^+\le c_j^+$ and $c_i^-\le c_j^-$ for $i<j\in [n]$.
    \end{itemize}  
\end{definition}

First, let us show that generalized skew diagrams do, in fact, generalize skew diagrams.

\begin{prop}\label{prop:skew}
    If $D$ is a skew diagram, then $D$ is a generalized skew diagram. Moreover, the reverse implication does not hold in general. 
\end{prop}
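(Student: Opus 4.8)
The plan is to prove the forward implication by tracking, for each nonempty row, the total horizontal shift applied over the course of the construction of $\mathbb{S}(\alpha)$, and then to establish the failure of the converse with an explicit small counterexample detected by an invariant of skew diagrams. First I would record the easy structural fact that every nonempty row of $\mathbb{S}(\alpha)$ is a contiguous run of cells: the construction begins by left-justifying $\alpha_i$ cells in row $i$ (a contiguous block), and each of the two subsequent steps translates entire rows rightward by a uniform amount, which preserves contiguity and preserves the width $\alpha_i$ of row $i$. This already yields the first bullet of Definition~\ref{def:skew}. To obtain the monotonicity bullet, I would compute the total shift $s_j$ of row $j$. Writing $\delta_k = \max(0,\alpha_{i(k)}-\alpha_k)$ for the step-2 shift triggered at a nonempty row $k$, where $i(k)$ is the nearest nonempty row below $k$ (and $\delta_k=0$ if none exists), the step-2 shifts accumulate so that row $j$ receives $S_2(j)=\sum_{k\le j,\ \alpha_k>0}\delta_k$, while step 3 adds $S_3(j)=|\{i<j:\alpha_i=0\}|$. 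Hence $c_j^-=1+s_j$ and $c_j^+=\alpha_j+s_j$ with $s_j=S_2(j)+S_3(j)$.

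The monotonicity conditions then reduce, by transitivity of $\le$, to comparing two consecutive nonempty rows $a<b$. Here the telescoping is clean: $S_2(b)-S_2(a)=\delta_b$, since $b$ is the only nonempty row in $(a,b]$, and $S_3(b)-S_3(a)=b-a-1$, since every row strictly between $a$ and $b$ is empty. Thus $c_b^--c_a^-=\delta_b+(b-a-1)\ge 0$, while $c_b^+-c_a^+=(\alpha_b-\alpha_a)+\delta_b+(b-a-1)$. Splitting on whether $\alpha_a>\alpha_b$ (so $\delta_b=\alpha_a-\alpha_b$ and the first two summands cancel) or $\alpha_a\le\alpha_b$ (so $\delta_b=0$ and $\alpha_b-\alpha_a\ge 0$) shows $c_b^+-c_a^+\ge 0$ in both cases. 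This verifies $c_i^-\le c_j^-$ and $c_i^+\le c_j^+$ for all $i<j$, completing the forward direction.

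For the reverse implication, I would extract a constraint that every skew diagram must satisfy and then violate it. Specializing the shift computation to the lowest nonempty row $a_1$ (the first index with $\alpha_{a_1}>0$) gives $S_2(a_1)=0$, because $\delta_{a_1}=0$, and $S_3(a_1)=a_1-1$, because all rows below $a_1$ are empty; hence the leftmost cell of the bottom nonempty row of $\mathbb{S}(\alpha)$ always lies in the column equal to its row index. The single-cell diagram $D=\{(1,2)\}$ is vacuously a generalized skew diagram (its interval and monotonicity conditions are both empty), yet its bottom nonempty row is row $1$ with leftmost cell in column $2\neq 1$, so $D$ cannot equal any $\mathbb{S}(\alpha)$. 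This furnishes the required counterexample.

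The only genuine obstacle is the bookkeeping in the step-2 shift: one must argue carefully that the shifts accumulate additively across the processed rows and depend only on the original composition values, so that $\delta_k$ is well defined before any shifting occurs and the comparison $\alpha_{i(k)}>\alpha_k$ really refers to the nearest nonempty row below $k$. Once $s_j$ is pinned down as $S_2(j)+S_3(j)$, both the monotonicity case analysis and the bottom-row invariant follow immediately.
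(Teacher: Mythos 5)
Your proof is correct, and its forward direction is essentially the paper's argument with one step made more explicit: your total shift $s_j$ is exactly the paper's quantity $N_i$ (the number of empty positions to the left of row $r_i$'s cells), your formulas $c_j^-=1+s_j$, $c_j^+=\alpha_j+s_j$ match the paper's $c_i^-=N_i+1$, $c_i^+=a_i+N_i$, and your case split on $\alpha_a\le\alpha_b$ versus $\alpha_a>\alpha_b$ is the paper's case split on $a_i\le a_{i+1}$ versus $a_i>a_{i+1}$. Where the paper simply asserts ``considering the definition of skew diagram, it follows that $N_i\le N_{i+1}$ with $N_i+(a_i-a_{i+1})\le N_{i+1}$ if $a_i>a_{i+1}$,'' you derive this by telescoping the step-2 increments $\delta_k$ and the step-3 contributions between consecutive nonempty rows; that bookkeeping (in particular, that $\delta_k$ is determined by the original composition values and the shifts accumulate additively) is the piece the paper leaves implicit, and you handle it correctly. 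For the converse your route genuinely differs: the paper exhibits the six-cell diagram $\widehat{D}=\{(2,1),(2,2),(2,3),(2,4),(3,3),(3,4)\}$ and rules it out by computing the unique candidate $\mathbb{S}(0,4,2)$ (implicitly using that the row cell-counts of a skew diagram determine $\alpha$), whereas you use the minimal example $\{(1,2)\}$ together with a uniform invariant of skew diagrams --- the leftmost cell of the bottom nonempty row of any $\mathbb{S}(\alpha)$ lies in column equal to its row index, since $S_2(a_1)=0$ and $S_3(a_1)=a_1-1$. Your invariant argument is cleaner in that it excludes all compositions at once without identifying a candidate $\alpha$, at the cost of proving one extra structural lemma; both versions are valid.
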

\begin{proof}
    If $D$ contains exactly one nonempty row, then the result is clear. So, assume that the nonempty rows of $D$ are $r_1<\cdots<r_n$ with $n>1$. Moreover, assume that $$a_i=|\{(r_i,c)\in D~|~c\ge 1\}|\quad\text{and}\quad N_i=\{(r_i,c)\notin D~|~c\ge 1~\text{and }\exists~c^*>c~\text{such that}~(r,c^*)\in D\}|$$ for $i\in [n]$, i.e., $a_i$ is the number of cells in row $r_i$ of $D$ and $N_i$ is the number of empty positions occurring to the left of those cells. Note that, using the notation of Definition~\ref{def:skew}, we have that $c_i^+=a_i+N_i$ and $c_i^-=N_i+1$ for $1\le i\le n$. Evidently, to show that $D$ is a generalized skew diagram, it suffices to show that $c_i^-\le c_{i+1}^-$ and $c_i^+\le c_{i+1}^+$ for $i\in [n-1]$. Considering the definition of skew diagram, it follows that $N_i\le N_{i+1}$ with $N_i+(a_i-a_{i+1})\le N_{i+1}$ if $a_i>a_{i+1}$. Consequently, for $i\in [n-1]$, we have $c_i^-=N_i+1\le N_{i+1}+1=c_{i+1}^-$. Moreover, if $a_i\le a_{i+1}$, then $$c_i^+=a_i+N_i\le a_{i+1}+N_{i+1}=c_{i+1}^+;$$ while if $a_i>a_{i+1}$, then $$c_i^+=a_i+N_i= a_{i+1}+N_{i}+(a_i-a_{i+1})\le a_{i+1}+N_{i+1}=c_{i+1}^+.$$ Thus, $D$ is a generalized skew diagram.

    To see that not all generalized skew diagrams are skew diagrams, consider the diagram $$\widehat{D}=\{(2,1),(2,2),(2,3),(2,4),(3,3),(3,4)\}$$ illustrated in Figure~\ref{fig:nskew}. 

    \begin{figure}[H]
        \centering
        $$\scalebox{0.9}{\begin{tikzpicture}[scale=0.6]
            \draw (5.5,0)--(0,0)--(0,3.5);
            \node at (0.5, 1.5) {$\cdot$};
            \node at (1.5, 1.5) {$\cdot$};
            \node at (2.5, 1.5) {$\cdot$};
            \node at (3.5, 1.5) {$\cdot$};
            \node at (2.5, 2.5) {$\cdot$};
            \node at (3.5, 2.5) {$\cdot$};
            \draw (0,1)--(4,1)--(4,3)--(2,3)--(2,2)--(0,2);
            \draw (1,1)--(1,2);
            \draw (2,1)--(2,2)--(4,2);
            \draw (3,1)--(3,3);
        \end{tikzpicture}}$$
        \caption{Generalized skew diagram}
        \label{fig:nskew}
    \end{figure}
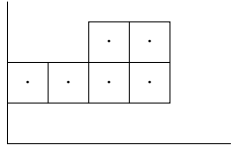

    \noindent
    The diagram $\widehat{D}$ is clearly a generalized skew diagram. On the other hand, since $\widehat{D}$ is not equal to $\mathbb{S}(0,4,2)=\{(2,2),(2,3),(2,4),(2,5),(3,6),(3,7)\}$, it follows that $\widehat{D}$ is not a skew diagram.
\end{proof}

As noted above, instead of proving Theorem~\ref{thm:skew} directly, we establish the following more general result. 


\begin{theorem}\label{thm:shift}
    If $D$ is a generalized skew diagram, then $\mathrm{MaxG}(D)= \textup{sf}(D)$.
\end{theorem}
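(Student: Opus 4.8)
The plan is to prove the two inequalities $\mathrm{MaxG}(D)\ge \mathrm{sf}(D)$ and $\mathrm{MaxG}(D)\le \mathrm{sf}(D)$ separately, after first recording two invariants of $K$-Kohnert moves that make the bookkeeping manageable. First, since every Kohnert or ghost move keeps the moved cell in its column and a ghost move only adds a ghost in that same column, the number $m_c=|\{r:(r,c)\in D\}|$ of non-ghost cells in column $c$ is constant on all of $KKD(D)$. Second, because non-ghost cells only ever descend and a ghost is created only at a position a cell currently occupies, every cell or ghost of any $\tilde{D}\in KKD(D)$ lying in column $c$ sits in a row at most $h_c$, the top occupied row of column $c$ in $D$. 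I would also pin down $\mathsf{snow}(D)$ for a generalized skew diagram: marking top to bottom, each nonempty row claims as its dark cloud the rightmost column of its block $[c_i^-,c_i^+]$ not already claimed from above, and since $c_1^+\le\cdots\le c_n^+$ this ``parking'' of columns is forced to proceed leftward in a controlled, staircase fashion. This gives a clean description of $\mathsf{dark}(D)$, and hence of $\mathrm{sf}(D)$ as the total number of empty positions lying below these dark clouds, independent of any row-by-row decomposition.

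For the lower bound I would exhibit an explicit move sequence achieving $\mathrm{sf}(D)$ ghosts, by induction on the number of nonempty rows, processing rows from the bottom upward. The key mechanism, already visible for $\widehat{D}$, is that one cannot simply cascade cells straight down: one must first use ordinary Kohnert moves to slide the cells of the lower, farther-left rows down and out of the way, thereby vacating the columns into which the higher cells will later descend, and only then apply ghost moves so that each descent of a now-rightmost cell deposits a ghost. The generalized skew hypothesis is exactly what makes this succeed: because lower rows have weakly smaller left and right endpoints, after the preparatory Kohnert moves each descending cell lands as the rightmost cell of its new row and may be ghost-moved again, so every intended ghost move is valid. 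I would then check, using the staircase description of $\mathsf{dark}(D)$ from the first paragraph, that the number of ghosts produced equals the number of empty positions below the dark clouds, giving $\mathrm{MaxG}(D)\ge \mathrm{sf}(D)$.

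The main obstacle is the reverse inequality $\mathrm{MaxG}(D)\le \mathrm{sf}(D)$, the instance of Conjecture~\ref{conj:snow} for this family, which is where monotonicity must do genuine work. A column-by-column bound fails: in a maximal diagram for $\widehat{D}$ a ghost appears in column $1$, yet column $1$ carries no dark cloud and so contributes no snowflakes, so the bound is irreducibly global. I would therefore aim for a single global invariant, or equivalently an injection from the ghosts of an arbitrary $\tilde{D}\in KKD(D)$ into the snowflakes of $\mathsf{snow}(D)$. The natural data to track are, for each row $r$, the rightmost occupied column together with the per-column ceilings $h_c$: a ghost at $(r,c)$ can only have been created while every column to the right of $c$ was empty in row $r$, and the generalized skew condition forces this row profile to remain monotone enough that these creation constraints, summed over all ghosts, cannot exceed the number of empty cells below the dark-cloud staircase. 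Making this accounting exact -- in particular controlling how the dark-cloud markings interact when one peels off the rightmost column in an inductive version of the argument -- is the delicate heart of the proof, and is precisely the step that collapses for diagrams violating the generalized skew inequalities, as Example~\ref{ex:snowcounter} shows.

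Combining the two inequalities yields $\mathrm{MaxG}(D)=\mathrm{sf}(D)$ for every generalized skew diagram $D$. Since every skew diagram is a generalized skew diagram by Proposition~\ref{prop:skew}, Theorem~\ref{thm:skew} then follows immediately as a special case.
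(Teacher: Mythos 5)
Your lower-bound sketch is sound and is essentially the paper's Proposition~\ref{prop:shiftmaxgesn} in different clothing (the paper inducts on nonempty columns, peeling off the leftmost dark-cloud column after a preparatory block of Kohnert moves capped by ghost moves, rather than inducting on rows from the bottom up; either organization can be made to work). The genuine gap is the upper bound, and you have named it yourself: ``making this accounting exact \dots is the delicate heart of the proof'' is precisely the step you never carry out. You propose an injection from ghosts of an arbitrary $\tilde{D}\in KKD(D)$ into snowflakes of $\mathsf{snow}(D)$, correctly observe that a ghost at $(r,c)$ is created only when $(r,c)$ is rightmost in its row, and then assert that the generalized skew condition makes the summed constraints work out --- but no invariant is defined, no injection is constructed, and nothing in the sketch explains why the count closes up globally. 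As written, the proposal proves $\mathrm{MaxG}(D)\ge \textup{sf}(D)$ and merely conjectures $\mathrm{MaxG}(D)\le \textup{sf}(D)$.

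Moreover, your claim that the bound is ``irreducibly global'' points you away from the strategy that actually succeeds. Your column-$1$ example only rules out a column-by-column injection; the paper's proof \emph{is} a decomposition argument, just at the right granularity. The two missing ideas are: (i) padding to the key diagram --- when $\mathsf{dark}(Key(D))=\mathsf{dark}(D)$, the set $S=Key(D)\backslash D$ has no cell weakly southeast of a cell of $D$, so every $K$-Kohnert move on $T\in KKD(D)$ lifts to $T\cup S$, giving $T\cup S\in KKD(Key(D))$ and hence $\mathrm{MaxG}(D)\le \mathrm{MaxG}(Key(D))=\textup{sf}(Key(D))=\textup{sf}(D)$ by the already-known key-diagram result (Theorem~\ref{thm:JP}); this is Lemma~\ref{lem:shift}, and it is what spares one from reproving a Pan--Yu-type bound from scratch, which your global-invariant plan would effectively require; and (ii) when $\mathsf{dark}(Key(D))\neq\mathsf{dark}(D)$, partitioning the columns into blocks $C_j$ cut at the rows where a dark cloud is followed by a nonempty row with no dark cloud, so that each restriction $\mathrm{Res}(D,C_j)$ satisfies the hypothesis of (i) and the dark clouds of $D$ match those of the blocks with the same counts of empty positions below them; then $\mathrm{MaxG}(D)\le\sum_j\mathrm{MaxG}(\mathrm{Res}(D,C_j))=\sum_j\textup{sf}(\mathrm{Res}(D,C_j))=\textup{sf}(D)$ (Proposition~\ref{prop:JPCons}). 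Without some replacement for these two steps, your upper bound does not go through. (A small further inaccuracy: in your ``parking'' description of $\mathsf{dark}(D)$, a row may claim nothing at all when every column of its interval is already claimed from above, so not every nonempty row carries a dark cloud.)
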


Now, as a first step toward proving Theorem~\ref{thm:shift}, we first show that $\mathrm{MaxG}(D)\ge \text{sf}(D)$. Ongoing, we require the following notation. Given a diagram $D$ with nonempty columns $C=\{c_1<c_2<\hdots<c_n\}$ and $\widehat{C}\subseteq C$, define $$\text{flat}(D)=\bigcup_{i=1}^n\{(r,i)~|~(r,c_i)\in D\}\quad\quad\text{and}\quad\quad \text{Res}(D,\widehat{C})=\bigcup_{i=1}^n\{(r,c)\in D~|~c\in \widehat{C}\};$$ that is, $\text{flat}(D)$ is the diagram formed by left justifying the nonempty columns of $D$, while $Res(D,\widehat{C})$ is the restriction of $D$ to the columns in $\widehat{C}$.



\begin{prop}\label{prop:shiftmaxgesn}
 If $D$ is a generalized skew diagram, then $\mathrm{MaxG}(D)\ge \textup{sf}(D)$.
\end{prop}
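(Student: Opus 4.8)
The plan is to exhibit, for any generalized skew diagram $D$, an explicit sequence of $K$-Kohnert moves producing a diagram in $KKD(D)$ that carries $\text{sf}(D)$ ghost cells. First I would reduce to the case in which $D$ has no empty columns. Since a $K$-Kohnert move acts only within a single column, and the identity of the rightmost cell of a row depends only on the relative left-to-right order of the occupied columns, the operation $\text{flat}$ induces a bijection $KKD(D)\to KKD(\text{flat}(D))$ preserving the number of ghost cells in every diagram; the same observation gives $\text{sf}(\text{flat}(D))=\text{sf}(D)$, and it is immediate from Definition~\ref{def:skew} that $\text{flat}(D)$ is again a generalized skew diagram. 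Hence it suffices to treat $D$ with nonempty columns $1,\dots,m$. I would also record the bookkeeping identity $\text{sf}(D)=\sum_{c=1}^{m}s_c$, where $s_c$ is the number of snowflakes in column $c$: each column contains at most one dark cloud, so every snowflake is unambiguously assigned to the column of the unique dark cloud above it.

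The construction itself I would run on the columns from left to right, banking $s_c$ ghosts inside column $c$ at stage $c$. To bank column $c$, I first apply Kohnert moves to the rows in which column $c$ is occupied, dropping the cells lying strictly to the right of column $c$ so that column $c$ becomes the rightmost cell of each such row; I then apply ghost moves to slide the cells of column $c$ downward into the empty positions below its dark cloud. Here I would use the elementary single-column fact that a cell descending a column leaves exactly one ghost for each empty position it passes, so that the ghost moves at stage $c$ deposit ghost cells only in column $c$. Summing the deposits over $c$ should yield $\sum_c s_c=\text{sf}(D)$ ghosts, and hence $\mathrm{MaxG}(D)\ge \text{sf}(D)$.

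The hard part will be verifying that every move called for above is legal and that the per-column count is exactly $s_c$. A genuine subtlety is that the positions into which column $c$'s cells descend need not be the original snowflakes of $D$: the Kohnert moves used to expose column $c$ vacate positions in columns $>c$, and the slots actually filled at stage $c$ may be among these newly vacated ones. What I really must prove is a conservation statement — that the net number of ghosts deposited in column $c$ equals $s_c$, neither fewer (every snowflake is realizable) nor more (the cost of exposing column $c$ prevents over-banking, which is exactly why $\mathrm{MaxG}(D)$ does not exceed $\text{sf}(D)$). Three legality points must be checked alongside: the clearing Kohnert moves must have landing room; the descent at stage $c$ must not be blocked by a previously banked ghost; and the ghosts deposited in column $c$ must not obstruct a later stage.

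I would control all of this with the two defining inequalities $c_i^-\le c_j^-$ and $c_i^+\le c_j^+$ for $i<j$. Together with row-contiguity they force the occupied rows of each column to form an interval of the row-index set, and they make the greedy dark-cloud matching monotone, so that the dark cloud of column $c$ sits weakly below those of the columns to its right. This staircase of dark clouds is precisely what should guarantee that clearing a right-hand column frees exactly the slot a later stage reuses, and that a ghost banked in column $c$ always lies above all subsequent activity, so that distinct stages neither steal from nor block one another. Pinning down the order in which cells are cleared and reused so that the bookkeeping closes at $s_c$ in every column — rather than leaving a column short or allowing it to over-count — is where the essential work lies.
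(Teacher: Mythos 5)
Your plan coincides with the paper's strategy in outline---process the columns from left to right, expose a column with Kohnert moves, bank ghosts there with ghost moves, and invoke a conservation of per-column snowflake counts---but it stops exactly where the proof has to happen, and you say so yourself (``where the essential work lies''). What is missing is the concrete invariant that the paper supplies by induction on the number of nonempty columns. Letting $c^*$ be the leftmost column containing a dark cloud $(r^*,c^*)$, $\widehat{r}<r^*$ the highest row with $(\widehat{r},c^*)\notin D$, and $N$ the number of cells of row $\widehat{r}+1$ weakly right of $c^*$, the paper applies, for $r=\widehat{r}+1$ down to $2$, $N-1$ Kohnert moves followed by a single ghost move at row $r$. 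This cascade is legal because the monotonicity conditions $c_i^-\le c_j^-$, $c_i^+\le c_j^+$ force every position $(r,c)$ with $r\le\widehat{r}$ and $c\ge c^*$ to be empty, and it deposits exactly $\widehat{r}$ ghosts in column $c^*$, which is the snowflake count of that column. The induction then closes via the restriction lemma: the restriction of the resulting diagram to columns $>c^*$ is either $\mathrm{Res}(D,\{c~|~c>c^*\})$ or that diagram with its lowest nonempty row bottom-justified, and in either case it is again a generalized skew diagram with the \emph{same} dark clouds and the \emph{same} number of empty positions below them; moreover any move sequence on the restriction lifts verbatim to the full diagram, since ghosts in column $c^*$ obstruct only column $c^*$ and never become the rightmost cell selected by a move in a row containing cells further right. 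Your three legality worries (landing room for the clearing moves, blocking by previously banked ghosts, obstruction of later stages) are resolved by precisely this lemma, and you supply neither it nor any substitute; as written, the argument is a program rather than a proof.

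Two smaller points. First, for the lower bound $\mathrm{MaxG}(D)\ge\textup{sf}(D)$ you do not need to rule out ``over-banking''---depositing more than $s_c$ ghosts in a column would only help; the upper bound is a separate argument in the paper (via $Key(D)$ in Lemma~\ref{lem:shift} and Proposition~\ref{prop:JPCons}), so that half of your conservation statement is a red herring here. Second, your ``elementary single-column fact'' is misstated: a ghost move sends the rightmost cell of a row to the \emph{highest} empty position below it, passing no empty positions, and leaves exactly one ghost at the vacated position. Correspondingly, the paper does not slide the cells of column $c^*$ from every occupied row (clearing all rows in which the column is occupied, as you propose, can fail for lack of landing room); instead a single block of cells cascades downward, with a different cell performing one ghost move at each row from $\widehat{r}+1$ down to $2$ after its $N-1$ clearing Kohnert moves.
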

\begin{proof}
    By induction on the number of nonempty columns of $D$. If $D$ has a single nonempty column, then $\text{flat}(D)$ is a key diagram. Considering the definitions of $K$-Kohnert moves and $\mathsf{snow}(D)$, we have that $\mathrm{MaxG}(D)=\mathrm{MaxG}(\text{flat}(D))$ and $\text{sf}(D)=\text{sf}(\text{flat}(D))$. Consequently, applying Theorem~\ref{thm:JP}, the result follows in the case where $D$ has a single nonempty column. Now, assume the result holds for generalized skew diagrams which contain up to $n-1\ge 1$ nonempty columns. Let $D$ be a generalized skew diagram with $n$ nonempty columns. It suffices to show that there exists a sequence of $K$-Kohnert moves that one can apply to $D$ to form $D^*\in KKD(D)$ satisfying $|G(D^*)|=\text{sf}(D)$. To this end, we first define an intermediate diagram $\widehat{D}\in KKD(D)$. Suppose that $c^*$ is minimal such that there exists $r^*$ satisfying $(r^*,c^*)\in \mathsf{dark}(D)$. Note that, since $D$ is a generalized skew diagram, $|Dk(D)|>1$ and $(r,c)\in \mathsf{dark}(D)$ for $c>c^*$ implies $r>r^*$. Assume that there exists $r<r^*$ such that $(r,c^*)\notin D$; the other case following via a similar but simpler argument taking $\widehat{D}=D$. Let $\widehat{r}<r^*$ be maximal such that $(\widehat{r},c^*)\notin D$. Note that, considering the definition of generalized skew diagram, we have 
    \begin{equation}\label{eq:propmxgesn}
        |\{r~|~r\le \widehat{r}~\text{and}~(r,c^*)\notin D\}|=|\{r~|~r< r^*~\text{and}~(r,c^*)\notin D\}|=\widehat{r}
    \end{equation}
    and $(r,c)\notin D$ for $r\le\widehat{r}$ and $c\ge c^*$. Setting $N=|\{c~|~c\ge c^*~\text{and}~(\widehat{r}+1,c)\in D\}|$, where $N\ge 1$ by assumption, define $\widehat{D}$ to be the diagram formed from $D$ as follows. For $r$ satisfying $2\le r\le \widehat{r}+1$ in decreasing order, apply in succession $N-1$ Kohnert moves followed by a single ghost move at row $r$. By construction, $\langle\tilde{r},c\rangle\in\widehat{D}$ for $2\le \tilde{r}\le \widehat{r}+1$; that is, $D$ has $\widehat{r}$ ghost cells in column $c^*$, which, considering (\ref{eq:propmxgesn}), is equal to the number of empty positions below the cell in position $(r^*,c^*)$ of $D$. Thus, in $\widehat{D}$ we have a diagram formed from $D$ which contains the same number of ghost cells in column $c^*$ as snowflakes in column $c^*$ of $\mathsf{snow}(D)$. Considering our choice of $c^*$, it remains to show that there exists a sequence of $K$-Kohnert moves which, when applied to $\widehat{D}$, add as many ghost cells as there are snowflakes in columns $c>c^*$ of $\mathsf{snow}(D)$.
    
    Taking $\widehat{c}$ to be maximal such that column $\widehat{c}$ of $D$ is nonempty and letting $$C=\{c~|~c^*< c\le \widehat{c}\},\quad T=Res(D,C),\quad\text{and}\quad \widehat{T}=Res(\widehat{D},C),$$ we have that $T$ is a generalized skew diagram and $\widehat{T}$ is equal to either $T$ or $T$ with its lowest nonempty row bottom justified. Consequently, $\widehat{T}$ is also a skew diagram. Moreover, since $$(\tilde{r},\tilde{c})\in S=\{(r,c)\in \mathsf{dark}(D)~|~c>c^*\}$$ implies $r>r^*$, it follows that $S=\mathsf{dark}(T)=\mathsf{dark}(\widehat{T})$. Note that in moving from $D$ to $T$ and $\widehat{T}$, the number of empty positions below cells of $S$ was left unchanged. Consequently, applying our induction hypothesis, there exists a sequence of $K$-Kohnert moves one can apply to $\widehat{T}$ to form a diagram $T^*$ such that $$|G(T^*)|=\text{sf}(T^*)=\text{sf}(T).$$ Evidently, applying this same sequence of $K$-Kohnert moves to $\widehat{D}$ results in the desired diagram $D^*$. The result follows. 
\end{proof}

To prove Theorem~\ref{thm:shift}, it remains to show that $\mathrm{MaxG}(D)\le \text{sf}(D)$. As an intermediate step, in Lemma~\ref{lem:shift} below we show that Theorem~\ref{thm:shift} holds for a restricted class of generalized skew diagrams. For the sake of defining the restricted class of interest, given a generalized skew diagram $D$, let $Key(D)$ denote the minimal key diagram containing the cells of $D$. Formally, $$Key(D)=\{(r,c)~|~\exists~\widehat{c}\ge c~\text{such that}~(r,\widehat{c})\in D\}.$$

\begin{lemma}\label{lem:shift}
    Let $D$ be a generalized skew diagram. If $\mathsf{dark}(Key(D))=\mathsf{dark}(D)$, then $\mathrm{MaxG}(D)=\textup{sf}(D)$. 
\end{lemma}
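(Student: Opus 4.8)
The inequality $\mathrm{MaxG}(D)\ge \textup{sf}(D)$ is already supplied by Proposition~\ref{prop:shiftmaxgesn}, so the whole content of the lemma is the reverse bound $\mathrm{MaxG}(D)\le \textup{sf}(D)$. The plan is to route this through the key diagram $Key(D)$, to which Theorem~\ref{thm:JP} applies: since $Key(D)$ fills in only the empty positions lying to the left of each row's leftmost cell, it is literally the key diagram $\mathbb{D}(\alpha)$ whose $r$th part is the column of the rightmost cell of row $r$ of $D$. I would prove the single monotonicity inequality
\[
\mathrm{MaxG}(D)\le \mathrm{MaxG}(Key(D))
\]
and then close the argument with the chain
\[
\textup{sf}(D)\le \mathrm{MaxG}(D)\le \mathrm{MaxG}(Key(D))=\textup{sf}(Key(D))\le \textup{sf}(D),
\]
where the first inequality is Proposition~\ref{prop:shiftmaxgesn}, the middle equality is Theorem~\ref{thm:JP} applied to the key diagram $Key(D)$, and the last inequality is the place where the hypothesis enters. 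As the two ends of the chain are both $\textup{sf}(D)$, every inequality is forced to be an equality, yielding $\mathrm{MaxG}(D)=\textup{sf}(D)$ (and, as a byproduct, $\textup{sf}(Key(D))=\textup{sf}(D)$).

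The step $\textup{sf}(Key(D))\le \textup{sf}(D)$ is the easy half and is exactly where the hypothesis $\mathsf{dark}(Key(D))=\mathsf{dark}(D)$ is used. Because passing from $D$ to $Key(D)$ only adds cells, we have $D\subseteq Key(D)$ and the two diagrams agree weakly above in every column, with the rightmost cell of each row unchanged. Each column carries at most one dark cloud, and the number of snowflakes in a column is the number of empty positions strictly below its dark cloud. By hypothesis the two diagrams have the same dark clouds, so I would simply compare, column by column at each common dark-cloud position, the number of empty cells below it: adding cells can only fill such positions, never create them, giving $\textup{sf}(Key(D))\le \textup{sf}(D)$.

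The crux is the monotonicity statement $\mathrm{MaxG}(D)\le \mathrm{MaxG}(Key(D))$, which asserts that filling the left gaps of a generalized skew diagram cannot decrease the attainable number of ghost cells. The basic structural fact I would lean on is that a $K$-Kohnert move never changes the column of a cell, so the number of non-ghost cells in each column is the same throughout $KKD(D)$; this is what makes a column-wise accounting of ghost cells possible. The natural route is to take a sequence of moves on $D$ realizing $\mathrm{MaxG}(D)$ and to simulate it on $Key(D)$ so as to produce at least as many ghosts. The main obstacle is that the additional cells of $Key(D)$ alter the ``highest empty position below'' and therefore the simulation cannot proceed move-for-move; one must argue that these extra cells, which lie weakly to the left, never obstruct the descents that create ghosts but only furnish further landing positions. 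I expect this to be the technical heart of the proof, and it is precisely where a labeling of the cells of $D$---assigning to each ghost produced a distinct cell of $Key(D)$ accountable for it---provides a clean bookkeeping device; this is the ``promising approach'' advertised in the introduction and developed in full for the lock diagrams treated later in this section.
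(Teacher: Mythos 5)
Your architecture coincides with the paper's: lower bound from Proposition~\ref{prop:shiftmaxgesn}, upper bound routed through $Key(D)$ via Theorem~\ref{thm:JP}, with the hypothesis $\mathsf{dark}(Key(D))=\mathsf{dark}(D)$ entering in the comparison of snowflake counts. (One genuine small improvement: you only need the easy inequality $\textup{sf}(Key(D))\le \textup{sf}(D)$, since your closed chain forces equality; the paper instead proves $\textup{sf}(D)=\textup{sf}(Key(D))$ directly by a contradiction argument using the weak increase of the $c_i^-$, which your bookkeeping renders unnecessary.) However, the proposal has a genuine gap at exactly the step you yourself flag: the inequality $\mathrm{MaxG}(D)\le \mathrm{MaxG}(Key(D))$ is asserted as an expectation, not proved. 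This is the only step of the lemma with content beyond the quoted results, and it cannot be obtained by generic simulation-plus-bookkeeping, because monotonicity of $\mathrm{MaxG}$ under containment is \emph{false} in general: for $D=\{(2,1)\}\subset D'=\{(1,1),(2,1)\}$ one has $\mathrm{MaxG}(D)=1>0=\mathrm{MaxG}(D')$, since the added cell occupies the landing square. So any proof must use the generalized-skew structure in an essential way, and your sketch never identifies what that structural input is.

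Moreover, your stated intuition about the obstacle is misdirected, and seeing why reveals the clean argument the paper uses. Set $S=Key(D)\backslash D$. The weak increase of the leftmost columns $c_i^-$ yields the key fact $(\ast)$: no cell of $S$ lies weakly southeast of a cell of $D$. Consequently the simulation \emph{does} proceed move-for-move, contrary to your worry that the extra cells alter the highest empty position below: one shows by induction that $T\in KKD(D)$ implies $T\cup S\in KKD(Key(D))$, because $\mathcal{K}(T\cup S,r)=\mathcal{K}(T,r)\cup S$ and $\mathcal{G}(T\cup S,r)=\mathcal{G}(T,r)\cup S$ whenever the move on $T$ is nontrivial. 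Indeed, if either identity failed, then a cell of $S$ would be strictly right of the moving cell in row $r$, or would occupy its landing square $(\widehat{r},c)$; tracing the moving cell back to its original position in column $c$ of $D$ (moves only lower cells within their column), either failure places a cell of $S$ weakly southeast of a cell of $D$, contradicting $(\ast)$. In particular the cells of $S$ are inert -- they neither obstruct the relevant descents nor, as you suggest, ``furnish further landing positions'' (they are cells, not vacancies) -- so $|G(T\cup S)|=|G(T)|$ and $\mathrm{MaxG}(D)\le \mathrm{MaxG}(Key(D))$ follows at once. Your alternative of a ghost-accounting labeling might be made to work, but as written it is a placeholder where the paper has a short commutation argument, and without $(\ast)$ (or some equivalent use of generalized skewness) the counterexample above shows the step simply is not available.
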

\begin{proof}
    Assume that the nonempty rows of $D$ are $\{r_i\}_{i=1}^n$, where $r_i<r_{i+1}$ for $i\in [n-1]$ in the case $n>1$. Considering Proposition~\ref{prop:shiftmaxgesn}, it suffices to show that $\mathrm{MaxG}(D)\le \text{sf}(D)$. Recall that $c_i^-=\min\{c~|~(r_i,c)\in D\}$ for $i\in [n]$. To start, we claim that $\text{sf}(D)=\text{sf}(Key(D))$. Assume otherwise. Then since $\mathsf{dark}(Key(D))=\mathsf{dark}(D)$ and $D\subseteq Key(D)$, it must be the case that $\text{sf}(Key(D))<\text{sf}(D)$. Considering the definition of $\text{sf}(-)$, this implies that there exists $i,j\in [n]$ and $c$ such that $i>j$, $(r_j,c)\in Key(D)$, $(r_j,c)\notin D$, and $(r_i,c)\in D\cap Key(D)$. Note that, considering the definition of $Key(D)$, $(r_j,c)\in Key(D)$ and $(r_j,c)\notin D$ together imply that there exists $\tilde{c}>c$ for which $(r_j,\tilde{c})\in D$; but then $c_j^-=\min\{\tilde{c}>c~|~(r_j,\tilde{c})\in D\}>c\ge c_i^-$, which is a contradiction. Thus, the claim follows. 

    Now, consider $S=Key(D)\backslash D$. Note that, by the definition of $Key(D)$ and the fact that $c_i^-\le c_{i+1}^-$ for $i\in [n-1]$ when $n>1$, if $(r,c)\in D$ and $(\tilde{r},\tilde{c})\in S$, then $\tilde{r}\ge r$ and $\tilde{c}\le c$ with at least one inequality strict; that is, $(\ast)$ no cell of $S$ lies weakly southeast of a cell in $D$. We claim that if $T\in KKD(D)$, then $T\cup S\in KKD(Key(D))$. To establish the claim, since $D\cup S=Key(D)\in KKD(Key(D))$, it suffices to show that if $T\in KKD(D)$ satisfies $T\cup S\in KKD(Key(D))$ and $r$ is such that $T\neq \mathcal{K}(T,r)$, then $\mathcal{K}(T,r)\cup S=\mathcal{K}(T\cup S,r)$ and $\mathcal{G}(T,r)\cup S=\mathcal{G}(T\cup S,r)$. Take such a $T\in KKD(D)$ and $r$. Assume that $\mathcal{K}(T,r)\cup S\neq \mathcal{K}(T\cup S,r)$ with $\mathcal{K}(T,r)=T\ldownarrow^{(r,c)}_{(\widehat{r},c)}$. Then either
    \begin{enumerate}
        \item[\textup{(1)}] $(r,c)$ is not rightmost in row $r$ of $T\cup S$ or 
        \item[\textup{(2)}] $(\widehat{r},c)\in T\cup S$.
    \end{enumerate}
    Note that both (1) and (2) imply that there is a cell of $S$ lying weakly southeast of a cell of $D$, contradicting $(\ast)$. A similar argument applies when assuming there exists $T\in KKD(D)$ and $r$ for which $\mathcal{G}(T,r)\cup S\neq \mathcal{G}(T\cup S,r)$. Thus, our claim follows. Consequently, $\mathrm{MaxG}(D)\le \mathrm{MaxG}(Key(D))$ so that, applying Theorem~\ref{thm:JP}, $$\mathrm{MaxG}(D)\le \mathrm{MaxG}(Key(D))=\text{sf}(Key(D))=\text{sf}(D).$$ The result follows.
\end{proof}

To finish the proof Theorem~\ref{thm:shift}, it remains to consider the case where $\mathsf{dark}(Key(D))\neq \mathsf{dark}(D)$. The following proposition allows us to do so.

\begin{prop}\label{prop:JPCons}
    Let $D$ be a diagram that contains no ghost cells. Assume that there exists a partition $\{C_i\}_{i=1}^n$ of the nonempty columns of $D$ such that, for $i\in [n]$,
    \begin{enumerate}
        \item[\textup{(a)}] $F_i=\text{flat}(\text{Res}(D,C_i))$ is either a key diagram or a skew diagram with $\mathsf{dark}(Key(F_i))=\mathsf{dark}(F_i)$;
        \item[\textup{(b)}] for $c\in C_i$, there exists $r$ such that $(r,c)\in \mathsf{dark}(\text{Res}(D,C_i))$ if and only if there exists $\widehat{r}$ such that $(\widehat{r},c)\in \mathsf{dark}(D)$; and
        \item[\textup{(c)}] for $c\in C_i$, if $(r,c)\in \mathsf{dark}(\text{Res}(D,C_i))$ and $(\widehat{r},c)\in \mathsf{dark}(D)$, then $\widehat{r}\le r$ and $(\tilde{r},c)\in D$ for $\widehat{r}\le \tilde{r}\le r$.
    \end{enumerate}
    Then $\mathrm{MaxG}(D)\le \textup{sf}(D)$.  
\end{prop}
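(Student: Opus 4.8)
The plan is to reduce the inequality $\mathrm{MaxG}(D)\le \text{sf}(D)$ to two essentially independent facts: a column-restriction principle that decomposes $\mathrm{MaxG}(D)$ across the blocks $C_i$, and a snowflake-decomposition identity $\text{sf}(D)=\sum_{i=1}^n\text{sf}(\text{Res}(D,C_i))$. For the first, I would prove the general claim that for any set of columns $C$ and any $T\in KKD(D)$ one has $\text{Res}(T,C)\in KKD(\text{Res}(D,C))$, where $\text{Res}$ is understood to retain both ghost and non-ghost cells lying in the columns of $C$. Granting this, since a $K$-Kohnert move never creates a cell in a new column, every ghost of $T$ lies in a unique block $C_i$, so $|G(T)|=\sum_{i=1}^n|G(\text{Res}(T,C_i))|\le \sum_{i=1}^n\mathrm{MaxG}(\text{Res}(D,C_i))$; taking the maximum over $T\in KKD(D)$ yields $\mathrm{MaxG}(D)\le\sum_{i=1}^n\mathrm{MaxG}(\text{Res}(D,C_i))$.

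The restriction principle I would establish by induction on the length of a sequence of $K$-Kohnert moves producing $T$ from $D$. The inductive step reduces to showing that if $\text{Res}(T,C)\in KKD(\text{Res}(D,C))$ and we apply a Kohnert or ghost move at row $r$ of $T$, then the restriction to $C$ of the result lies in $KKD(\text{Res}(T,C))$. The case analysis is on the rightmost cell of row $r$ of $T$, say in column $c$. If $c\notin C$, the move alters only column $c$, so $\text{Res}(\cdot,C)$ is unchanged and the restricted step ``does nothing,'' which is trivially admissible. If $c\in C$, then because $c$ is the globally rightmost occupied column in row $r$ it is also rightmost among columns of $C$; moreover the target position, the blocking conditions involving intervening ghosts, and the do-nothing conditions all depend only on the contents of column $c$, which restriction preserves verbatim. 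Hence the same move applied to $\text{Res}(T,C)$ produces exactly $\text{Res}(\mathcal{K}(T,r),C)$ (resp.\ $\text{Res}(\mathcal{G}(T,r),C)$). This locality of $K$-Kohnert moves to a single column is the crux, and is where I expect the argument to require the most care.

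With the restriction principle in hand, I would invoke hypothesis (a): by Theorem~\ref{thm:JP} (when $F_i$ is a key diagram) or by Lemma~\ref{lem:shift} (when $F_i$ is a skew diagram with $\mathsf{dark}(Key(F_i))=\mathsf{dark}(F_i)$, noting that skew diagrams are generalized skew by Proposition~\ref{prop:skew}), together with the flattening-invariance of $\mathrm{MaxG}$ and $\text{sf}$ already used in the proof of Proposition~\ref{prop:shiftmaxgesn}, one obtains $\mathrm{MaxG}(\text{Res}(D,C_i))=\mathrm{MaxG}(F_i)=\text{sf}(F_i)=\text{sf}(\text{Res}(D,C_i))$. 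It then remains to prove the snowflake-decomposition $\text{sf}(D)=\sum_i\text{sf}(\text{Res}(D,C_i))$, and this is precisely what conditions (b) and (c) deliver, column by column. Fix a column $c\in C_i$: since restriction leaves column $c$ untouched, its occupied positions are identical in $D$ and in $\text{Res}(D,C_i)$; condition (b) guarantees that column $c$ carries a dark cloud in $D$ if and only if it does in $\text{Res}(D,C_i)$; and condition (c) guarantees that when it does, the dark cloud of $D$ sits at a row $\widehat r\le r$ below that of $\text{Res}(D,C_i)$ with all of rows $\widehat r,\dots,r$ filled, so the number of empty positions below the dark cloud---i.e.\ the number of snowflakes contributed by column $c$---agrees in both diagrams. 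Summing over all columns gives the identity, and combining the three displays yields $\mathrm{MaxG}(D)\le\sum_i\text{sf}(\text{Res}(D,C_i))=\text{sf}(D)$.
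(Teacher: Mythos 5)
Your proposal is correct and follows essentially the same route as the paper's proof: bound $\mathrm{MaxG}(D)\le\sum_{i=1}^n\mathrm{MaxG}(\text{Res}(D,C_i))$, evaluate each summand as $\text{sf}(\text{Res}(D,C_i))$ via Theorem~\ref{thm:JP} or Lemma~\ref{lem:shift} together with flattening-invariance under hypothesis (a), and then deduce $\sum_{i=1}^n\text{sf}(\text{Res}(D,C_i))=\text{sf}(D)$ from hypotheses (b) and (c). The only differences are presentational: you supply an explicit induction for the column-restriction inequality, which the paper dismisses as evident, and you verify the snowflake identity column-by-column where the paper packages the same count as a bijection $\phi:\mathsf{dark}(D)\to\mathsf{dark}^+$ preserving the number of empty positions below each dark cloud.
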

\begin{proof}
    Let $D_i=\text{Res}(D,C_i)$ for $i\in [n]$. Note that, considering the definition of $K$-Kohnert move, if a diagram $T$ is related to a diagram $T^*$ by the addition or removal of empty columns, then $\mathrm{MaxG}(T)=\mathrm{MaxG}(T^*)$. Consequently, considering property (a) of $C_i$ and applying either Theorem~\ref{thm:JP} or Lemma~\ref{lem:shift}, we have that $\mathrm{MaxG}(D_i)=\text{sf}(D_i)$ for $i\in [n]$. Now, evidently, $$\mathrm{MaxG}(D)\le \sum_{i=1}^n\mathrm{MaxG}(D_i)= \sum_{i=1}^n\text{sf}(D_i).$$ Thus, to finish the proof, it remains to show that $\sum_{i=1}^n\text{sf}(D_i)=\text{sf}(D)$. Define $$\mathsf{dark}^+=\bigcup_{i=1}^n\{(r,c,i)~|~(r,c)\in \mathsf{dark}(D_i)\}.$$ By property (b), there exists a bijection $\phi:\mathsf{dark}(D)\to \mathsf{dark}^+$. Moreover, as a consequence of a combination of properties (b) and (c), $\phi$ can be defined in such a way that for $(r,c)\in \mathsf{dark}(D)$ we have $\phi((r,c))=(r',c,i)\in \mathsf{dark}(D_i)$ for some $i\in [n]$ and the number of empty positions below $(r,c)$ in $D$ is equal to the number below $(r',c)$ in $D_i$. Since the number of snowflakes in a snow diagram is equal to the total number of empty positions contained below dark clouds, it follows that $\sum_{i=1}^n\text{sf}(D_i)=\text{sf}(D)$.
\end{proof}

\begin{corollary}\label{cor:UB}
    Suppose that $D$ is a diagram that contains no ghost cells for which either
    \begin{itemize}
        \item[\textup{(a)}] each column contains at most one cell or
        \item[\textup{(b)}] there exists $n\in\mathbb{Z}_{>0}$ such that $(r,c)\in D$ if and only if $r,c\in [n]$ and $r+c$ is even \textup(odd\textup).
    \end{itemize}
    Then $\mathrm{MaxG}(D)\le \text{sf}(D)$.
\end{corollary}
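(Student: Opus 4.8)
The plan is to derive both parts from Proposition~\ref{prop:JPCons}: in each case I will exhibit a partition $\{C_i\}$ of the nonempty columns of $D$ satisfying hypotheses (a)--(c) of that proposition, whence the bound $\mathrm{MaxG}(D)\le \text{sf}(D)$ follows at once.

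For part (a), since every column of $D$ contains at most one cell, each nonempty column has a unique cell lying in a unique row, and I would partition the nonempty columns by that row, setting $C_r=\{c~|~(r,c)\in D\}$ for each nonempty row $r$. Then $\text{Res}(D,C_r)$ consists of exactly the cells of row $r$ (a column in $C_r$ meets $D$ only in row $r$), so $F_r=\text{flat}(\text{Res}(D,C_r))$ is a single left-justified row, hence a key diagram, giving (a). Because no column of $D$ has two cells, no cell is ever blocked by a marked cell above it in its own column, so $\mathsf{dark}(D)=R(D)$ is precisely the set of rightmost cells; the unique dark cloud of $\text{Res}(D,C_r)$ is likewise its rightmost cell $\max C_r$, which is also the dark cloud of row $r$ in $D$. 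This matching yields (b), and since the relevant dark cloud sits in the same single row $r$ in both $\text{Res}(D,C_r)$ and $D$, condition (c) holds trivially.

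For part (b), I would partition the nonempty columns by parity into $C_{\text{odd}}$ and $C_{\text{even}}$. The key structural feature is that in a checkerboard diagram every cell $(r,c)$ obeys a fixed parity relation between $r$ and $c$, so the cells of any one row all lie in columns of a single parity; consequently $\text{Res}(D,C_{\text{odd}})$ and $\text{Res}(D,C_{\text{even}})$ are fully filled rectangles (certain rows crossed with the columns of the given parity), each flattening to a rectangular key diagram, which gives (a). The crux is (b) and (c), and here I claim that the dark-cloud marking of $D$ decouples by column parity: since each row's cells occupy columns of one parity and the ``no marked cell above'' test consults only the same column, markings made in odd columns never interact with those made in even columns, so processing $D$ from top to bottom produces exactly the odd-column markings obtained by processing $\text{Res}(D,C_{\text{odd}})$ alone (and symmetrically for the even columns). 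Hence $\mathsf{dark}(D)$ restricted to either parity class agrees, positions and all, with the dark clouds of the corresponding restriction, which gives (b) and makes (c) immediate, as each column's dark cloud then lies in the same row in $D$ and in its restriction.

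I expect the main obstacle to be verifying conditions (b) and (c) — that the dark clouds of the restrictions agree with those of $D$ — rather than (a), which is routine in both cases. In part (a) this agreement is forced by the one-cell-per-column hypothesis, while in part (b) it rests on the parity-decoupling of the marking procedure; once that observation is isolated, a short inductive bookkeeping of the top-to-bottom marking (tracking which columns of each parity have been marked as one descends) makes the matching precise and completes the application of Proposition~\ref{prop:JPCons}.
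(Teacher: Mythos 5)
Your proposal is correct and takes essentially the same route as the paper: the paper's proof applies Proposition~\ref{prop:JPCons} with exactly your partitions, namely $C_i=\{c~|~(r_i,c)\in D\}$ indexed by the nonempty rows for case (a) and the odd/even column classes for case (b), leaving the verification of hypotheses (a)--(c) implicit. Your explicit checks --- that $\mathsf{dark}(D)=R(D)$ under the one-cell-per-column hypothesis, and that the top-to-bottom dark-cloud marking decouples by column parity for checkered diagrams so the dark clouds of $D$ and of each restriction coincide position-for-position --- are correct and merely fill in the details the paper omits.
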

\begin{proof}
    For (a), if $\{r_i\}_{i=1}^n=\{r~|~\exists~c~\text{such that}~(r,c)\in D\}$, then define $C_i=\{c~|~(r_i,c)\in D\}$ for $i\in [n]$. As for (b), define $C_1=\{c\in [n]~|~c~\text{odd}\}$ and $C_2=\{c\in [n]~|~c~\text{even}\}$.
\end{proof}

The inequality in the conclusion of Proposition~\ref{prop:JPCons} can be strict, as illustrated in Example~\ref{ex:strict}.

\begin{example}\label{ex:strict}
    Let $D_1=\{(1,3),(2,2),(3,1)\}$ and $D_2=\{(3,1),(3,2),(3,3),(4,1),(4,3),(5,3)\}$, illustrated below in Figures~\ref{fig:swcounter} \textup(a\textup) and \textup(b\textup), respectively. It is straightforward to verify that for $j\in [2]$, $D_j$ along with $C_i=\{i\}$ for $i\in [3]$ satisfy the hypotheses of Proposition~\ref{prop:JPCons} so that $\mathrm{MaxG}(D_j)\le \textup{sf}(D_j)$. For both $j=1$ and $2$, this inequality is, in fact, strict as one can compute that $\mathrm{MaxG}(D_1)=2<3=\textup{sf}(D_1)$ and $\mathrm{MaxG}(D_2)=5<6=\textup{sf}(D_2)$.
    \begin{figure}[H]
        \centering
        $$\scalebox{0.9}{\begin{tikzpicture}[scale=0.6]
        \node at (0.5, 2.5) {$\cdot$};
        \node at (1.5, 1.5) {$\cdot$};
        \node at (2.5, 0.5) {$\cdot$};
        \draw (0,3.5)--(0,0)--(3.5,0);
        \draw (0,2)--(1,2)--(1,3)--(0,3);
        \draw (1,1)--(2,1)--(2,2)--(1,2)--(1,1);
        \draw (2,0)--(2,1)--(3,1)--(3,0);
        \node at (1.5,-1) {\Large (a)};
\end{tikzpicture}}\quad\quad\quad\quad\quad\quad\quad\quad\scalebox{0.9}{\begin{tikzpicture}[scale=0.6]
        \node at (0.5, 2.5) {$\cdot$};
        \node at (0.5, 3.5) {$\cdot$};
        \node at (1.5, 2.5) {$\cdot$};
        \node at (2.5, 2.5) {$\cdot$};
        \node at (2.5, 3.5) {$\cdot$};
        \node at (2.5, 4.5) {$\cdot$};
        \draw (0,5.5)--(0,0)--(3.5,0);
        \draw (0,2)--(3,2)--(3,5)--(2,5)--(2,3)--(1,3)--(1,4)--(0,4);
        \draw (0,3)--(1,3)--(1,2);
        \draw (2,2)--(2,3)--(3,3);
        \draw (2,4)--(3,4);
        \node at (2,-1) {\Large (b)};
\end{tikzpicture}}$$
        \caption{Diagrams $D$ with $\mathrm{MaxG}(D)<\text{sf}(D)$}
        \label{fig:swcounter}
    \end{figure}
\end{example}

\begin{remark}
Note that the diagrams considered in Corollary~\ref{cor:UB} \textup{(b)} are the ``checkered diagrams" of \textup{\textbf{\cite{KPoset1}}}. The authors of the present article claim that for such diagrams $D$, one has $\mathrm{MaxG}(D)=\textup{sf}(D)$. Considering Corollary~\ref{cor:UB}, to prove that $\mathrm{MaxG}(D)=\textup{sf}(D)$ for checkered diagrams, it suffices to show that there exists $T\in KKD(D)$ with $|G(T)|=\textup{sf}(D)$; for the sake of brevity, the details for constructing such a $T\in KKD(D)$ are left to the interested reader.
\end{remark}

Proposition~\ref{prop:JPCons} in hand, we can now finish the proof of Theorem~\ref{thm:shift}.

\begin{proof}[Proof of Theorem~\ref{thm:shift}]
    Let $\{r_i\}_{i=1}^n$ be the nonempty rows of $D$ where $r_i<r_{i+1}$ for $i\in [n-1]$ when $n>1$. If $\mathsf{dark}(Key(D))=\mathsf{dark}(D)$, then the result follows by Lemma~\ref{lem:shift}. So, assume that $\mathsf{dark}(Key(D))\neq \mathsf{dark}(D)$; note that, as a consequence, $n>1$. We define a partitioning of the columns of $D$ as follows. Let $R\subset \{r_i\}_{i=1}^n$ consist of $r_n$ as well as all of those $r_j$ for $j\in [n-1]$ such that
    \begin{itemize}
        \item row $r_j$ of $\mathsf{snow}(D)$ contains a dark cloud and
        \item row $r_{j+1}$ of $\mathsf{snow}(D)$ contains no dark cloud.
    \end{itemize}
Assume that $R=\{r_{i_j}\}_{j=1}^m$ where $r_j<r_{j+1}$ for $j\in [m-1]$ when $m>1$, and define $c_{i_j}$ for $j\in [m]$ by $(r_{i_j},c_{i_j})\in \mathsf{dark}(D)$. Considering the definitions of $\mathsf{dark}(D)$ and generalized skew diagram, we have that $c_{i_j}<c_{i_{j+1}}$ for $j\in [m-1]$ when $m>1$. Letting $c_{i_0}=0$, define $$C_j=\{c~|~c_{i_{j-1}}<c\le c_{i_j}\}$$ for $j\in [m]$. We claim that $D$ along with $\{C_j\}_{j=1}^m$ satisfy the hypotheses of Proposition~\ref{prop:JPCons} so that $\mathrm{MaxG}(D)\le \text{sf}(D)$. Considering the definitions of generalized skew diagram and the $C_j$, it follows immediately that $D_j=Res(D,C_j)$ is a generalized skew diagram for $j\in [m]$. Thus, to establish the claim, it remains to show that $D$ along with $\{C_j\}_{j=1}^m$ satisfy hypotheses (b) and (c) of Proposition~\ref{prop:JPCons}. To this end, we show that $\mathsf{dark}(D)\cap\{(r,c)~|~c_{i_{j-1}}<c\le c_{i_j},~r\ge 1\}=\mathsf{dark}(Res(D,D_j))$ for $j\in [m]$, from which it follows that both remaining hypotheses are satisfied by $D$ along with $\{C_j\}_{j=1}^m$. Note that $(r_{i_j},c_{i_j})$ must be the top rightmost cell in $Res(D,C_j)$ for $j\in [m]$; this is immediate for $j=m$. If $m>1$ and $(r_{i_j},c_{i_j})$ were not the top rightmost cell in $Res(D,C_j)$ for $j\in [m-1]$, then $(r_{i_j+1},c_{i_j})\in D$ and, since row $r_{i_j+1}$ of $\mathsf{snow}(D)$ contains no dark cloud, there exists $\tilde{r}>r_{i_j+1}$ such that $(\tilde{r},c_{i_j})\in \mathsf{dark}(D)$, contradicting the definition of $c_{i_j}$. Thus, $(r_{i_j},c_{i_j})\in \mathsf{dark}(D)\cap Res(D,C_j)$ for $j\in [m]$. Moreover, it follows that $(r,c)\notin D$ for $r>r_{i_j}$ and $c\le c_{i_j}$. Consequently, $$\mathsf{dark}(D)\cap\{(r,c)~|~c_{i_{j-1}}<c\le c_{i_j}\},~\mathsf{dark}(Res(D,D_j))\subset\{(r,c)~|~r\le r_{i_j},~c\le c_{i_j}\}$$ for $j\in [m]$. Therefore, for $j\in [m]$, $D$ and $\mathsf{dark}(Res(D,D_j))$ have the same restrictions defining positions of dark clouds in columns $c$ satisfying $c_{i_{j-1}}<c\le c_{i_j}$, i.e., $$\mathsf{dark}(D)\cap\{(r,c)~|~c_{i_{j-1}}<c\le c_{i_j}\}=\mathsf{dark}(Res(D,D_j)),$$ establishing the claim. As noted above, we may conclude that $\mathrm{MaxG}(D)\le \text{sf}(D)$. Applying Proposition~\ref{prop:shiftmaxgesn}, the result follows.
\end{proof}

As noted above, Theorem~\ref{thm:skew} follows as an immediate corollary of Theorem~\ref{thm:shift}.

\begin{remark}
    For the diagrams $D$ considered in Theorem~\ref{thm:shift}, there exist weak compositions $\alpha$ and $\beta$ such that $Key(D)=\mathbb{D}(\alpha)$ and $Key(D)\backslash D=\mathbb{D}(\beta)$. In particular, using the notation of Definition~\ref{def:skew}, $$\alpha=(\underbrace{0,\hdots,0}_{r_1-1},\underbrace{c_1^+,\hdots,c_1^+}_{r_2-r_1},\hdots,\underbrace{c_n^+,\hdots,c_n^+}_{r_n-r_{n-1}})\quad\text{and}\quad \beta=(\underbrace{0,\hdots,0}_{r_1-1},\underbrace{c_1^-,\hdots,c_1^-}_{r_2-r_1},\hdots,\underbrace{c_n^-,\hdots,c_n^-}_{r_n-r_{n-1}}).$$ Note that the entries of both $\alpha$ and $\beta$ are weakly increasing from left to right. Thus, it is natural to ask whether Theorem~\ref{thm:shift} holds if one replaces weakly increasing $\alpha$ and $\beta$ by arbitrary weak compositions. As it turns out, as we will see shortly, such a generalization of Theorem~\ref{thm:shift} does not hold in general.
\end{remark}

To end this section, we demonstrate what seems to be a promising approach for establishing further results in the spirit of Theorem~\ref{thm:JP} using labelings. In particular, we apply such an approach to a special class of ``lock diagrams" using an extended version of a labeling defined in \textbf{\cite{KP1}}; the resulting formula for $\mathrm{MaxG}(D)$ requires a variation of $\mathsf{snow}(D)$. Moreover, we indicate how one could extend this approach to more general diagrams.




\begin{definition}
Given a weak composition $\alpha=(\alpha_1,\hdots,\alpha_n)\in\mathbb{Z}_{\ge 0}^n$ and letting $$N=\max(\alpha)=\max\{\alpha_i~|~1\le i\le n\}$$ we define the \textbf{lock diagram} associated with $\alpha$ as $$\rotatebox[origin=c]{180}{$\mathbb{D}$}(\alpha)=\bigcup_{i=1}^n\{(i,N-j)~|~0\le j\le \alpha_i-1\}.$$ Moreover, we define a \textbf{lock tableau} of content $\alpha$ to be a diagram filled with entries $1^{\alpha_1},2^{\alpha_2},\hdots,n^{\alpha_n}$, one per cell, satisfying the following conditions:
\begin{enumerate}
    \item[\textup(i\textup)] if $\alpha_j>0$ for $j\in [n]$, then there is exactly one $j$ in each column from $N-\alpha_j+1$ through $\max(\alpha)$;
    \item[\textup(ii\textup)] each entry in row $r$ is at least $r$;
    \item[\textup(iii\textup)] the cells with entry $j$ weakly descend from left to right; and
    \item[\textup(iv\textup)] the labeling strictly decreases down columns.
\end{enumerate}
\end{definition}

In \textbf{\cite{KP1}}, the authors establish the following relationship between the elements of $KD(\rotatebox[origin=c]{180}{$\mathbb{D}$}(\alpha))$ and lock tableau of content $\alpha$.

\begin{theorem}[Theorem 6.9, \textbf{\cite{KP1}}]\label{thm:label}
    The underling diagrams of lock tableau with content $\alpha$ are exactly the elements of $KD(\rotatebox[origin=c]{180}{$\mathbb{D}$}(\alpha))$.
\end{theorem}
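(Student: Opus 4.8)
The plan is to prove Theorem~\ref{thm:label} by establishing the inclusion in both directions: every diagram in $KD(\rotatebox[origin=c]{180}{$\mathbb{D}$}(\alpha))$ underlies some lock tableau of content $\alpha$, and conversely every diagram that underlies a lock tableau of content $\alpha$ lies in $KD(\rotatebox[origin=c]{180}{$\mathbb{D}$}(\alpha))$. Since this is cited as Theorem 6.9 of \textbf{\cite{KP1}} and concerns only ordinary Kohnert moves (no ghost cells appear, as $KD(-)$ involves only moves of type (1)), the natural strategy is to track a canonical labeling through each Kohnert move and to show that Kohnert moves and the lock-tableau conditions are in exact correspondence.

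First I would set up the canonical labeling of $\rotatebox[origin=c]{180}{$\mathbb{D}$}(\alpha)$ itself: in row $i$ place the entry $i$ in each of its $\alpha_i$ cells (occupying columns $N-\alpha_i+1$ through $N$), and verify directly that this filling satisfies conditions (i)--(iv). Condition (i) holds because the cells with entry $j$ occupy exactly columns $N-\alpha_j+1$ through $N=\max(\alpha)$; condition (ii) holds since every entry in row $i$ equals $i\ge i$; conditions (iii) and (iv) are immediate as each label sits in a single row with no repeats down columns. This establishes the base case. For the forward inclusion, I would argue by induction on the number of Kohnert moves: assuming a diagram $T\in KD(\rotatebox[origin=c]{180}{$\mathbb{D}$}(\alpha))$ carries a labeling satisfying (i)--(iv), I would show that applying a Kohnert move at any row $r$ (which drops the rightmost cell $(r,c)$ to the highest empty position $(\widehat{r},c)$ below it) produces a diagram still admitting such a labeling, by carrying the label of the moved cell down with it. The key verification is that (ii)--(iv) are preserved: condition (ii) because $\widehat{r}<r$ so the entry, which was $\ge r$, remains $\ge r>\widehat{r}$; conditions (iii) and (iv) require checking that descending the cell does not violate the weak-descent or strict-column-decrease constraints, using that $(r,c)$ was rightmost and $(\widehat{r},c)$ was the highest empty slot.

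For the reverse inclusion — showing every lock tableau's underlying diagram is reachable by Kohnert moves — I would proceed by a downward induction or an extremality argument. The plan is to take an arbitrary lock tableau $T$ of content $\alpha$ whose underlying diagram is not $\rotatebox[origin=c]{180}{$\mathbb{D}$}(\alpha)$ and exhibit a cell that could be ``lifted'' by the reverse of a Kohnert move, so that $T$ is obtained from a diagram with strictly fewer total downward displacements; iterating drives the diagram back to $\rotatebox[origin=c]{180}{$\mathbb{D}$}(\alpha)$. Concretely, I would locate the lowest cell sitting below its canonical position and argue, using conditions (i)--(iv), that lifting it to the lowest empty position above in its column yields a valid lock tableau and is the undo of a legitimate Kohnert move; the monotonicity conditions (iii) and (iv) are exactly what guarantee the intermediate positions are empty and that the move is reversible. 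Alternatively, one can invoke the framework of \textbf{\cite{KP1}} directly, but since the theorem is quoted verbatim, the cleanest self-contained route is the two-sided induction sketched above.

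The main obstacle will be the reverse inclusion, and specifically the bookkeeping in the ``undo'' step: one must show not merely that some cell can be raised, but that raising the correctly chosen cell preserves all four tableau conditions simultaneously and genuinely corresponds to reversing a Kohnert move (i.e.\ the raised cell lands in the \emph{highest} empty slot and was the \emph{rightmost} in its target row). The subtlety is that conditions (iii) (weak descent of each label left-to-right) and (iv) (strict decrease down columns) interact, so the choice of which cell to lift must be made carefully — typically the rightmost cell in the highest row that is displaced below its canonical column — and one must rule out configurations where no such lift is available while the diagram still differs from $\rotatebox[origin=c]{180}{$\mathbb{D}$}(\alpha)$. Establishing that such a displaced-but-unliftable configuration cannot occur, using the fixed content and the column-occupancy condition (i), is where the real work lies.
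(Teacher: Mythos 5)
There is nothing in this paper to compare your proof against: the statement is quoted verbatim as Theorem 6.9 of \textbf{\cite{KP1}} and is used as an imported black box, so the only question is whether your sketch would stand on its own. It would not, because the labeling rule driving your forward induction is wrong. You propose ``carrying the label of the moved cell down with it,'' but this breaks condition (iv) whenever a Kohnert move drops a cell past occupied cells in its column. Concretely, take $\alpha=(0,1,2)$, so $\rotatebox[origin=c]{180}{$\mathbb{D}$}(\alpha)=\{(2,2),(3,1),(3,2)\}$ with canonical labels $2$ at $(2,2)$ and $3$ at $(3,1),(3,2)$. The Kohnert move at row $3$ sends $(3,2)$ past the occupied position $(2,2)$ to the highest empty position $(1,2)$; carrying its label yields column $2$ reading $2$ at row $2$ above $3$ at row $1$, so the labels fail to strictly decrease down the column. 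The correct bookkeeping is to rotate labels within the affected column segment, so that the landing cell inherits the label of the cell directly above it and the sorted multiset of labels in each column is preserved; this is exactly what the recursive labeling in the final remark of Section~\ref{sec:extJP} encodes via its rule $L_T(r^*-j,c^*)=L_{T^*}(r^*-j+1,c^*)$ for $1\le j\le k$, and it is the mechanism in \textbf{\cite{KP1}}. With the rotation rule your verification of (ii) actually simplifies, since each relabeled cell inherits a label from a strictly higher row, but with your carried-label rule the induction collapses at the first move that passes a cell, as above.

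The reverse inclusion, which you correctly flag as the hard part, is left as a plan rather than a proof: the assertion that the ``rightmost cell in the highest displaced row'' can always be lifted, and that no displaced-but-unliftable lock tableau exists, is essentially the whole content of the theorem in that direction, and your proposal gives no argument ruling out such configurations beyond saying conditions (i)--(iv) should do it. Note also that any lift must be paired with the inverse of the label rotation just described (re-sorting labels within the column), and one must verify both that the lifted filling is again a lock tableau and that the lift inverts a genuine Kohnert move, i.e., the raised cell becomes rightmost in its new row with all intermediate positions in its column occupied. As written, the proposal establishes neither inclusion; the forward direction needs the corrected labeling, and the reverse direction needs the extremality argument to be carried out in full rather than sketched.
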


As a consequence of Theorem~\ref{thm:label}, for a lock diagram $D$, we can think of the cells of $T\in KD(D)$ as having a natural labeling, i.e., that of the associated lock tableau. Since $T\in KKD(D)$ implies that $T\cap\{(r,c)~|~r>0,~c>0\}\in KD(D)$, the non-ghost cells of $KKD(D)$ inherit the labeling obtained via Theorem~\ref{thm:label}. Ongoing, we denote the label of a cell $(r,c)\in T$ for $T\in KKD(D)$ as $L_T(r,c)$; our notation does not make $D$ explicit as this will be clear from context. We extend the labeling obtained via Theorem~\ref{thm:label} to all cells of $T\in KKD(D)$ as follows. If $\langle r,c\rangle\in T$ and $r^*\le r$ is maximal such that $(r^*,c)\in T$, then $L_T(r,c)=L_T(r^*,c)$.

\begin{example}
In Figure~\ref{fig:locklab} \textup{(b)} and \textup{(c)} below, we illustrate the labelings of the diagrams $T_1,T_2\in KKD(D)$ for $D=\rotatebox[origin=c]{180}{$\mathbb{D}$}(0,0,3,2,3)$ the lock diagram of Figure~\ref{fig:locklab} \textup{(a)}. The labels of the ghost cells in Figure~\ref{fig:locklab} \textup{(c)} are placed in the top right corners of the cells.
\begin{figure}[H]
    \centering
    $$\scalebox{0.8}{\begin{tikzpicture}[scale=0.65]
        \node at (0.5,2.5) {$\cdot$};
        \node at (0.5,4.5) {$\cdot$};
        \node at (1.5,2.5) {$\cdot$};
        \node at (1.5,3.5) {$\cdot$};
        \node at (1.5,4.5) {$\cdot$};
        \node at (2.5,2.5) {$\cdot$};
        \node at (2.5,3.5) {$\cdot$};
        \node at (2.5,4.5) {$\cdot$};
        \draw (0,5.5)--(0,0)--(3.5,0);
        \draw (0,2)--(3,2)--(3,3)--(0,3);
        \draw (1,2)--(1,3);
        \draw (2,2)--(2,3);
        \draw (1,3)--(1,4);
        \draw (2,3)--(2,4);
        \draw (3,3)--(3,4);
        \draw (0,4)--(3,4)--(3,5)--(0,5);
        \draw (1,4)--(1,5);
        \draw (2,4)--(2,5);
        \node at (1.5,-1) {\Large (a)};
    \end{tikzpicture}}\quad\quad\quad \scalebox{0.8}{\begin{tikzpicture}[scale=0.65]
        \node at (1.5,0.5) {$3$};
        \node at (2.5,0.5) {$3$};
        \node at (2.5,1.5) {$4$};
        \node at (0.5, 2.5) {$3$};
        \node at (1.5, 2.5) {$4$};
        \node at (2.5, 2.5) {$5$};
        \node at (0.5, 4.5) {$5$};
        \node at (1.5, 4.5) {$5$};
        \draw (0,5.5)--(0,0)--(3.5,0);
        \draw (1,0)--(1,1)--(3,1)--(3,0);
        \draw (2,0)--(2,1);
        \draw (2,1)--(2,3)--(3,3)--(3,1);
        \draw (2,2)--(3,2);
        \draw (0,2)--(2,2);
        \draw (1,2)--(1,3);
        \draw (0,3)--(2,3);
        \draw (0,4)--(2,4)--(2,5)--(0,5);
        \draw (1,4)--(1,5);
        \node at (1.5,-1) {\Large (b)};
    \end{tikzpicture}}\quad\quad\quad \scalebox{0.8}{\begin{tikzpicture}[scale=0.65]
        \node at (1.5,0.5) {$3$};
        \node at (2.5,0.5) {$3$};
        \node at (2.5,1.5) {$4$};
        \node at (0.5, 2.5) {$3$};
        \node at (1.5, 2.5) {$4$};
        \node at (2.5, 2.5) {$5$};
        \node at (0.5, 4.5) {$5$};
        \node at (1.5, 4.5) {$5$};
        \draw (0,5.5)--(0,0)--(3.5,0);
        \draw (1,0)--(1,1)--(3,1)--(3,0);
        \draw (2,0)--(2,1);
        \draw (2,1)--(2,3)--(3,3)--(3,1);
        \draw (2,2)--(3,2);
        \draw (0,2)--(2,2);
        \draw (1,2)--(1,3);
        \draw (0,3)--(2,3);
        \draw (0,4)--(2,4)--(2,5)--(0,5);
        \draw (1,4)--(1,5);
        \draw (1,1)--(1,2);
        \node at (1.5, 1.5) {$\bigtimes^3$};
        \draw (1,3)--(1,4);
        \draw (2,3)--(2,4);
        \node at (1.5, 3.5) {$\bigtimes^4$};
        \draw (2,3)--(2,4)--(3,4)--(3,3);
        \node at (2.5, 3.5) {$\bigtimes^5$};
        \node at (1.5,-1) {\Large (c)};
    \end{tikzpicture}}$$
    \caption{Labelings}
    \label{fig:locklab}
\end{figure}
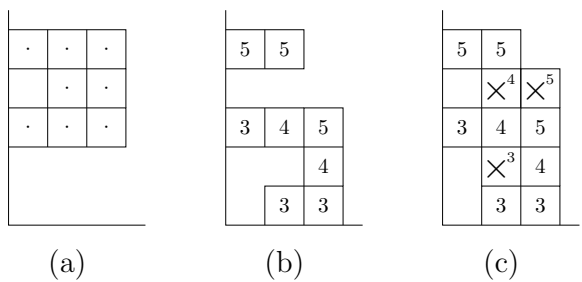
\end{example}

Given a diagram $D$, define $\mathsf{snow}^*(D)$ to be the diagram formed form $\mathsf{snow}(D)$ by removing any snowflakes in nonempty rows of $D$. For lock diagrams $D$, using our labeling of diagrams in $KKD(D)$, we establish the following.

\begin{theorem}\label{thm:lockmain}
    Let $D$ be a lock diagram with nonempty rows $\{r_i\}_{i=1}^n$ where $r_i<r_{i+1}$ for $i\in [n-1]$ when $n>1$. Suppose that there exists at most one $r_k\in\{r_i~|~r_i>i\}$ such that row $r_k$ of $\mathsf{snow}(D)$ contains no dark clouds. Then $\mathrm{MaxG}(D)$ is equal to the number of snowflakes in $\mathsf{snow}^*(D)$.
\end{theorem}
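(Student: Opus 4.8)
Write $\textup{sf}^*(D)$ for the number of snowflakes in $\mathsf{snow}^*(D)$. The plan is to prove the two inequalities $\mathrm{MaxG}(D)\ge \textup{sf}^*(D)$ and $\mathrm{MaxG}(D)\le \textup{sf}^*(D)$ separately, after first recording a structural fact that pins down which snowflakes survive the passage from $\mathsf{snow}(D)$ to $\mathsf{snow}^*(D)$. Since $D$ is a lock diagram, a short greedy argument shows that its dark clouds form a staircase whose columns strictly decrease as the rows decrease. Consequently, for a nonempty row $r_i$ with leftmost cell in column $c_i^-$, every dark cloud sitting in a column to the left of $c_i^-$ occurs in a row below $r_i$, while every column weakly to the right of $c_i^-$ is occupied in row $r_i$; hence a nonempty row that carries a dark cloud contains no snowflake at all. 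Therefore the snowflakes deleted in forming $\mathsf{snow}^*(D)$ are exactly those lying in the nonempty rows that carry no dark cloud, and by hypothesis at most one such row occurs among $\{r_i~|~r_i>i\}$. I would establish this reduction first, since it both drives the lower-bound construction and isolates the single row the upper bound must treat by hand.

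For the lower bound I would mimic the column-by-column construction of Proposition~\ref{prop:shiftmaxgesn}: sweeping from the leftmost dark-cloud column to the right, I drop each dark-cloud cell by a run of Kohnert moves and then record a ghost with a single ghost move, producing one ghost for every snowflake beneath that dark cloud. To land in $\mathsf{snow}^*(D)$ rather than $\mathsf{snow}(D)$, the sweep deposits ghosts only for the below-dark-cloud empty positions lying in empty rows, bypassing the at-most-one exceptional row supplied by the reduction. The strict-staircase property guarantees that cells in columns further right never block these drops, so the resulting $T\in KKD(D)$ satisfies $|G(T)|=\textup{sf}^*(D)$.

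The core of the proof is the upper bound, where I would use the labeling $L_T$ furnished by Theorem~\ref{thm:label}. Fix $T\in KKD(D)$; its non-ghost part is a lock tableau, so in each column $c$ the labels strictly decrease downward (condition (iv)), every entry in row $r$ is at least $r$ (condition (ii)), and the label-$j$ cells always occupy columns $N-\alpha_j+1,\dots,N$. Grouping the ghosts of column $c$ by their inherited label collects each label into a single gap directly above its label-$j$ cell (by (iv)), and condition (ii) caps how high that cell can have been pushed, so the vacated gap maps injectively into the empty positions beneath the dark cloud of $c$. The decisive point is that a ghost placed in a nonempty row consumes drop capacity instead of creating a new vacancy, so the ghosts realizable in column $c$ inject into precisely the below-dark-cloud empty positions that lie in empty rows, which is the column-$c$ count of $\mathsf{snow}^*(D)$. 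Summing over columns yields $\mathrm{MaxG}(D)\le \textup{sf}^*(D)$, and combining with the lower bound proves the theorem.

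The main obstacle I anticipate is the interaction of this charging scheme with the single nonempty dark-cloud-free row allowed by the hypothesis. At that row the staircase is interrupted, so the tidy ``one gap per label'' bookkeeping can break down, and one must check directly that every ghost attributable to that row is already counted by a surviving snowflake of $\mathsf{snow}^*(D)$. This is exactly where the hypothesis is used: with two or more such rows the stacked empty regions created by condition (ii) could be chained to route additional ghosts past the dark clouds and overshoot $\textup{sf}^*(D)$, so the at-most-one assumption is what keeps the injection well defined. Handling this lone exceptional row carefully, rather than the routine staircase rows, is where I expect the real work to lie.
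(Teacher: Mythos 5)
Your upper bound rests on a column-by-column charging scheme: you assert that the ghosts realizable in a fixed column $c$ inject into the below-dark-cloud empty positions of column $c$ lying in empty rows, and then you sum over columns. This per-column bound is false for lock diagrams. Take $\alpha=(0,3,1)$, so $D=\{(2,1),(2,2),(2,3),(3,3)\}$. Here $\mathsf{snow}(D)$ has dark clouds at $(3,3)$ and $(2,2)$, and column $1$ contains neither a dark cloud nor a snowflake, so your scheme allots column $1$ a quota of zero ghosts. But applying Kohnert moves at row $2$ twice sends $(2,3)\mapsto(1,3)$ and $(2,2)\mapsto(1,2)$, after which a ghost move at row $2$ produces the ghost cell $\langle 2,1\rangle$ in column $1$. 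The total count still comes out right ($\mathrm{MaxG}(D)=2$, realized by $\langle 2,1\rangle$ and $\langle 3,3\rangle$, while column $2$ ends up under its quota), which is exactly the point: ghosts can migrate across columns relative to the snowflake positions, so no column-local injection can close the argument. The paper avoids this by counting per \emph{label} rather than per column: using Lemma~\ref{lem:colorp} it shows every cell or ghost labeled $r_i$ lies weakly above row $i$ (ghosts strictly above), and at most one ghost per label can occupy a given row (parts (g) and (h)), giving at most $r_i-i$ ghosts with label $r_i$ regardless of which columns they land in. Your label-based remarks (conditions (ii) and (iv) of lock tableaux) point in this direction, but you then re-localize to columns, which is where the argument breaks.

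Your treatment of the exceptional dark-cloud-free row $r_k$ also stops at the statement of the difficulty. You say one must ``check directly that every ghost attributable to that row is already counted,'' but the hypothesis-using mechanism is precisely the missing content: the paper pairs each ghost labeled $r_k$ in a diagram $T$ with a tuple $(r_i,j)$, $i\neq k$, $i<j\le r_i$, such that row $j$ of $T$ carries no ghost labeled $r_i$, and the pairing is built by a diagonal chaining argument through Lemma~\ref{lem:colorp}(i): if rows $r+1,r+2,\dots$ all carry ghosts with labels $r_{k+1},r_{k+2},\dots$, those ghosts are forced into strictly increasing columns, and since the label-$r_k$ ghost sits in one of the rightmost $m_k$ columns the chain must terminate within $N-j+1$ steps at a missing slot, with the inequality $k+m_k\le n$ (which encodes that row $r_k$ has no dark cloud) guaranteeing the paired label exists. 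Without such a charging argument the at-most-one hypothesis does no work in your write-up. Separately, your lower bound is underspecified --- a single ghost move creates one ghost, not ``one ghost for every snowflake beneath that dark cloud,'' so you need the paper's cascade (for each $i$, a run of Kohnert moves followed by a ghost move at each of rows $r_i$ down to $i+1$, ordered so earlier ghosts never block later moves) --- but that part is repairable in spirit; the column-wise upper bound is the genuine gap.
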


\begin{remark}
    There exist diagrams considered in Theorem~\ref{thm:lockmain} for which $\mathrm{MaxG}(D)<\textup{sf}(D)$. For example, this is the case for $D=\rotatebox[origin=c]{180}{$\mathbb{D}$}(\alpha)$ in Example~\ref{ex:whsd} below.
\end{remark}

\begin{example}\label{ex:whsd}
For $\alpha=(0,4,0,2,3,2,1)$ and $D=\rotatebox[origin=c]{180}{$\mathbb{D}$}(\alpha)$, the diagram $\mathsf{snow}^*(D)$ is illustrated in Figure~\ref{fig:whsd}. Applying Theorem~\ref{thm:lockmain} we find that $\mathrm{MaxG}(D)=7<8=\textup{sf}(D)$.
\begin{figure}[H]
    \centering
    $$\scalebox{0.8}{\begin{tikzpicture}[scale=0.65]
        \node at (3.5,6.5) {$\bullet$};
        \node at (2.5,5.5) {$\bullet$};
        \node at (1.5,4.5) {$\bullet$};
        \node at (0.5,1.5) {$\bullet$};
        \node at (1.5,2.5) {\scalebox{1.5}{$\ast$}};
        \node at (2.5,2.5) {\scalebox{1.5}{$\ast$}};
        \node at (3.5,2.5) {\scalebox{1.5}{$\ast$}};
        \node at (0.5,0.5) {\scalebox{1.5}{$\ast$}};
        \node at (1.5,0.5) {\scalebox{1.5}{$\ast$}};
        \node at (2.5,0.5) {\scalebox{1.5}{$\ast$}};
        \node at (3.5,0.5) {\scalebox{1.5}{$\ast$}};
        \draw (0,7.5)--(0,0)--(4.5,0);
        \draw (0,1)--(4,1)--(4,2)--(0,2);
        \draw (1,1)--(1,2);
        \draw (2,1)--(2,2);
        \draw (3,1)--(3,2);
        \draw (2,3)--(4,3)--(4,4)--(2,4)--(2,3);
        \draw (3,3)--(3,4);
        \draw (4,4)--(4,5)--(1,5)--(1,4)--(2,4);
        \draw (2,4)--(2,5);
        \draw (3,4)--(3,5);
        \draw (4,5)--(4,6)--(2,6)--(2,5);
        \draw (3,5)--(3,6);
        \draw (4,6)--(4,7)--(3,7)--(3,6);
    \end{tikzpicture}}$$
    \caption{$\mathsf{snow}^*(D)$}
    \label{fig:whsd}
\end{figure}
\end{example}

For the proof of Theorem~\ref{thm:lockmain}, we require the following lemma which records a number of results concerning our labeling of cells in diagrams $T\in KKD(D)$ where $D$ is a lock diagram. As the corresponding notation proves to be useful, we note that one can define a poset structure on $KKD(D)$ where for $D_1,D_2\in KKD(D)$ we say $D_2\prec D_1$ if $D_2$ can be obtained from $D_1$ by applying some sequence of $K$-Kohnert moves. In the case that $D_1$ covers $D_2$ in this ordering, i.e., $D_2\prec D_1$ and there exists no $\tilde{D}\in KKD(D)$ such that $D_2\prec\tilde{D}\prec D_1$, then we write $D_2\precdot D_1$. Note that if $D_2\precdot D_1$, then $D_2$ can be formed from $D_1$ by applying a single $K$-Kohnert move.

\begin{lemma}\label{lem:colorp}
    Let $D$ be a lock diagram and $T\in KKD(D)$.
    \begin{enumerate}
        \item[\textup{(a)}] If column $c$ of $D$ is nonempty, then $\{r~|~(r,c)\in D\}=\{L_T(r,c)~|~(r,c)\in T\}$.

        \item[\textup{(b)}] If $T\in KKD(D)$ and $(r,c)\in T$, then $r\le L_T(r,c)$.

        \item[\textup{(c)}] If $(r_1,c),(r_2,c)\in T$ with $r_1<r_2$, then $L_T(r_1,c)<L_T(r_2,c)$.

        \item[\textup{(d)}] If $(r_1,c_1),(r_2,c_2)\in T$ and $L_T(r_1,c_1)=L_T(r_2,c_2)$, then $c_1\neq c_2$. Moreover, if $c_1<c_2$, then $r_1\ge r_2$.

        \item[\textup{(e)}] If $\langle r,c\rangle\in T$, then $\langle r,c\rangle\in \tilde{T}$ and $L_{\tilde{T}}(r,c)=L_T(r,c)$ for all $\tilde{T}\prec T$.

        \item[\textup{(f)}] If $\widehat{T}=\mathcal{G}(T,r)=T\ldownarrow^{(r,c)}_{(\widehat{r},c)}\cup\{\langle r,c\rangle\}$, then $L_{\widehat{T}}(r,c)=L_T(r,c)$.

        \item[\textup{(g)}] If $\langle r,c\rangle\in T$, then $r\le L_T(r,c)$.


        \item[\textup{(h)}] If $\langle r,c_1\rangle,\langle r,c_2\rangle\in T$ with $c_1<c_2$, then $L_T(r,c_1)<L_T(r,c_2)$.

        \item[\textup{(i)}] If $\langle r,c_1\rangle,\langle r+1,c_2\rangle\in T$ with $L_T(r,c_1)<L_T(r+1,c_2)$, then $c_1<c_2$.
    \end{enumerate}
\end{lemma}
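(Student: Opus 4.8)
The plan is to split the nine claims into those about non-ghost cells, parts (a)--(d), which I would read off from the lock-tableau structure, and those about ghost cells, parts (e)--(i), which I would control by tracking labels dynamically as cells move.

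For the setup, note that the straight lock diagram $D$ itself is the underlying diagram of a lock tableau in which the cell $(i,c)$ carries the label $i$; this is forced by conditions (i) and (iv). Since the non-ghost part $T\cap\{(r,c)\mid r,c>0\}$ of any $T\in KKD(D)$ lies in $KD(D)$, Theorem~\ref{thm:label} identifies it with a lock tableau whose filling is exactly $L_T$ on non-ghost cells. A Kohnert or ghost move keeps a non-ghost cell in its own column, so the set of labels appearing in a fixed column is invariant along $KD(D)$; applied to column $c$ this invariance, together with the base labeling of $D$, gives (a). Parts (b), (c), (d) are then direct transcriptions of the defining conditions of a lock tableau: (b) is condition (ii); (c) is the strict decrease down columns, condition (iv); and (d) combines (iv) (distinct labels within a column) with (iii) (cells of equal label weakly descend from left to right).

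For the ghost cells I would adopt the viewpoint that a ghost records the label of the non-ghost cell that vacated its position at the instant the ghost move was performed. Part (f) is precisely this bookkeeping for a single ghost move: I would check that the positional rule $L_{\widehat T}(r,c)=L_{\widehat T}(r^*,c)$ selects that vacating cell. Because ghost cells are fixed by every $K$-Kohnert move and block the column above them, this recorded label is never disturbed by later moves, which yields (e) by induction along the covering relation $\precdot$. Part (g) then follows by combining (e) and (f) with (b): the ghost's label is the value some non-ghost cell carried while occupying row $r$, and by (b) that value is at least $r$.

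The real content is in (h) and (i), and here the key dynamical fact is that a $K$-Kohnert move acts on the rightmost cell of a row, that ghost cells count when locating this rightmost cell, and that a ghost which is rightmost freezes its entire row. Consequently, given $\langle r,c_1\rangle,\langle r,c_2\rangle\in T$ with $c_1<c_2$, the left-hand ghost must have been created first, for otherwise the right-hand ghost would have blocked any later move in row $r$. At the moment the right-hand ghost is produced, its vacating cell sits at $(r,c_2)$ with label $\ell_2$, while the unique non-ghost cell carrying $\ell_1$ in column $c_1$ already lies strictly below row $r$. I would then feed this into (a), (c), and (d): equality $\ell_1=\ell_2$ is impossible since (d) forces equal labels to weakly descend rightward, whereas $\ell_1>\ell_2$ would, via (a), place an $\ell_1$-labeled cell strictly above $(r,c_2)$ in column $c_2$ (by (c)), and this cell together with the $\ell_1$-cell in column $c_1$ would violate (d). Hence $\ell_1<\ell_2$, giving (h); part (i) I would handle by the same blend of forced creation order and the column/row monotonicity in (a), (c), (d). I expect the main obstacle to be exactly this ghost bookkeeping: reconciling the positional definition of a ghost's label with the creation-time label under moves that drop a cell past intervening non-ghost cells, and then organizing the forced order of ghost creation so that the monotonicity facts can be applied. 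Everything else is routine transcription.
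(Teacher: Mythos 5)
Your treatment of parts (a)--(h) is essentially the paper's own argument. Parts (a)--(d) are, as you say, direct transcriptions of the lock-tableau conditions via Theorem~\ref{thm:label} (the paper simply cites them as properties (i)--(iv) of the labeling of \textbf{\cite{KP1}}); your (e)--(g) bookkeeping is the paper's, where your claim that the recorded label is ``never disturbed'' is made precise by observing that the number of non-ghost cells below a ghost in its column is invariant under $K$-Kohnert moves, so by (a) and (c) the set of their labels, hence the maximum, is invariant. Your proof of (h) also matches: the left ghost must be created first because a ghost at $\langle r,c_2\rangle$ prevents $(r,c_1)$ from ever being rightmost in row $r$ (your stated reason, that the right ghost freezes the whole row, is slightly off -- a non-ghost cell to the right of $c_2$ could still move -- but the conclusion is correct), and then (a), (c), (d) force $\ell_1<\ell_2$; the paper reaches the same conclusion by noting the $\ell_1$-labeled cell in column $c_2$ lies strictly below row $r$ and applying (c) directly rather than by your two-case exclusion. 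One point both you and the paper leave implicit and you should state: the existence of an $\ell_1$-labeled cell in column $c_2$ uses (a) together with the lock-diagram fact that the row set of column $c_1$ is contained in that of column $c_2$ whenever $c_1<c_2$.

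The one genuine gap is part (i), which you assert but do not prove, saying it follows ``by the same blend of forced creation order and the column/row monotonicity.'' The creation-order half of that hint fails here: the two ghosts sit in \emph{different} rows $r$ and $r+1$, and a ghost in one row does not obstruct a later ghost move in the other, so neither creation order is forced -- the mechanism that drove (h) is simply unavailable. The statement must instead be proved statically, which is what the paper does: by (c), $L_T(r,c_1)$ is the maximum label among non-ghost cells of column $c_1$ in rows $\le r$, so any non-ghost cell carrying the larger label $L_T(r+1,c_2)$ in column $c_1$ lies strictly above row $r$, and by (d) (again using (a) and column nesting) the same holds in every column $\tilde{c}\le c_1$; on the other hand, the positional labeling forces $\langle r+1,c_2\rangle$ to inherit its label from a non-ghost cell in rows $\le r$ of column $c_2$, so $c_2\le c_1$ is impossible. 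All the needed ingredients are in your toolkit -- (c), (d), and the positional definition of ghost labels -- but as written part (i) is a gesture, and the gesture points partly in the wrong direction.
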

\begin{proof}
(a), (b), (c), and (d) are properties (i), (ii), (iii), and (iv), respectively, of the labeling defined in \textbf{\cite{KP1}} for diagrams $T\in KD(D)\subseteq KKD(D)$ expressed using the notation $L_T(r,c)$; these properties evidently carry over to our extended labeling for diagrams $T\in KKD(D)$.    

(e) The fact that $\langle r,c\rangle\in \tilde{T}$ for all $\tilde{T}\prec T$ follows by the definition of $K$-Kohnert move. As for the label of $\langle r,c\rangle$, once again considering the definition of $K$-Kohnert move, it follows that $$|\{(\tilde{r},c)\in T~|~\tilde{r}<r\}|=|\{(\tilde{r},c)\in \tilde{T}~|~\tilde{r}<r\}|$$ for all $\tilde{T}\prec T$, i.e., the number of non-ghost cells lying below a ghost cell is preserved by $K$-Kohnert moves. Thus, combining parts (a) and (c), it follows that $$\{L_T(\tilde{r},c)~|~(\tilde{r},c)\in T,~\tilde{r}<r\}=\{L_{\tilde{T}}(\tilde{r},c)~|~(\tilde{r},c)\in \tilde{T},~\tilde{r}<r\}$$ for all $\tilde{T}\prec T$, i.e., the set of labels of non-ghost cells lying below a ghost cell is preserved by $K$-Kohnert moves. The result follows.

(f) Considering the definition of ghost move and combining parts (a) and (c), it follows that $$\{L_T(\tilde{r},c)~|~(\tilde{r},c)\in T,~\tilde{r}\le r\}=\{L_{\widehat{T}}(\tilde{r},c)~|~(\tilde{r},c)\in \widehat{T},~\tilde{r}\le r-1\}.$$ Thus, applying (c) once again, we have that $$L_T(r,c)=\max\{L_T(\tilde{r},c)\in T~|~(\tilde{r},c)\in T,~\tilde{r}\le r\}=\max\{L_{\widehat{T}}(\tilde{r},c)~|~(\tilde{r},c)\in \widehat{T},~\tilde{r}\le r-1\}=L_{\widehat{T}}(r,c),$$ as desired.

(g) Combining (e) and (f), it follows that $\langle r,c\rangle$ receives its labeling from a non-ghost cell occupying position $(r,c)$. Thus, applying part (b), the result follows.


(h) Considering the definition of $K$-Kohnert move, it follows that there must exist $T_1,T_2,T_3,T_4\in KKD(D)$ such that 
\begin{itemize}
    \item $T\preceq T_4\precdot T_3\prec T_2\precdot T_1\prec D$,
    \item $(r,c_1)\in T_1$ and $\langle r,c_2\rangle\notin T_1$,
    \item there exists $\widehat{r}_1$ such that $T_2=\mathcal{G}(T_1,r)=T_1\ldownarrow^{(r,c_1)}_{(\widehat{r}_1,c_1)}\cup\{\langle r,c_1\rangle\}$,
    \item $(r,c_2)\in T_3$, and 
    \item there exists $\widehat{r}_2$ such that $T_4=\mathcal{G}(T_3,r)=T_3\ldownarrow^{(r,c_2)}_{(\widehat{r}_2,c_2)}\cup\{\langle r,c_2\rangle\}$.
\end{itemize}
Thus, for all $\tilde{T}\in KKD(D)$ satisfying $T\preceq \tilde{T}\preceq T_2$, we have $\langle r,c_1\rangle\in \tilde{T}$ and, considering part (f), $L_{\tilde{T}}(r,c_1)=L_{T_1}(r,c_1)$; note that this implies that there exists $\tilde{r}<r$ such that $(\tilde{r},c_1)\in\tilde{T}$ and $L_{\tilde{T}}(\tilde{r},c_1)=L_{T_1}(r,c_1)$. Consequently, applying parts (a) and (d), it follows that the non-ghost cells with label $L_{T_1}(r,c_1)$ in columns $\tilde{c}\ge c_1$ must occupy rows $<r$ in $\tilde{T}$. In particular, there exists $r^*<r$ such that $(r^*,c_2)\in T_3$ and $L_{T_3}(r^*,c_2)=L_{T_1}(r,c_1)$. Therefore, applying parts (c) and (f), we find that $$L_T(r,c_2)=L_{T_4}(r,c_2)=L_{T_3}(r,c_2)>L_{T_3}(r^*,c_2)=L_{T_1}(r,c_1)=L_T(r,c_1),$$ as desired.

(i) Considering part (c), we have that $$L_T(r,c_1)=\max\{L_T(\tilde{r},c_1)~|~(\tilde{r},c_1)\in T,~\tilde{r}<r\}.$$ Consequently, it must be the case that if $(\tilde{r},c_1)\in T$ with $L_T(\tilde{r},c_1)=L_T(r+1,c_2)>L_T(r,c_1)$, then $\tilde{r}>r$. Applying part (d), it follows that if $(\tilde{r},\tilde{c})\in T$ with $L_T(\tilde{r},\tilde{c})=L_T(r+1,c_2)$ for $\tilde{c}\le c_1$, then $\tilde{r}>r$. Thus, considering our labeling of ghost cells, if $\langle r+1,\tilde{c}\rangle\in T$ for $\tilde{c}\le c_1$, then $L_T(r+1,\tilde{c})\neq L_T(r+1,c_2)$. The result follows.
\end{proof}

Lemma~\ref{lem:colorp} in hand, we can now prove Theorem~\ref{thm:lockmain}.

\begin{proof}[Proof of Theorem~\ref{thm:lockmain}]
    Let $M$ denote the number of snowflakes in $\mathsf{snow}^*(D)$ and $R=\{r_i~|~i\in[n],~r_i>i\}$. Assume that row $r_i$ of $D$ contains $m_i$ cells for $i\in [n]$ and set $N=\max\{m_i~|~i\in[n]\}$. Note that if $R=\emptyset$, then the cells of $D$ occupy rows 1 through $n$ with the rightmost cell of row $r\in [n]$ being $(r,N)$. Consequently, if $R=\emptyset$, then $KKD(D)=\{D\}$ and it is straightforward to verify that the result holds in this case. So, assume that $R\neq \emptyset$. Then there exists $\ell$ satisfying $1\le\ell\le n$ for which $R=\{r_i\}_{i=\ell}^n$ where $r_i<r_{i+1}$ for $\ell\le i<n$ when $n-\ell>0$. We refer to those diagrams $D$ for which all rows of $R$ contain dark clouds in $\mathsf{snow}(D)$ as type I diagrams and those for which there is a unique $r_k\in R$ such that row $r_k$ of $\mathsf{snow}(D)$ contains no dark cloud as type II. Note that, considering the definition of $\mathsf{snow}^*(D)$, if $D$ is a type I diagram, then $$M=\sum_{i=1}^n(r_i-i);$$ while if $D$ is a type II diagram with row $r_k\in R$ of $\mathsf{snow}(D)$ containing no dark cloud, then $$M=\sum_{k\neq i\in[n]}(r_i-i).$$

    First, we show that $M\le \mathrm{MaxG}(D)$. To do so, we provide an algorithm for generating a diagram $T\in KKD(D)$ with exactly $M$ ghost cells. Note that if $D$ is a type I diagram, then $m_i\ge n-i+1$ for $\ell\le i\le n$; while if $D$ is a type II diagram with row $r_k\in R$ of $\mathsf{snow}(D)$ containing no dark cloud, then $m_i\ge n-i+1$ for $k<i\le n$, $m_k<n-k+1$, and $m_i\ge n-i$ for $\ell\le i<k$.
    We form $T$ from $D$ as follows. For $i=\ell$ up to $n$ in increasing order,
    \begin{itemize}
        \item[\textup{(i)}] if there exists $k$ such that $i<k\le n$ and row $r_k$ of $\mathsf{snow}(D)$ contains no dark cloud, then apply in succession $n-i-1$ Kohnert moves followed by a single ghost move at rows $r_i$ down to $i+1$ in decreasing order; if row $r_i$ of $\mathsf{snow}(D)$ contains no dark cloud, then apply in succession $\min\{m_i,n-i\}$ Kohnert moves at rows $r_i$ down to $i+1$ in decreasing order; and if there exists no $k$ such that $i< k\le n$ and row $r_k$ of $\mathsf{snow}(D)$ contains no dark cloud, then apply in succession $n-i$ Kohnert moves followed by a single ghost move at rows $r_i$ down to $i+1$ in decreasing order.
    \end{itemize}
      Let $T_i$ denote the diagram formed after step $(i)$ of the procedure above for $\ell\le i\le n$; note that $T=T_n$. For a type I diagram $D$, we have that 
    \begin{itemize}
        \item for $\ell\le i<n$, $\langle r_i-j,N-n+i\rangle\in T_i$ with $L_{T_i}(r_i-j,N-n+i)=r_i$ for $0\le j\le r_i-i-1$, and $T_i\cap \{(r,c),\langle r,c\rangle~|~i+1\le r\le r_{i+1},~c>N-n+i\}=\emptyset$;
        \item and for $i=n$, $\langle r_n-j,N\rangle\in T_n$ with $L_{T_n}(r_n-j,N)=r_n$ for $0\le j\le r_n-n-1$.
    \end{itemize}
    Consequently, in this case, the diagram $T$ contains $\sum_{i=1}^n(r_i-i)$ ghost cells. Similarly, for a type II diagram $D$ with row $r_k\in R$ containing no dark cloud in $\mathsf{snow}(D)$, we have that
    \begin{itemize}
        \item for $\ell\le i<k$, $\langle r_i-j,N-n+i+1\rangle\in T_i$ with $L_{T_i}(r_i-j,N-n+i+1)=r_i$ for $0\le j\le r_i-i-1$, and $T_i\cap \{(r,c),\langle r,c\rangle~|~i+1\le r\le r_{i+1},~c>N-n+i+1\}=\emptyset$;
        \item for $i=k$, $T_k\cap \{(r,c),\langle r,c\rangle~|~k+1\le r\le r_{k+1},~c>N-n+k+1\}=\emptyset$;
        \item for $k<i<n$, $\langle r_i-j,N-n+i\rangle\in T_i$ with $L_{T_i}(r_i-j,N-n+i)=r_i$ for $0\le j\le r_i-i-1$, and $T_i\cap \{(r,c),\langle r,c\rangle~|~i+1\le r\le r_{i+1},~c>N-n+i\}=\emptyset$;
        \item and for $i=n$, $\langle r_n-j,N\rangle\in T_n$ with $L_{T_n}(r_n-j,N)=r_n$ for $0\le j\le r_n-n-1$.
    \end{itemize}
    Consequently, in this case, the diagram $T$ contains $\sum_{k\neq i\in[n]}(r_i-i)$ ghost cells. Thus, for either type, it follows that $M\le \mathrm{MaxG}(D)$.   

    Next, we show that $M\ge \mathrm{MaxG}(D)$. Considering the definition of our labeling along with Lemma~\ref{lem:colorp} (a), for all $T\in KKD(D)$ and $i\in [n]$, there exists $(\widehat{r}_i,N)\in T$ such that $L_T(\widehat{r}_i,N)=r_i$. Moreover, applying Lemma~\ref{lem:colorp} (c), if $r_i<r_j$ for $i,j\in [n]$, then $\widehat{r}_i<\widehat{r}_j$. Thus, it follows that $\widehat{r}_i\ge i$. Consequently, applying Lemma~\ref{lem:colorp} (d), if $(r,c)\in T$ satisfies $L_T(r,c)=r_i$, then $r\ge i$. Considering our definition for the labels of ghost cells, it follows that if $\langle r,c\rangle\in T$ satisfies $L_T(r,c)=r_i$, then $r>i$; that is, for all diagrams of $T\in KKD(D)$, ghost cells with label $r_i$ must lie strictly above row $i$. Therefore, applying Lemma~\ref{lem:colorp} (g) and (h), we have that $(\ast)$ for all $T\in KKD(D)$ and $i\in [n]$ there is at most one ghost cell labeled by $r_i$ in rows $i+1$ through $r_i$ of $T$, and no ghost cells with label $r_i$ outside of these rows; that is, $\mathrm{MaxG}(D)\le \sum_{i=1}^n(r_i-i)$. Thus, we have $M\ge \mathrm{MaxG}(D)$ when $D$ is a type I diagram.

    It remains to show that $M\ge \mathrm{MaxG}(D)$ when $D$ is a type II diagram. Assume that row $r_k\in R$ of $\mathsf{snow}(D)$ contains no dark cloud. Note that, considering $(\ast)$, the value $$M=\sum_{k\neq i\in [n]}r_i-i+1$$ is equal to to the maximum number of ghost cells with labels $r_i$ for $k\neq i\in [n]$ that can be contained in any $T\in KKD(D)$. Thus, to prove that $M\ge \mathrm{MaxG}(D)$, it suffices to show that if $T\in KKD(D)$ contains a ghost cell labeled by $r_k$, then each such cell can be uniquely paired with a ghost cell labeled by $r_i$ for $k\neq i\in [n]$ which could, but does not occur in $T$. In particular, keeping in mind that, by $(\ast)$, ghost cells labeled by $r_i$ in $T$ can only occur in rows $j$ for $i< j\le r_i$, we show that each ghost cell labeled by $r_k$ in $T$ can be uniquely paired with a tuple $(r_i,j)$ for $k\neq i\in [n]$ and $i<j\le r_i$ such that row $j$ of $T$ contains no ghost cell with label $r_i$. 

    Since row $r_k$ is the unique row of $D$ containing no dark clouds in $\mathsf{snow}(D)$, it follows that there are at least $m_k$ nonempty rows strictly above row $r_k$ in $D$, i.e., $k+m_k\le n$; note that since $m_k\neq0$, it follows that $k<n$. Assume that $\langle r,j\rangle\in T$ with $k<r\le r_k$ satisfies $L_T(r,j)=r_k$. Note that, applying Lemma~\ref{lem:colorp} (a), we must have $N-m_k<j\le N$. If $j=N$, then, considering Lemma~\ref{lem:colorp} (i), there exists no ghost cell in row $r+1$ of $T$ labeled by $r_{k+1}$. Thus, since $k+1\in [n]$ and $k+1<r+1\le r_k+1\le r_{k+1}$, we can pair $\langle r,j\rangle$ with $(r_{k+1},r+1)$. So, assume that $j<N$. In this case, if there exists $t$ such that $0<t\le N-j<m_k\le n-k$ and row $r+t$ of $T$ contains no ghost cell labeled by $r_{k+t}$, then let $t^*$ be the least such $t$. Since $$1<k+t^*\le k+N-j\le k+N-(N-m_k+1)=k+m_k-1\le n,$$ i.e., $k+t^*\in [n]$, and $$k+t^*<r+t^*\le r_k+t^*\le r_{k+t^*},$$ we can pair $\langle r,j\rangle$ with $(r_{k+t^*},r+t^*)$. On the other hand, if for all $t$ satisfying $0<t\le N-j<m_k\le n-k$, row $r+t$ of $T$ contains a ghost cell labeled by $r_{k+t}$, then, applying Lemma~\ref{lem:colorp} (i), it follows that $\langle r+t,j+t\rangle\in T$ is labeled by $r_{k+t}$ for $0<t\le N-j$; but then, once again applying Lemma~\ref{lem:colorp} (i), we may conclude that there is no ghost cell labeled by $r_{k+N-j+1}$ in row $r+N-j+1$ of $T$. Since $$1<k+N-j+1\le k+N-(N-m_k+1)+1=k+m_k\le n,$$ i.e., $k+N-j+1\in [n]$, and $$k+N-j+1<r+N-j+1\le r_k+N-j+1\le r_{k+N-j+1},$$ it follows that, in this case, we can pair $\langle r,j\rangle$ with $(r_{k+N-j+1},r+N-j+1)$. As there can only be at most one ghost cell labeled by $r_k$ in each row of $T$, it follows that the pairing above matches distinct ghost cells labeled by $r_k$ in $T$ with distinct missing ghost cells labeled by $r_i$ for $k\neq i\in [n]$. The result follows.
\end{proof}

It would be interesting to consider how Theorem~\ref{thm:lockmain} could be extended to apply to all lock diagrams. At present, it is unclear to the authors how one might utilize the approach via labeling to understand $\mathrm{MaxG}(D)$ for arbitrary lock diagrams $D$.

\begin{remark}
    The labeling defined above for diagrams of $KKD(D)$ when $D$ is a lock diagram can be extended to more general diagrams. For an arbitrary diagram $D$, we define a recursive labeling of the diagrams in $KKD(D)$ starting from $D$ as follows. For $T\in KKD(D)$, denoting the label of $(r,c)$ or $\langle r,c\rangle\in T$ by $L_T(r,c)$, we set $L_D(r,c)=r$ for all $(r,c)\in D$, i.e., cells of $D$ are labeled by the rows that they occupy. Now, assume that $T\in KKD(D)$ is such that $T$ has yet to be labeled and there exists a labeled $T^*\in KKD(D)$ for which either $$T=\mathcal{K}(T^*,r^*)=T^*\ldownarrow^{(r^*,c^*)}_{(r^*-k,c^*)}\quad\quad\text{or}\quad\quad \mathcal{G}(T^*,r^*)=T^*\ldownarrow^{(r^*,c^*)}_{(r^*-k,c^*)}\cup \{\langle r^*,c^*\rangle\}$$ for $k\ge 1$. Then 
    \begin{itemize}
        \item[\textup{(i)}] $L_T(r^*-j,c^*)=L_{T^*}(r^*-j+1,c^*)$ for $1\le j\le k$;
        \item[\textup{(ii)}] if $L_{T^*}(r^*-j,c)\le L_{T^*}(r^*-j,c^*)$ for $c>c^*$ and $1\le j<k$, then $L_{T}(r^*-j,c)=L_{T^*}(r^*-j+1,c^*)$;
        \item[\textup{(iii)}] $L_T(r^*,c^*)=L_{T^*}(r^*,c^*)$ in the case $T=\mathcal{G}(T^*,r^*)$; and
        \item[\textup{(iv)}] $L_T(r,c)=L_{T^*}(r,c)$ for all remaining $(r,c)$ and $\langle r,c\rangle\in T^*\cap T$.
    \end{itemize}
It can be shown that when $D$ is a lock diagram, the labeling described here is the same as the one extended from Theorem~\ref{thm:label}. For general diagrams $D$, the labeling of $T\in KKD(D)$ given via the more general approach outlined above can depend on the $T^*$ chosen. Finally, it is worth noting that many of the results of this section concerning the distribution of labeled ghost cells can be established more generally using the labeling described here. Full details concerning the labeling defined here are omitted for the sake of brevity.
\end{remark}

\section{Greedy Approach}\label{sec:greedy}

In this section, given a diagram $D$ that contains no ghost cells, we consider the problem of computing the maximum number of ghost cells contained in a diagram $T\in KKD(D)$ formed by applying sequences of only ghost moves to $D$. Throughout this section, given a diagram $D$, let $GKD(D)$ denote the collection of diagrams consisting of $D$ along with all those diagrams which can be formed from $D$ by applying sequences of only ghost moves. Moreover, let $$\widehat{\mathrm{MaxG}}(D)=\max\{|G(\tilde{D})|~|~\tilde{D}\in GKD(D)\}.$$ Unlike in the case of $\mathrm{MaxG}(D)$, we are able to establish a result in the spirit of Theorem~\ref{thm:JP} which applies to the computation of $\widehat{\mathrm{MaxG}}(D)$ when $D$ is an arbitrary diagram that contains no ghost cells. To state the main result of this section, we require a diagram $\widehat{\mathsf{snow}}(D)$ analogous to $\mathsf{snow}(D)$.

Given an arbitrary diagram $D$ with nonempty columns $\{c_i\}_{i=1}^m$, where $c_i<c_{i+1}$ for $i\in [m-1]$ when $m>1$, let $$R_i=\{r~|~(r,c_i)\in D\}\quad\text{and}\quad F_i=\left\{r\in R_i~|~r\in \bigcup_{j=i+1}^nR_j\right\}$$ for $i\in [m]$; that is, $R_i$ consists of the rows containing non-ghost cells in column $c_i$ of $D$ and $F_i$ consists of the rows $r\in R_i$ for which the cell $(r,c_i)\in D$ is not rightmost in row $c_i$ of $D$. Form $\widehat{\mathsf{snow}}(D)$ as follows.
\begin{enumerate}
    \item First, we label the cells of $D$. Denoting the label of a cell $(r,c)\in D$ by $\widehat{L}_D(r,c)$, set $\widehat{L}_D(r,c_n)=1$ for all $(r,c_n)\in D$. As for the remaining cells of $D$, for each $i\in [n-1]$,
    \begin{itemize}
        \item set $\widehat{L}_D(r,c_i)=r$ for all $(r,c)\in D$ with $r\in F_i$; then,
        \item in decreasing order of $r\in R_i\backslash F_i$, set $\widehat{L}_D(r,c_i)$ to be either
        \begin{enumerate}
            \item[$(a)$] the maximal $r^*\in \bigcup_{j=i+1}^nR_j$ such that $r^*<r$ and no $(\tilde{r},c_i)\in D$ for $\tilde{r}>r$ has $\widehat{L}_D(\tilde{r},c_i)=r^*$ or
            \item[$(b)$] 1 if no such value exists.
        \end{enumerate}
    \end{itemize}
    \item For each empty position $(r,c)\notin D$ lying below some $(\widehat{r},c)\in D$ with $\widehat{r}>r$, draw a snowflake $\ast$ in position $(r,c)$ if there exists $(\tilde{r},c)\in D$ with $\tilde{r}>r$ and $\widehat{L}_D(\tilde{r},c)\le r$; otherwise, do nothing.
\end{enumerate}

\begin{example}
For the diagram $D$ of Figure~\ref{fig:Fex} \textup{(a)}, we illustrate $\widehat{\mathsf{snow}}(D)$ in Figure~\ref{fig:Fex} \textup{(b)}. Considering Theorem~\ref{thm:gmain} below, it follows that $\widehat{\mathrm{MaxG}}(D)=8$.
\begin{figure}[H]
    \centering
    $$\scalebox{0.8}{\begin{tikzpicture}[scale=0.65]
        \node at (0.5, 1.5) {$\cdot$};
        \node at (1.5, 1.5) {$\cdot$};
        \node at (3.5, 1.5) {$\cdot$};
        
        \node at (2.5, 2.5) {$\cdot$};
        \node at (5.5, 2.5) {$\cdot$};
        \node at (7.5, 2.5) {$\cdot$};

        \node at (1.5, 3.5) {$\cdot$};
        \node at (3.5, 3.5) {$\cdot$};
        \node at (4.5, 3.5) {$\cdot$};

        \node at (0.5, 4.5) {$\cdot$};
        \node at (1.5, 4.5) {$\cdot$};
        \node at (2.5, 4.5) {$\cdot$};
        \node at (3.5, 4.5) {$\cdot$};

        \node at (5.5, 5.5) {$\cdot$};
        \node at (6.5, 5.5) {$\cdot$};
        \node at (7.5, 5.5) {$\cdot$};

        \node at (1.5, 6.5) {$\cdot$};
        \node at (2.5, 6.5) {$\cdot$};
        
        \draw (0,7.5)--(0,0)--(8.5,0);
        
        \draw (0,1)--(2,1)--(2,2)--(0,2)--(0,1);
        \draw (1,1)--(1,2);
        \draw (3,1)--(4,1)--(4,2)--(3,2)--(3,1);
        
        \draw (2,2)--(3,2)--(3,3)--(2,3)--(2,2);
        \draw (5,2)--(6,2)--(6,3)--(5,3)--(5,2);
        \draw (7,2)--(8,2)--(8,3)--(7,3)--(7,2);

        \draw (1,3)--(2,3)--(2,4)--(1,4)--(1,3);

        \draw (3,3)--(5,3)--(5,4)--(3,4)--(3,3);
        \draw (4,3)--(4,4);

        \draw (0,4)--(4,4)--(4,5)--(0,5);
        \draw (1,4)--(1,5);
        \draw (2,4)--(2,5);
        \draw (3,4)--(3,5);

        \draw (5,5)--(8,5)--(8,6)--(5,6)--(5,5);
        \draw (6,5)--(6,6);
        \draw (7,5)--(7,6);

        \draw (1,6)--(3,6)--(3,7)--(1,7)--(1,6);
        \draw (2,6)--(2,7);
        \node at (4,-1) {\Large (a)};
\end{tikzpicture}}\quad\quad\quad\quad\scalebox{0.8}{\begin{tikzpicture}[scale=0.65]
        \node at (0.5, 1.5) {$2$};
        \node at (1.5, 1.5) {$2$};
        \node at (3.5, 1.5) {$1$};
        \node at (3.5, 0.5) {$\ast$};
        
        \node at (2.5, 2.5) {$3$};
        \node at (5.5, 2.5) {$3$};
        \node at (7.5, 2.5) {$1$};

        \node at (1.5, 3.5) {$4$};
        \node at (3.5, 3.5) {$4$};
        \node at (4.5, 3.5) {$3$};
        \node at (4.5, 2.5) {$\ast$};
        \node at (3.5, 2.5) {$\ast$};

        \node at (0.5, 4.5) {$5$};
        \node at (1.5, 4.5) {$5$};
        \node at (2.5, 4.5) {$5$};
        \node at (3.5, 4.5) {$3$};

        \node at (5.5, 5.5) {$6$};
        \node at (6.5, 5.5) {$6$};
        \node at (7.5, 5.5) {$1$};
        \node at (7.5, 4.5) {$\ast$};
        \node at (7.5, 3.5) {$\ast$};
        \node at (7.5, 1.5) {$\ast$};
        \node at (7.5, 0.5) {$\ast$};

        \node at (1.5, 6.5) {$7$};
        \node at (2.5, 6.5) {$6$};
        \node at (2.5, 5.5) {$\ast$};
        
        \draw (0,7.5)--(0,0)--(8.5,0);
        
        \draw (0,1)--(2,1)--(2,2)--(0,2)--(0,1);
        \draw (1,1)--(1,2);
        \draw (3,1)--(4,1)--(4,2)--(3,2)--(3,1);
        
        \draw (2,2)--(3,2)--(3,3)--(2,3)--(2,2);
        \draw (5,2)--(6,2)--(6,3)--(5,3)--(5,2);
        \draw (7,2)--(8,2)--(8,3)--(7,3)--(7,2);

        \draw (1,3)--(2,3)--(2,4)--(1,4)--(1,3);

        \draw (3,3)--(5,3)--(5,4)--(3,4)--(3,3);
        \draw (4,3)--(4,4);

        \draw (0,4)--(4,4)--(4,5)--(0,5);
        \draw (1,4)--(1,5);
        \draw (2,4)--(2,5);
        \draw (3,4)--(3,5);

        \draw (5,5)--(8,5)--(8,6)--(5,6)--(5,5);
        \draw (6,5)--(6,6);
        \draw (7,5)--(7,6);

        \draw (1,6)--(3,6)--(3,7)--(1,7)--(1,6);
        \draw (2,6)--(2,7);
        \node at (4,-1) {\Large (b)};
\end{tikzpicture}}$$
    \caption{$\widehat{\mathsf{snow}}(D)$}
    \label{fig:Fex}
\end{figure}
\end{example}

Letting $\widehat{\text{sf}}(D)$ denote the number of snowflakes in $\widehat{\mathsf{snow}}(D)$, the main result of this section is as follows.

\begin{theorem}\label{thm:gmain}
If $D$ is a diagram that contains no ghost cells, then $\widehat{\mathrm{MaxG}}(D)=\widehat{\textup{sf}}(D)$. 
\end{theorem}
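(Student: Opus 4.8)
The plan is to prove the two inequalities $\widehat{\mathrm{MaxG}}(D)\ge \widehat{\textup{sf}}(D)$ and $\widehat{\mathrm{MaxG}}(D)\le \widehat{\textup{sf}}(D)$ separately, after first recording the basic dynamics of ghost-only moves. Since every nontrivial ghost move creates exactly one ghost cell and moves a single non-ghost cell strictly downward within its own column, the number of ghost cells in any $T\in GKD(D)$ equals the number of nontrivial ghost moves used to produce it, and ghost cells in column $c$ are produced only by cells of column $c$. Two monotonicity facts drive everything: (i) a cell that is not the rightmost cell of its row can never become rightmost, because anything to its right either remains a non-ghost cell or turns into a ghost cell in the same position, so such a cell stays frozen forever; and (ii) within each column the set of occupied positions (non-ghost or ghost) only grows as moves are applied, since a descending cell leaves a ghost behind. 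Fact (i) shows that only cells which are rightmost at the moment they move can contribute ghosts, and (ii) shows that once a column to the right occupies a given row it occupies that row permanently, so the obstruction data $\bigcup_{j>i}R_j$ encoded by the labeling is stable under moves.

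For the lower bound I would give an explicit move sequence realizing $\widehat{\textup{sf}}(D)$ ghosts, processing the nonempty columns from left to right and, within each column, applying ghost moves to its rightmost cells in decreasing order of row. The labeling $\widehat{L}_D$ is exactly the bookkeeping for this: a rightmost cell of column $c_i$ starting in row $r$ will descend and freeze upon reaching row $\widehat{L}_D(r,c_i)$, the first row at which it sits weakly left of an occupied position in a column to its right (or the floor, label $1$). Because the left columns are processed while the right columns still sit in their original rows $R_j$, each landing row predicted by $\widehat{L}_D$ is genuinely available, and the greedy assignment of distinct target rows $r^*\in\bigcup_{j>i}R_j$ guarantees that distinct cells of a column freeze in distinct rows. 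I would then verify that the number of ghost cells created in column $c_i$ equals the number of snowflakes in column $c_i$ of $\widehat{\mathsf{snow}}(D)$; by fact (ii) the later moves on right columns never undo the earlier freezings of left columns, so the per-column totals add to $\widehat{\textup{sf}}(D)$.

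For the upper bound I would argue column by column that the ghost cells of any $T\in GKD(D)$ number at most $\widehat{\textup{sf}}(D)$. The mechanism is that each rightmost cell of column $c$ starting in row $r$ can descend only as far as row $\widehat{L}_D(r,c)$ — it freezes once it reaches a row occupied, permanently by fact (ii), in a column to its right — so every ghost it leaves lies in a row of the half-open interval $[\widehat{L}_D(r,c),r)$ that was empty in $D$. These are precisely the rows $r'$ at which Step 2 places a snowflake on account of this cell, since $r'\ge \widehat{L}_D(r,c)$ means the cell lies above $r'$ with label $\le r'$. I would then show that the label assignment makes these intervals, over the rightmost cells of a fixed column, pairwise disjoint and consuming distinct snowflakes, while non-rightmost cells (frozen, hence producing no ghosts) contribute nothing; summing over columns yields $|G(T)|\le \widehat{\textup{sf}}(D)$.

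The hard part will be the coupling between columns through the rightmost-cell condition, which is exactly what the labeling is engineered to absorb. Concretely, the delicate step is proving that $\widehat{L}_D(r,c)$ correctly predicts the freezing row in both directions: that the construction can actually drive each rightmost cell down to its label (lower bound) and that no cell can ever slip below its label (upper bound). This rests on fact (ii) together with the collision-free greedy matching of target rows, and on checking that the snowflake-placement rule of Step 2 counts exactly the empty rows reachable per column and that the associated intervals do not overlap. Establishing this label-versus-freezing correspondence — rather than the routine observation that one ghost is created per move — is where the real work lies.
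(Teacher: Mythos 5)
Your preliminary facts are fine (your fact (i) is the paper's Lemma~\ref{lem:block}, and fact (ii) holds because ghost-only moves never vacate a position), but the engine of your argument --- that $\widehat{L}_D(r,c)$ is a \emph{per-cell} barrier, i.e., that each rightmost cell ``will descend and freeze upon reaching row $\widehat{L}_D(r,c)$'' and that ``no cell can ever slip below its label'' --- is false, and both of your inequalities lean on it. Take $D=\{(5,1),(6,1),(3,2)\}$. Then $\widehat{L}_D(6,1)=3$, $\widehat{L}_D(5,1)=1$, and column $1$ of $\widehat{\mathsf{snow}}(D)$ carries four snowflakes (rows $1$ through $4$). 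The optimal play is: move the row-$6$ cell once, $(6,1)\to(4,1)$; then drive the row-$5$ cell to $(3,1)$, where it freezes against $(3,2)$; then continue the first cell $(4,1)\to(2,1)\to(1,1)$. The cell labeled $3$ ends at row $1$, strictly below its label, and its ghosts sit at rows $6,4,2$ --- row $2$ lies outside your interval $[\widehat{L}_D(6,1),6)=[3,6)$, so the disjoint-interval accounting in your upper bound breaks: the labels control only the \emph{multiset} of freezing rows in aggregate, not which cell lands where, and you prove no such aggregate statement. The same example defeats your lower-bound construction under its natural reading: if you complete the descent of each rightmost cell in decreasing order of starting row, the row-$6$ cell falls $6\to 4\to 3$ and its ghost at $\langle 4,1\rangle$ then permanently blocks the row-$5$ cell (ghost cells obstruct passage), yielding only $2$ ghosts in column $1$ instead of $4$. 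Realizing $\widehat{\textup{sf}}(D)$ requires interleaving the descents, which your sketch neither prescribes nor justifies.

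This is exactly why the paper's proof takes a different shape on both sides. For the upper bound it proves a statement about \emph{positions}, not cells: every position occupied in any $T\in GKD(D)$ is either a cell of $D$ or a snowflake position of $\widehat{\mathsf{snow}}(D)$. This is shown by transporting the labels along the move sequence and deriving a contradiction: if an empty snow position $(r^\circ,c^\circ)$ were ever occupied, one constructs a strictly increasing infinite sequence of labels $i_1<i_2<\cdots$ of cells in column $c^\circ$ (each new label is forced in order to fill the rows that let the previous cell pass), contradicting finiteness; the ghost bound then follows by counting occupied positions, since the non-ghost cell count is invariant. For the lower bound the paper never sends a cell to a predicted destination; it applies a \emph{single} ghost move at the highest rightmost row $N$ of a column, deletes the resulting ghost to form $D^*$, and proves the delicate relabeling lemma that $\widehat{\mathsf{snow}}(D^*)$ has the same snowflakes strictly below the landing row $\widehat{N}$ as $\widehat{\mathsf{snow}}(D)$ (the labels of the rightmost cells between $\widehat{N}$ and $N$ shift among themselves), then recurses --- which automatically produces the correct interleaved order that your greedy sweep misses. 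Repairing your proposal would mean replacing both per-cell claims by these aggregate ones, i.e., essentially reproving the paper's two arguments.
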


As the corresponding notation will prove to be useful ongoing, we define a poset structure on $GKD(D)$. In particular, given a diagram $D$ that contains no ghost cells, for $D_1,D_2\in GKD(D)$ we say $D_2\prec_gD_1$ provided that $D_2$ can be obtained from $D_1$ by applying some sequence of ghost moves; note that this is the poset obtained by restricting the poset structure on $KKD(D)$ introduced in Section~\ref{sec:extJP} to the diagrams of $GKD(D)$. If $D_1$ covers $D_2$ in this ordering, then we write $D_2\precdot_g D_1$. Note that if $D_2\precdot_gD_1$, then $D_2$ can be formed from $D_1$ by applying a single ghost move. The poset structure on $GKD(D)$ defined here is considered in \textup{\textbf{\cite{GK}}}.

The following lemma allows us to provide an interpretation for the labels of cells in $\widehat{\mathsf{snow}}(D)$.

\begin{lemma}\label{lem:block}
    Let $D$ be a diagram that contains no ghost cells. If $(r,c)\in D$ and there exists $T\in KKD(D)$ such that $(r,\tilde{c})\in T$ with $\tilde{c}<c$, then $(r,\tilde{c})\in \tilde{T}$ for all $\tilde{T}\prec_g T$.
\end{lemma}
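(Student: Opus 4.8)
The plan is to exploit the fundamental asymmetry of ghost moves: they only ever enlarge the set of occupied positions. Indeed, a single ghost move $\mathcal{G}(S,s)=S\ldownarrow^{(s,c^*)}_{(\hat s,c^*)}\cup\{\langle s,c^*\rangle\}$ removes the non-ghost cell from position $(s,c^*)$ but immediately replaces it with the ghost $\langle s,c^*\rangle$, while creating a new occupied position $(\hat s,c^*)$. Hence, writing $\mathrm{supp}(S)$ for the set of positions of $S$ occupied by a cell (ghost or non-ghost), we have $\mathrm{supp}(S)\subseteq\mathrm{supp}(S')$ whenever $S'\precdot_g S$. First I would record this monotonicity and iterate it to conclude that every position occupied in $D$ stays occupied throughout $GKD(D)$; in particular, since $(r,c)\in D$, the position $(r,c)$ is occupied in every $\tilde{T}\preceq_g T$. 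Here I read the hypothesis as $T\in GKD(D)$, which is what the relation $\tilde{T}\prec_g T$ forces anyway, and which is needed to trace $(r,c)$ back to $D$.

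Next I would set up the main induction along a saturated chain $\tilde{T}=S_k\precdot_g\cdots\precdot_g S_1\precdot_g S_0=T$, proving, as $j$ increases from $0$ to $k$, that the non-ghost cell $(r,\tilde{c})$ belongs to $S_j$. The base case $j=0$ is the hypothesis $(r,\tilde{c})\in T$. For the step, write $S_{j+1}=\mathcal{G}(S_j,\rho)$. The key local observation, read off from the definition of a $K$-Kohnert move, is that such a move can only relocate (and thereby ghost-ify) the cell occupying the rightmost position of row $\rho$, and only when that cell is non-ghost; consequently a cell is immovable as soon as some position strictly to its right in the same row is occupied, even by a ghost. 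Since the position $(r,c)$ with $c>\tilde{c}$ is occupied in $S_j$, the cell $(r,\tilde{c})$ is never the rightmost cell of row $r$, so no ghost move at row $r$ can select it; and a ghost move at a row other than $r$ cannot disturb it, because the only cell it removes lies in a different row and the only position it fills is empty, hence distinct from the already occupied position $(r,\tilde{c})$. This yields $(r,\tilde{c})\in S_{j+1}$ and closes the induction.

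The main obstacle is not the descent itself but the careful reading of which cells a ghost move can affect, together with distinguishing ``the position $(r,\tilde{c})$ is occupied'' from ``the non-ghost cell $(r,\tilde{c})$ is present.'' Monotonicity of support alone yields only the former, and a ghost could a priori occupy $(r,\tilde{c})$; the entire role of the hypothesis $(r,c)\in D$ with $c>\tilde{c}$ is to supply a permanent right-neighbour that forbids $(r,\tilde{c})$ from ever being rightmost in its row, which is exactly what keeps it a non-ghost. I would also flag that the argument genuinely uses that only ghost moves are applied below $T$: were Kohnert moves allowed, the cell at $(r,c)$ could vacate its position without leaving a ghost, $\mathrm{supp}$ could shrink, and the conclusion would fail. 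Thus the restriction to $\prec_g$ (equivalently $T,\tilde{T}\in GKD(D)$) is essential rather than cosmetic.
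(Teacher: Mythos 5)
Your proposal is correct and is essentially the paper's own argument: the paper likewise observes that under ghost moves the position $(r,c)$ stays occupied by either $(r,c)$ or $\langle r,c\rangle$ in every diagram, so $(r,\tilde{c})$ is never rightmost in row $r$ and therefore can never be selected by a ghost move; your saturated-chain induction and the occupied-versus-non-ghost distinction just make this bookkeeping explicit. Your caveat about reading $T\in GKD(D)$ is well taken---the paper's one-line proof asserts the occupancy claim for all $T\in KKD(D)$, which is literally false (a Kohnert move can vacate $(r,c)$ without leaving a ghost, exactly the failure mode you describe), but the slip is harmless precisely because $\prec_g$ is only defined on $GKD(D)$, which is how the lemma is applied throughout Section~4.
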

\begin{proof}
    Note that since $(r,c)\in D$, it follows that for all $T\in KKD(D)$ either $(r,c)\in T$ or $\langle r,c\rangle\in T$. Consequently, for any $T\in KKD(D)$ and $\tilde{c}<c$, if $(r,\tilde{c})\in T$, then $(r,\tilde{c})$ is not rightmost in row $r$ of $T$. The result follows.
\end{proof}

\begin{remark}
    Take $(r,c)\in D$ which has label $\widehat{L}(r,c)=r^*$ in $\widehat{\mathsf{snow}}(D)$. Label the cell $(r,c)$ by $r^*$ in $D$ and assume that the label is retained through the application of ghost moves; that is, if $(\tilde{r},c)\in T_1\prec_g D$ is labeled $r^*$ and $T_2=\mathcal{G}(T_1,\widehat{r}),$ then either
    \begin{enumerate}
        \item[$(1)$] $(\tilde{r},c)\in T_2$ and $(\tilde{r},c)$ is the unique cell labeled by $r^*$ in $T_2$ or
        \item[$(2)$] there exists $1\le j\le \tilde{r}-1$ such that $$T_2=T_1\ldownarrow^{(\tilde{r},c)}_{(\tilde{r}-j,c)}\cup\{\langle \tilde{r},c\rangle\}$$ in which case the unique cell labeled by $r^*$ in $T_2$ is $(\tilde{r}-j,c)$.
    \end{enumerate}
    Considering Lemma~\ref{lem:block}, the label $r^*$ of the cell $(r,c)\in D$ corresponds to a row weakly below in which, if ever occupied by, the cell will remain after the application of any sequence of ghost moves; note that Lemma~\ref{lem:block} is not required for this conclusion when $r^*=1$.
\end{remark}

\begin{proof}[Proof of Theorem~\ref{thm:gmain}]
    First, we show that $\widehat{\mathrm{MaxG}}(D)\le \widehat{\text{sf}}(D)$. To this end, let $$D_1=\{(r,c)~|~\exists~T\in KKD(D)~\text{such that}~(r,c)\in T\}$$ and $D_2$ denote the set of positions $(r,c)$ in which $\widehat{\mathsf{snow}}(D)$ contains either a cell or a $\ast$. Note that for $T\in GKD(D)$, since ghost cells can only occupy positions that non-ghost cells previously occupied, $D_1$ can be described as the set of positions that can be occupied by a cell in $T$, ghost or non-ghost. If we can show that $D_1\subseteq D_2$, then it will follow that $\widehat{\mathrm{MaxG}}(D)\le \widehat{\text{sf}}(D)$. To see this, note that $D_1\subseteq D_2$ implies that the number of cells, both ghost and non-ghost, contained in $T\in GKD(D)$ is bounded above by $$|D_2|=|\{(r,c)~|~(r,c)\in D\}|+\widehat{\text{sf}}(D).$$ Thus, since every $T\in GKD(D)$ contains exactly $|\{(r,c)~|~(r,c)\in D\}|$ non-ghost cells, if $D_1\subseteq D_2$, then it will follow that the number of ghost cells contained in $T\in GKD(D)$ is bounded above by $$|D_2|-|\{(r,c)~|~(r,c)\in D\}|=\widehat{\text{sf}}(D),$$ as desired. Now, to establish that $D_1\subseteq D_2$, we show that if $(r,c)$ is an empty position of $\widehat{\mathsf{snow}}(D)$, i.e., contains no cell or $\ast$, then $(r,c)\notin T$ for all $T\in KKD(D)$.

    Assume that $(r^\circ,c^\circ)$ corresponds to an empty position of $\widehat{\mathsf{snow}}(D)$. Evidently, if there exists no $r>r^\circ$ such that $(r,c^\circ)\in D$, then $(r^\circ,c^\circ)\notin T$ for all $T\in GKD(D)$. So, in addition, assume that $$\emptyset\neq\{r>r^\circ~|~(r,c^\circ)\in D\}=U.$$ Since position $(r^\circ,c^\circ)$ of $\widehat{\mathsf{snow}}(D)$ is empty, it follows that $\widehat{L}(r,c^\circ)>r^\circ$ for all $r\in  U$. For a contradiction, assume that there exists $T\in GKD(D)$ for which $(r^\circ,c^\circ)\in T$. We define a labeling of the non-ghost cells of $T$ iteratively starting from $D$ and using the labeling of cells in $\widehat{\mathsf{snow}}(D)$ as follows. Assume that $T\in GKD(D)$ was formed from $D$ by applying ghost moves at rows $\{r_i\}_{i=1}^n$ in increasing order of $i$; that is, letting $D_0=D$ and $D_i=\mathcal{G}(D_{i-1},r_i)$ for $i\in [n]$, we have $D_{i}\prec_g D_{i-1}$ for $i\in [n]$ and $D_n=T$. Letting $L_{D_i}(r,c)$ denote the label of $(r,c)\in D_i$ for $0\le i\le n$, we have $L_{D_0}(r,c)=L_D(r,c)=\widehat{L}_D(r,c)$ for all $(r,c)\in D_0$, i.e., the cells of $D$ are labeled as in $\widehat{\mathsf{snow}}(D)$; and for $i\in [n]$, if $$D_i=D_{i-1}\ldownarrow^{(r_i,c_i)}_{(\widehat{r}_i,c_i)}\cup\{\langle r_i,c_i\rangle\},$$ then we define $L_{D_i}(\widehat{r}_i,c_i)=L_{D_{i-1}}(r_i,c_i)$ and $L_{D_i}(r,c)=L_{D_{i-1}}(r,c)$ for all $(r,c)\in D_i\cap D_{i-1}=D_{i-1}\backslash\{(r_i,c_i)\}$. Using this labeling of the non-ghost cells of $T$, we now construct a sequence of values $\{i_j\}_{j\ge 1}$ contained in $\mathbb{N}$ as follows.
\begin{enumerate}
        \item[] $i_1$: Define $i_1=L_T(r^\circ,c^\circ)$, i.e., $i_1$ is the label of the cell $(r^\circ,c^\circ)\in T$.
        \item[] $i_2$: Assume that the cell labeled by $i_1$ in column $c^\circ$ of $\widehat{\mathsf{snow}}(D)$ occupies row $r_{i_1}$. Note that $r_{i_1}>i_1>r^\circ$ considering the definition of our labeling and our assumptions on the position $(r^\circ,c^\circ)$. Let $$L_1=\{r~|~r^\circ<r< r_{i_1}~\text{and}~\exists~\tilde{c}>c^\circ~\text{such that}~(r,\tilde{c})\in D\},$$ i.e., $L_1$ consists of positions in column $c^\circ$ of $T$ between rows $r_{i_1}$ and $r^\circ$ for which an occupying cell of $T$ would not be rightmost in its row by Lemma~\ref{lem:block}; note that $L_1\neq\emptyset$ since $i_1\in L_1$. Keeping in mind our labeling of the cells in $D_i$ for $0\le i\le n$, since the cell labeled by $i_1$ occupies position $(r_{i_1},c^\circ)$ in $D_0=D$ and moves to position $(r^\circ,c^\circ)$ in $T=D_n$, it follows that for each $\tilde{r}\in L_1$ we have $(\tilde{r},c^\circ)\in T$. Moreover, since $i_1\in L_1$, $r^\circ\notin L_1$, and the cell labeled by $i_1$ in column $c^\circ$ of $T$ occupies position $(r^\circ,c^\circ)$, it follows that there exists $(\tilde{r},c^\circ)\in T$ with $\tilde{r}\in L_1$ and $r^*=L_T(\tilde{r},c^\circ)>\max L_1$. Let $i_2$ denote the maximal such value, i.e., $i_2=\max\{L_T(\tilde{r},c^\circ)~|~\tilde{r}\in L_1~\text{and}~(\tilde{r},c^\circ)\in T\}$. Note that $i_2>i_1$ by construction.
        \item[] $i_j$ for $j>2$: Assume the cell labeled by $i_{j-1}$ in column $c^\circ$ of $\widehat{\mathsf{snow}}(D)$ occupies row $r_{i_{j-1}}$. Note that $r_{i_{j-1}}>r^\circ$ by construction. Let $$L_{j-1}=\{r~|~r^\circ<r<r_{i_{j-1}}~\text{and}~\exists \tilde{c}>c^\circ~\text{such that}~(r,\tilde{c})\in D\},$$ i.e., $L_{j-1}$ consists of positions in column $c^\circ$ of $T$ between rows $r_{i_{j-1}}$ and $r^\circ$ for which an occupying cell of $T$ would not be rightmost in its row by Lemma~\ref{lem:block}. Note that, since $r_{i_{j-1}}>i_{j-1}>\max L_{j-2}$ by construction, it follows that $L_{j-2}\subsetneq L_{j-1}$. From the construction of $i_{j-1}$, it follows that for all $\tilde{r}\in L_{j-2}$ we have $(\tilde{r},c^\circ)\in T$. Moreover, keeping in mind our labeling of the cells in $D_i$ for $0\le i\le n$, since the cell labeled by $i_{j-1}$ occupies position $(r_{i_{j-1}},c^\circ)$ in $D_0=D$ and moves to a position $(\widehat{r},c^\circ)$ in $T$ with $\widehat{r}\in L_{j-2}$, it follows that for all $\tilde{r}\in L_{j-1}$ we have $(\tilde{r},c^\circ)\in T$. Thus, since $i_1\in L_{j-1}$, $r^\circ\notin L_1$, and the cell labeled by $i_1$ of $T$ occupies position $(r^\circ,c^\circ)$, it follows that there exists $(\tilde{r},c^\circ)\in T$ with $\tilde{r}\in L_{j-1}$ and $r^*=L_T(\tilde{r},c^\circ)>\max L_{j-1}$. Let $i_j$ denote the maximal such value, i.e., $i_j=\max\{L_T(\tilde{r},c^\circ)~|~\tilde{r}\in L_{j-1}~\text{and}~(\tilde{r},c^\circ)\in T\}$. Note that $i_j>i_{j-1}$ by construction.
\end{enumerate}
Intuitively, each $i_j$ is the largest label of a cell which is used to fill a position, allowing the cell labeled by $i_{j-1}$ to reach its final position in $T$. Note that this procedure can be performed indefinitely, forming an infinite increasing sequence. Since the collection of labels of cells in column $c^\circ$ is finite, this is a contradiction. Thus, $(r^\circ,c^\circ)\notin T$ for all $T\in GKD(D)$ and we may conclude that $D_1\subseteq D_2$. As noted above, it follows that $\widehat{\mathrm{MaxG}}(D)\le \widehat{\text{sf}}(D)$.

Finally, to show that $\widehat{\mathrm{MaxG}}(D)\ge \widehat{\text{sf}}(D)$, it suffices to show that there exists a sequence of ghost moves which can be applied to $D$ to form $T\in GKD(D)$ satisfying $|G(T)|=\widehat{\text{sf}}(D)$. To accomplish this, we show that for any nonempty column $c$ of $D$, there exists a sequence of diagrams $\{D_i\}_{i=0}^n\subset GKD(D)$ with $D_n=T$ such that
    \begin{itemize}
        \item if $n>0$, then for each $i\in [n]$ there exists $r_i$ such that $$D_i=\mathcal{G}(D_{i-1},r_i)=D_{i-1}\ldownarrow^{(r_i,c)}_{(\widehat{r}_i,c)}\cup\{\langle r_i,c\rangle\};$$ and
        \item the number of ghost cells in column $c$ of $T$ is equal to the number of snowflakes $\ast$ in column $c$ of $\widehat{\mathsf{snow}}(D)$;
    \end{itemize}
    that is, there exists a sequence of ghost moves which, when applied to $D$, only affects column $c$ and introduces the appropriate number of ghost cells. Our proof is by induction on $N=\max\{r~|~(r,c)\in R(D)\}$. Note that the base case is immediate as, if $N=1$, then column $c$ of $\widehat{\mathsf{snow}}(D)$ contains no $\ast$'s. Assume that the result holds for $N-1\ge 1$ and that $D$ is a diagram for which $N=\max\{r~|~(r,c)\in R(D)\}$. Moreover, assume that $$D_1=\mathcal{G}(D,N)=D\ldownarrow^{(N,c)}_{(\widehat{N},c)}\cup\{\langle N,c\rangle\}$$ with $N\neq \widehat{N}$; note that if $\widehat{N}=N$, then column $c$ of $\widehat{\mathsf{snow}}(D)$ contains no $\ast$'s and we are done. Considering the definition of $\widehat{\mathsf{snow}}(D)$, it follows that $\widehat{L}(N,c)\le \widehat{N}$ and $\widehat{\mathsf{snow}}(D)$ contains a $\ast$ in position $(\widehat{N},c)$ with any other $\ast$'s in column $c$ occurring below row $\widehat{N}$. Let $D^*=D_1\backslash\{\langle N,c\rangle\}$. If we can show that $\widehat{\mathsf{snow}}(D^*)$ contains $\ast$'s in the same positions $(r,c)$ with $r<\widehat{N}$ as $\widehat{\mathsf{snow}}(D)$, then the result will follow by induction. To see this, first note that our induction hypothesis applies to $D^*$. Thus, there exists a sequence of ghost moves which can be applied to $D^*$ affecting only cells in column $c$ and introducing the same number of ghost cells in column $c$ as $\ast$'s in column $c$ of $\widehat{\mathsf{snow}}(D^*)$. Let $T$ denote the diagram formed by applying the same sequence of ghost moves to $D_1$. By the definitions of $(N,c)\in D$ and $D^*$, we have that for $(\tilde{r},c)$ with $\tilde{r}\ge N$, either $(\tilde{r},c)\notin D^*$ or $(\tilde{r},c)\notin R(D^*)$. Consequently, any $\ast$'s in column $c$ of $\widehat{\mathsf{snow}}(D^*)$ occupy rows $\tilde{r}<\widehat{N}$ so that all ghost moves applied in forming $T$ from $D_1$ must correspond to rows $\tilde{r}<\widehat{N}$. Thus, since $D^*$ and $D_1$ match weakly below row $\widehat{N}$, it follows that the sequence of ghost moves applied to form $T$ from $D_1$ introduces ghost cells in the same rows in column $c$ as if the sequence were applied to $D^*$, and affects no cells of any other column; that is, if $\widehat{\mathsf{snow}}(D^*)$ contains $\ast$'s in the same positions $(r,c)$ with $r<\widehat{N}$ as $\widehat{\mathsf{snow}}(D)$, then $T$ would have the desired number of ghost cells in column $c$ and no ghost cells in any other column.

    To show that $\widehat{\mathsf{snow}}(D)$ and $\widehat{\mathsf{snow}}(D^*)$ contain $\ast$'s in the same positions $(r,c)$ with $r<\widehat{N}$, we consider how the labeling of the cells in column $c$ of $\widehat{\mathsf{snow}}(D^*)$ relates to that of those in $\widehat{\mathsf{snow}}(D)$. Note that for all $(\tilde{r},c)\in D$ satisfying $(\tilde{r},c)\notin R(D)$, we have $(\tilde{r},c)\notin R(D^*)$ so that $\widehat{L}_D(\tilde{r},c)=\widehat{L}_{D^*}(\tilde{r},c)$. Now, let $$R=\{\tilde{r}~|~\widehat{N}<\tilde{r}<N,~(\tilde{r},c)\in R(D)\}=\{\tilde{r}~|~\widehat{N}<\tilde{r}<N,~(\tilde{r},c)\in R(D^*)\}.$$ Moreover, let $L=\{\widehat{L}_D(\tilde{r},c)~|~\tilde{r}\in R\}$. If $R=\emptyset$, then it follows that $\widehat{L}_D(N,c)=\widehat{L}_{D^*}(\widehat{N},c)$ and $\widehat{L}_D(r,c)=\widehat{L}_{D^*}(r,c)$ for all $(r,c)\in \{(\tilde{r},c)\in D~|~\tilde{r}\neq N\}=\{(\tilde{r},c)\in D^*~|~\tilde{r}\neq \widehat{N}\}$; note that, as a consequence, the number of $\ast$'s occurring below row $\widehat{N}$ in $\widehat{\mathsf{snow}}(D^*)$ is equal to that of $\widehat{\mathsf{snow}}(D)$. On the other hand, if $R\neq \emptyset$, then assume that $R=\{r_i\}_{i=1}^n$ and $L=\{\ell_i\}_{i=1}^n$ with $r_i>r_{i+1}$ for $i\in [n-1]$ in the case $n>1$; note that if $n>1$, then $\ell_i>\ell_{i+1}$ for $i\in [n-1]$. By construction of $\widehat{\mathsf{snow}}(D)$ we have that $\widehat{N}\ge \widehat{L}_D(N,c)>\ell_1$. Thus, it follows that $\widehat{L}_{D^*}(r_1,c)=\widehat{L}_D(N,c)$, $\widehat{L}_{D^*}(\widehat{N},c)=\widehat{L}_D(r_n,c)$, and, in the case that $n>1$, $\widehat{L}_{D^*}(r_{i+1},c)=\widehat{L}_D(r_{i},c)$ for $i\in [n-1]$. Since the set of labels of cells in rows $\widehat{N}\le r<N$ of column $c$ in $\widehat{\mathsf{snow}}(D^*)$ is equal to that for cells in rows $\widehat{N}< r\le N$ of column $c$ in $\widehat{\mathsf{snow}}(D)$ and $$D^*\cap\{(\tilde{r},c)~|~\tilde{r}<\widehat{N}\}=D\cap\{(\tilde{r},c)~|~\tilde{r}<\widehat{N}\},$$ it follows that all cells below row $\widehat{N}$ in column $c$ of $D^*$ have the same label in $\widehat{\mathsf{snow}}(D^*)$ as those in $\widehat{\mathsf{snow}}(D)$. Consequently, considering the definition of $\widehat{\mathsf{snow}}(-)$, it follows that $\widehat{\mathsf{snow}}(D^*)$ and $\widehat{\mathsf{snow}}(D)$ contain the same number of $\ast$'s in positions $(r,c)$ with $r<N^*$. As noted above, the result follows.
\end{proof}

Since we now have a method for computing $\widehat{\mathrm{MaxG}}(D)$ for any diagram $D$, it is natural to wonder if this could be applied to solve our original problem of computing $\mathrm{MaxG}(D)$. Unfortunately, it is not difficult to produce example diagrams $D$ for which $\widehat{\mathrm{MaxG}}(D)<\mathrm{MaxG}(D)$, see Example~\ref{ex:greedyno} below.

\begin{example}\label{ex:greedyno}
    Take $D$ to be the diagram illustrated in Figure~\ref{fig:greedyfail} below. Computing, one finds that $\widehat{\mathrm{MaxG}}(D)=1<2=\mathrm{MaxG}(D)$. 
    \begin{figure}[H]
        \centering
        $$\scalebox{0.8}{\begin{tikzpicture}[scale=0.65]
  \node at (1.5, 2.5) {$\cdot$};
  \node at (0.5, 1.5) {$\cdot$};
  \node at (1.5, 1.5) {$\cdot$};
  \draw (0,4)--(0,0)--(3,0);
  \draw (0,1)--(2,1)--(2,3)--(1,3)--(1,2)--(0,2);
  \draw (1,1)--(1,2)--(2,2);
\end{tikzpicture}}$$
        \caption{Diagram for which greedy approach fails}
        \label{fig:greedyfail}
    \end{figure}
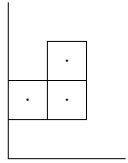
\end{example}

To end this section, we show how in the greedy case one can often simplify computations by reducing to a smaller diagram. In particular, for any diagram $D$, we define a reduction $f_D(D)\subseteq D$ for which $\widehat{\mathrm{MaxG}}(D)=\widehat{\mathrm{MaxG}}(f_D(D))$. It would be interesting to consider whether a similar reduction could be defined corresponding to $\mathrm{MaxG}(D)$.

\begin{definition}[Reduction Function]
Given a diagram $D$ that contains no ghost cells, define $$S(D)=\{(r,c)\in D~|~(r,c)\notin R(D)~\text{and}~ (r^*,c)\notin R(D)~\forall~r^*>r\}$$ and $f_D(\tilde{D})=\tilde{D}\backslash S(D)$ for $\tilde{D}\in GKD(D)$. We refer to $f_D(\tilde{D})$ as the \textbf{reduction} of $\tilde{D}$ in $GKD(D)$.
\end{definition}

\begin{example}
    Letting $D$ be the diagram of Figure~\ref{fig:Fex} \textup{(a)}, the diagram $f_D(D)$ is illustrated in Figure~\ref{fig:red}.
    \begin{figure}[H]
        \centering
        $$\scalebox{0.8}{\begin{tikzpicture}[scale=0.65]
        \node at (3.5, 1.5) {$\cdot$};
        
        \node at (2.5, 2.5) {$\cdot$};
        \node at (7.5, 2.5) {$\cdot$};

        \node at (3.5, 3.5) {$\cdot$};
        \node at (4.5, 3.5) {$\cdot$};

        \node at (2.5, 4.5) {$\cdot$};
        \node at (3.5, 4.5) {$\cdot$};

        \node at (7.5, 5.5) {$\cdot$};

        \node at (2.5, 6.5) {$\cdot$};
        
        \draw (0,7.5)--(0,0)--(8.5,0);
        
        \draw (3,1)--(4,1)--(4,2)--(3,2)--(3,1);
        
        \draw (2,2)--(3,2)--(3,3)--(2,3)--(2,2);
        \draw (7,2)--(8,2)--(8,3)--(7,3)--(7,2);

        \draw (3,3)--(5,3)--(5,4)--(3,4)--(3,3);
        \draw (4,3)--(4,4);

        \draw (2,4)--(4,4)--(4,5)--(2,5)--(2,4);
        \draw (3,4)--(3,5);

        \draw (7,5)--(8,5)--(8,6)--(7,6)--(7,5);

        \draw (2,6)--(3,6)--(3,7)--(2,7)--(2,6);
\end{tikzpicture}}$$
        \caption{$f_D(D)$}
        \label{fig:red}
    \end{figure}
\end{example}

Our main result concerning $f_D$ is as follows.

\begin{theorem}\label{thm:red}
    If $D$ is a diagram that contains no ghost cells, then $f_D:GKD(D)\to GKD(f_D(D))$ is a bijection which preserves ghost cells. Consequently, $\widehat{\mathrm{MaxG}}(D)=\widehat{\mathrm{MaxG}}(f_D(D))$.
\end{theorem}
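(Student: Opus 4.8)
The plan is to prove the three asserted properties of $f_D$ in turn, with everything resting on a single structural fact: the deleted set $S(D)$ is the \emph{same} for every diagram of $GKD(D)$, and its cells are completely inert under ghost moves. First I would reformulate $S(D)$ column by column. Writing $\rho(c)=\max\{r~|~(r,c)\in R(D)\}$ for the highest rightmost row in a nonempty column $c$ (with $\rho(c)=0$ when column $c$ has no rightmost cell), a short check against the definition shows that $(r,c)\in S(D)$ if and only if $(r,c)\in D$ and $r>\rho(c)$; that is, in each column $S(D)$ is exactly the set of cells lying strictly above the topmost rightmost cell.

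The key lemma I would then establish is that these cells are \textbf{frozen}: for every $\tilde D\in GKD(D)$ and every $(r,c)\in S(D)$, the position $(r,c)$ of $\tilde D$ is occupied by the original non-ghost cell, and $(r,c)$ is never the rightmost cell of its row, hence never the target of a ghost move. The mechanism is that, because only ghost moves are applied, whenever the rightmost cell of a row is moved it leaves a ghost cell in its original (rightmost) position, so that position stays permanently occupied; consequently any cell that is not rightmost in $D$ can never become rightmost anywhere in $GKD(D)$, and in particular never moves and never becomes a ghost. The second clause in the definition of $S(D)$ — that no cell strictly above $(r,c)$ in column $c$ is rightmost — then guarantees that the whole portion of column $c$ in rows $\ge r$ is frozen, so deleting $(r,c)$ opens no new moves for cells above it and, since nothing ever descends into $(r,c)$, affects no cell below it.

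With the lemma in hand, the heart of the argument is a commutation identity: for each $\tilde D\in GKD(D)$ and each row $r$, a ghost move at row $r$ is applicable to $\tilde D$ if and only if it is applicable to $\tilde D\setminus S(D)$, and in that case $\mathcal{G}(\tilde D,r)\setminus S(D)=\mathcal{G}(\tilde D\setminus S(D),r)$. Two things need checking. First, the rightmost cell of row $r$ is unchanged by deleting $S(D)$, since every cell of $S(D)$ is non-rightmost and lies strictly to the left of the genuine rightmost cell. Second, the landing position of the moving cell is unchanged: because the active (non-frozen) cells of any column remain weakly below row $\rho(c)$ while the cells of $S(D)$ sit strictly above $\rho(c)$, the moving cell occupies a row $\le\rho(c)$, so every position below it in its column is identical in $\tilde D$ and in $\tilde D\setminus S(D)$, whence the highest empty position below is the same. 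The degenerate cases of the $K$-Kohnert rule (no empty position below, or an intervening ghost cell) depend only on the rows below the moving cell and so are preserved as well.

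Finally I would assemble the conclusion. Inducting on the length of a ghost-move sequence and using the commutation identity in the forward direction shows $f_D$ is a well-defined map $GKD(D)\to GKD(f_D(D))$; using it in the reverse direction lifts any sequence realizing a diagram of $GKD(f_D(D))$ back to a sequence on $D$, giving surjectivity. Injectivity is immediate, since the frozen lemma yields $S(D)\subseteq\tilde D$ for every $\tilde D\in GKD(D)$, so $\tilde D=f_D(\tilde D)\sqcup S(D)$ and $\tilde D$ is recovered from its image. As $S(D)$ contains only non-ghost cells, $G(\tilde D)=G(f_D(\tilde D))$, so $f_D$ preserves ghost cells, and taking maxima over the two collections gives $\widehat{\mathrm{MaxG}}(D)=\widehat{\mathrm{MaxG}}(f_D(D))$. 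I expect the main obstacle to be the landing-position half of the commutation identity — precisely the claim that active cells never descend from above the topmost rightmost row of a column while $S(D)$ cells never leave it — since it is exactly here that both clauses defining $S(D)$ and the ghost-only restriction are genuinely needed.
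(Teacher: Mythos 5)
Your proposal is correct and follows essentially the same route as the paper: your frozen-cells lemma is the paper's Lemma~\ref{lem:block} together with Proposition~\ref{prop:comm}(a)--(b), your commutation identity $\mathcal{G}(\tilde D\setminus S(D),r)=\mathcal{G}(\tilde D,r)\setminus S(D)$ is Proposition~\ref{prop:comm}(c), and the assembly (induction for well-definedness, lifting for surjectivity, recovery of $\tilde D$ as $f_D(\tilde D)\cup S(D)$ for injectivity, ghost preservation since $S(D)$ has no ghost cells) matches the paper's proof of Theorem~\ref{thm:red}. The only cosmetic difference is your column-wise reformulation of $S(D)$ via $\rho(c)$, which repackages the paper's part (b) but does not change the argument.
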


To prove Theorem~\ref{thm:red}, we require the following proposition. Proposition~\ref{prop:comm} (a) and (b) concern properties of the set $S(D)$ of cells removed by $f_D$, while Proposition~\ref{prop:comm} (c) shows that $f_D$ commutes with the application ghost moves.

\begin{prop}\label{prop:comm}
    Let $D$ be a diagram that contains no ghost cells and $T\in GKD(D)$.
    \begin{enumerate}
        \item[$(a)$] $S(D)\subset T$.
        \item[$(b)$] If $(\widehat{r},c)\in R(T)$ and $(\tilde{r},c)\in T$ with $\tilde{r}\le \widehat{r}$, then $(\tilde{r},c)\notin S(D)$.
        \item[$(c)$] For $r>0$, we have that $\mathcal{G}(f_D(T),r)=f_D(\mathcal{G}(T,r))$, i.e., $f_D$ commutes with the application of ghost moves.
    \end{enumerate}
\end{prop}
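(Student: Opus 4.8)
The plan rests on a single structural observation about ghost moves, which I would isolate first. Since only ghost moves are used to build $GKD(D)$, and a ghost move relocates the rightmost cell of a row while depositing a permanent ghost in the vacated position, every position occupied in $D$ stays occupied — by a ghost or a non-ghost cell — in each $T\in GKD(D)$; call this $(\dagger)$ (this is the content behind Lemma~\ref{lem:block}). From $(\dagger)$ I would extract the key fact $(\star)$: if $(r,c)\in D\setminus R(D)$, then in every $T\in GKD(D)$ the cell $(r,c)$ is still a non-ghost cell and is never rightmost in its row. Indeed, $(r,c)\notin R(D)$ yields some $(r,c')\in D$ with $c'>c$; by $(\dagger)$ position $(r,c')$ is always occupied, so $(r,c)$ is never the rightmost cell of its row, and since a ghost move only ever moves a rightmost cell, $(r,c)$ never moves. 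Part $(a)$ is then immediate: $S(D)\subseteq D\setminus R(D)$, so by $(\star)$ every cell of $S(D)$ survives in each $T\in GKD(D)$, i.e. $S(D)\subset T$.

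For $(b)$ I would argue by contradiction. Suppose $(\widehat r,c)\in R(T)$, $(\tilde r,c)\in T$ with $\tilde r\le\widehat r$, and $(\tilde r,c)\in S(D)$; this last assumption forces $(\tilde r,c)\in D\setminus R(D)$ and $(r^*,c)\notin R(D)$ for every $r^*>\tilde r$. Let $x$ be the non-ghost cell occupying $(\widehat r,c)$ in $T$ and let $(r_0,c)\in D$ be its original position, so $r_0\ge\widehat r$ since cells only descend. If $x$ has moved, then $r_0>\widehat r\ge\tilde r$; moreover the first time $x$ moved it was rightmost while still at $(r_0,c)$, so by $(\star)$ we must have $(r_0,c)\in R(D)$, contradicting $(r^*,c)\notin R(D)$ for $r^*>\tilde r$. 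If $x$ has not moved, then $r_0=\widehat r$ and $x=(\widehat r,c)\in R(T)$ is rightmost in $T$; by the contrapositive of $(\star)$ this forces $(\widehat r,c)\in R(D)$, contradicting either the condition $(r^*,c)\notin R(D)$ (when $\widehat r>\tilde r$) or the condition $(\tilde r,c)\notin R(D)$ (when $\widehat r=\tilde r$). Either way we reach a contradiction.

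Finally, $(c)$ is a bookkeeping argument powered by $(a)$ and $(b)$. Fix $T\in GKD(D)$ and a row $r$, and write $c^*$ for the column of the rightmost cell of row $r$ in $T$. Since $S(D)$-cells are non-ghost and, by $(\star)$, never rightmost, deleting $S(D)$ leaves the rightmost cell of each row unchanged; in particular $f_D(T)$ and $T$ agree on the rightmost cell of row $r$. If that cell is a non-ghost cell $(r,c^*)\in R(T)$ and the move is non-trivial, then by $(b)$ every cell of $T$ in column $c^*$ in rows $\le r$ lies outside $S(D)$, so $T$ and $f_D(T)$ have identical occupancy in column $c^*$ below row $r$; hence the target $(\widehat r,c^*)$ of the move and all three triviality/blocking conditions agree for $T$ and $f_D(T)$. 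Since $(r,c^*)\notin S(D)$ (by $(b)$ with $\tilde r=\widehat r=r$) and $(\widehat r,c^*)\notin S(D)$ (as $(\widehat r,c^*)$ is empty in $T$ while $S(D)\subseteq T$ by $(a)$), deleting $S(D)$ commutes with performing the move, giving $\mathcal{G}(f_D(T),r)=f_D(\mathcal{G}(T,r))$; the trivial case is identical since none of the relevant data changes under deletion of $S(D)$. I expect the main obstacle to be exactly this last point — confirming that removing $S(D)$ alters neither the highest empty position below $(r,c^*)$ nor the ghost-blocking condition — which is precisely what part $(b)$ is engineered to guarantee.
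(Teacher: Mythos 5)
Your proposal is correct and takes essentially the same route as the paper: your persistence facts $(\dagger)$ and $(\star)$ are exactly the content of Lemma~\ref{lem:block} (positions of $D$ stay occupied under ghost moves, so cells of $D\setminus R(D)$ are frozen and never rightmost), part (a) follows from this just as in the paper, and your part (c) is the same bookkeeping, using (a) and (b) to check that the rightmost cell of row $r$, the occupancy of column $c^*$ weakly below it, and hence the target and triviality conditions are unchanged by deleting $S(D)$. The only (minor) variation is in (b): you track the non-ghost occupant of $(\widehat{r},c)$ back to its origin $(r_0,c)\in D$ and conclude $(r_0,c)\in R(D)$ with $r_0\ge\tilde{r}$, whereas the paper assumes no such rightmost cell of $D$ exists in column $c$ weakly above $\tilde{r}$ and derives that the entire column segment is frozen so that no cell of $R(T)$ can appear there — equivalent contradictions resting on the same lemma, with yours implicitly invoking the cell-tracking that the paper makes explicit via labelings elsewhere.
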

\begin{proof}
    (a) Take $(r,c)\in S(D)$. Considering the definition of $S(D)$, there exists $c^*>c$ such that $(r,c^*)\in D$. Thus, applying Lemma~\ref{lem:block}, it follows that $(r,c)\in T$ for all $T\in GKD(D)$, as desired.

    (b) Considering the definition of $S(D)$, it suffices to show that there exists $r^*\ge \tilde{r}$ for which $(r^*,c)\in R(D)$. Assume otherwise. Let $\{r_i\}_{i=1}^n=\{r~|~(r,c)\in D,~r\ge \tilde{r}\}$; note that since $(\tilde{r},c),(\widehat{r},c)\in T$ with $\widehat{r}\ge \tilde{r}$ and $T\in GKD(D)$, it follows that $n>1$. Then for $i\in [n]$, there exists $c_i>c$ such that $(r_i,c_i)\in R(D)$. Consequently, applying Lemma~\ref{lem:block}, for all $\tilde{D}\in GKD(D)$, we have $\{r~|~(r,c)\in \tilde{D},~r\ge \tilde{r}\}=\{r_i\}_{i=1}^n$ and there exists no $r^*\ge \tilde{r}$ satisfying $(r^*,c)\in R(\tilde{D})$, which is a contradiction.

    (c) Evidently, the result holds if $\mathcal{G}(T,r)=T$, so assume that $\mathcal{G}(T,r)=T\ldownarrow^{(r,c)}_{(\widehat{r},c)}\cup\{\langle r,c\rangle\}$. Since $S(D)\subset \tilde{D}$ for all $\tilde{D}\in GKD(D)$, it follows that $(r,c),(\widehat{r},c)\notin S(D)$. Consequently, $$f_D(\mathcal{G}(T,r))=(f_D(T)\backslash\{(r,c)\})\cup \{(\widehat{r},c),\langle r,c\rangle\}.$$ Now, since $(r,c)\in R(T)$ and $(r,c)\notin S(D)$, it follows that $(r,c)\in R(f_D(T))$. Moreover, applying part (b), we have that $(\tilde{r},c)\notin S(D)$ for all $\tilde{r}<r$ such that $(\tilde{r},c)\in T$. Thus, $(r,c)\in R(f_D(T))$, $(\tilde{r},c)\in f_D(T)$ for $\widehat{r}<\tilde{r}< r$, and $(\widehat{r},c)\notin f_D(T)$ so that $$\mathcal{G}(f_D(T),r)=f_D(T)\ldownarrow^{(r,c)}_{(\widehat{r},c)}\cup\{\langle r,c\rangle\}=(f_D(T)\backslash\{(r,c)\})\cup \{(\widehat{r},c),\langle r,c\rangle\}=f_D(\mathcal{G}(T,r)),$$ as desired.
\end{proof}

We are now in a position to prove Theorem~\ref{thm:red}.

\begin{proof}[Proof of Theorem~\ref{thm:red}]
First, we show that if $T\in GKD(D)$, then $f_D(T)\in GKD(f_D(D))$. Evidently, $f_D(D)\in GKD(f_D(D))$. Take $T\in GKD(D)$ such that $T\neq D$. By definition, there exists a sequence of rows $\{r_i\}_{i=1}^n$ such that if $D_0=D$ and $D_i=\mathcal{G}(D_{i-1},r_i)$ for $i\in [n]$, then $D_n=T$. Applying Proposition~\ref{prop:comm} (c), we have that $$f_D(D_i)=f_D(\mathcal{G}(D_{i-1},r_i))=\mathcal{G}(f_D(D_{i-1}),r_i)$$ for $i\in[n]$. Thus, since $f_D(D_0)=f_D(D)$, there exists a sequence of ghost moves which can be applied to form $f_D(D_n)=f_D(T)$ from $f_D(D)$, i.e., $f_D(T)\in GKD(f_D(D))$. Consequently, $f_D:GKD(D)\to GKD(f_D(D))$.

Now, since $S(D)\subset \tilde{D}$ for all $\tilde{D}\in GKD(D)$ and $S(D)$ contains no ghost cells, it follows that $G(\tilde{D})=G(\tilde{D}\backslash S(D))=G(f_D(\tilde{D}))$ for all $\tilde{D}\in GKD(D)$, i.e., $f_D$ is ghost cell preserving. To see that $f_D$ is one-to-one, take $D_1, D_2 \in GKD(D)$ such that $f_D(D_1) = f_D(D_2)$. Then $D_1 \backslash S(D) = D_2 \backslash S(D)$. Now, since $S(D) \subset D_1,D_2$, we have that
\begin{align*}
    D_1 & = (D_1 \backslash S(D)) \cup S(D)\\
    & = f_D(D_1) \cup S(D)\\
    & = f_D(D_2) \cup S(D)\\
    & = (D_2 \backslash S(D)) \cup S(D)\\
    & = D_2;
\end{align*}
that is, $D_1=D_2$ so that $f_D$ is one-to-one. Finally, it remains to show that $f_D$ is onto. For a contradiction, assume that there exists $T \in GKD(f_D(D))$ which is not in the image of $f_D$. Since $f_D(D)$ is in the image of $f_D$, it follows that there must diagrams $T_1,T_2\in GKD(D)$ such that $T_1$ is in the image of $f_D$, $T_2=\mathcal{G}(T_1,r)$ for some $r>1$, and $T_2$ is not in the image of $f_D$. Assume that $\widehat{T}_1\in GKD(D)$ satisfies $f_D(\widehat{T}_1)=T_1$. Then applying Proposition~\ref{prop:comm} (c), we have that $$T_2=\mathcal{G}(T_1,r)=\mathcal{G}(f_D(\widehat{T}_1),r)=f_D(\mathcal{G}(\widehat{T}_1,r));$$ but this implies that $f_D$ maps $\mathcal{G}(\widehat{T}_1,r)$ to $T_2$, a contradiction. Thus, $f_D$ is onto and the result follows.
\end{proof}

\section{Future Work}\label{sec:epi}

In this paper, we study the combinatorial puzzle that arises from the definition of Lascoux polynomials in terms of diagrams and $K$-Kohnert moves \textbf{\cite{Pan1,Pan2,KKohnert}}. Given an arbitrary diagram $D$, the object of this puzzle is to apply a sequence of $K$-Kohnert moves to create a diagram $T$ from $D$ which contains the maximum possible number of ghost cells. Our goal here was to establish means by which one can determine if they have solved such a puzzle; that is, means of computing the number of ghost cells contained in a solution to the puzzle associated with the diagram $D$, denoted $\mathrm{MaxG}(D)$. As noted earlier, we are not the first to consider this question. In \textbf{\cite{Pan2}}, for a key diagram $D$, the authors establish means of computing $\mathrm{MaxG}(D)$ and provide a sequence of $K$-Kohnert moves which, when applied to $D$, produce a diagram $T\in KKD(D)$ containing $\mathrm{MaxG}(D)$ many ghost cells, i.e., a solution to the associated puzzle. Here, we show that the method for computing $\mathrm{MaxG}(D)$ when $D$ is a key diagram introduced in \textbf{\cite{Pan2}} can be applied either directly or with slight modification to a larger class of diagrams, including the skew diagrams and certain lock diagrams of \textbf{\cite{KP1}}. In addition, we consider the limits of taking a greedy approach to such puzzles, establishing means of computing the value analogous to $\mathrm{MaxG}(D)$ when proceeding greedily.

Aside from continuing to extend the results here and in \textbf{\cite{Pan2}}, with the goal of introducing means of computing $\mathrm{MaxG}(D)$ and solving this puzzle for arbitrary diagrams $D$, there exist other interesting directions for future work. For example, one could construct and study 2-person games arising from the puzzle. Two possible ways for extending the puzzle to a 2-player game are
\begin{itemize}
    \item[(1)] having players alternate making $K$-Kohnert moves under normal (resp. mis\'ere) play; or
    \item[(2)] having players alternate making $K$-Kohnert moves while keeping track of the number of ghost cells created on their turns, the player having created the most ghost cells when no moves are left being the winner.
\end{itemize}
For another possible direction of future work, one could replace the collection of $K$-Kohnert moves used here with the newly introduced set of moves in \textbf{\cite{Groth}}. In \textbf{\cite{Groth}} it is conjectured that the new moves introduced within allow for a definition of Grothendieck polynomials analogous to that described here for Lascoux polynomails. 


\printbibliography

\end{document}